\DeclareMathOperator{\ann}{ann}%
\newcommand{\F}{\mathbb{F}}
\newcommand{\Fp}{\F_p}
\newcommand{\Gal}{\text{\rm Gal}}
\newcommand{\N}{\mathbb{N}}
\newcommand{\Z}{\mathbb{Z}}
\newcommand{\comment}[1]{}
\begin{document}

\keywords{Galois module, Kummer theory, cyclic extension, higher power classes, embedding problem}
\subjclass[2010]{Primary 12F10; Secondary 16D70}

\title[Arithmetic encoded in the module structure of $K^{\times}/K^{\times p^m}$]{Arithmetic properties encoded in the Galois module structure of $K^\times/K^{\times p^m}$}

\thanks{The first author is partially supported by the Natural Sciences and Engineering Research Council of Canada grant R0370A01.  He also gratefully acknowledges the Faculty of Science Distinguished Research Professorship, Western Science, in years 2004/2005 and 2020/2021. The second author is partially supported by 2017--2019 Wellesley College Faculty Awards. The third author was supported  in part by National Security Agency grant MDA904-02-1-0061.}

\author[J\'{a}n Min\'{a}\v{c}]{J\'{a}n Min\'{a}\v{c}}
\address{Department of Mathematics, Western University, London, Ontario, Canada N6A 5B7}
\email{minac@uwo.ca}

\author[Andrew Schultz]{Andrew Schultz}
\address{Department of Mathematics, Wellesley College, 106 Central Street, Wellesley, MA \ 02481 \ USA}
\email{andrew.c.schultz@gmail.com}

\author[John Swallow]{John Swallow}
\address{Office of the President, Carthage College, 2001 Alford Park Drive, Kenosha, WI \ 53140 \ USA}
\email{jswallow@carthage.edu}

\begin{abstract}
The power classes of a field are well-known for their ability to parameterize elementary $p$-abelian Galois extensions.  These classical objects have recently been reexamined through the lens of their Galois module structure.  Module decompositions have been computed in several cases, providing deep new insight into absolute Galois groups.  The surprising result in each case is that there are far fewer isomorphism types of indecomposables than one would expect generically, with summands predominately free over associated quotient rings.  Though non-free summands are the exception both in their form and prevalence, they play the critical role in controlling arithmetic conditions in the field which allow the rest of the decomposition to be so simple.  

Suppose $m,n \in \mathbb{N}$ and $p$ is prime.  In a recent paper, a surprising and elegant decomposition for $p^m$th power classes has been computed when the underlying Galois group is a cyclic group of order $p^n$.  As with previous module decompositions, at most one non-free summand appears.  Outside of a particular special case when $p=2$, the structure of this exceptional summand was determined by a vector $\mathbf{a}\in \{-\infty,0,\dots,n\}^m$ and a natural number $d$.  In this paper we give field-theoretic interpretations for $\mathbf{a}$ and $d$, showing they are related to the solvability of a family of Galois embedding problems and the cyclotomic character associated to $K/F$.
%
%
\end{abstract}

\date{\today}

\maketitle

\newtheorem*{theorem*}{Theorem}
\newtheorem*{lemma*}{Lemma}
\newtheorem{theorem}{Theorem}
\newtheorem{proposition}{Proposition}[section]
\newtheorem{corollary}[proposition]{Corollary}
\newtheorem{lemma}[proposition]{Lemma}
\numberwithin{equation}{section}

\theoremstyle{definition}
\newtheorem*{definition*}{Definition}
\newtheorem*{remark*}{Remark}

\parskip=10pt plus 2pt minus 2pt


\section{Introduction}

\subsection{Motivation}

The foundational problem in modern Galois theory is the \emph{inverse Galois problem}: for a given field $F$, what groups appear as the Galois group of some extension of $F$?  If $\widetilde{G}$ is a group, we say that $\widetilde{G}$ is realizable over $F$ if there exists some Galois extension $L/F$ so that $\Gal(L/F) \simeq \widetilde{G}$.  Such an extension $L/F$ is sometimes called a $\widetilde{G}$-extension of $F$. The inverse Galois problem for $F$, therefore, asks for a way to distinguish those groups which are realizable over $F$ from those which are not.  This problem is extraordinarily difficult in general.  For example, the inverse Galois problem for $\mathbb{Q}$ is the subject of intense interest.

There are a number of methodologies one could use when trying to realize a group $\widetilde{G}$ over $F$, but perhaps one of the more intuitive techniques is to review what the fundamental theorem of Galois theory gives us to work with when we already have a $\widetilde{G}$-extension of $F$ in hand.  So suppose $L/F$ is an extension with $\widetilde{G} =  \Gal(L/F)$, and suppose further that $N$ is a subgroup of $\widetilde{G}$.  By Galois theory we know that $N$ corresponds to an intermediate field $K$, and that $\Gal(L/K) = N$.  If $N$ is additionally a normal subgroup, then the extension $K/F$ is Galois, and indeed we get that $\Gal(K/F) \simeq \Gal(L/F)/\Gal(L/K)$ in a natural way.  With this in mind, one might attempt to build a $\widetilde{G}$-extension of $F$ iteratively.  First one identifies a quotient $G$ of $\widetilde{G}$ (coming from a normal subgroup $N$) and finds (or is given) an extension $K/F$ with $\Gal(K/F) \simeq G$.  Then one finds an extension $L/K$ with $\Gal(L/K) \simeq N$.  The hope is that the overall extension $L/F$ provides a suitable ``gluing" of the quotient group and factor group so that $\Gal(L/F) \simeq \widetilde{G}$.  Of course, the idea sketched here can easily run into problems. Notably, the extension $L/F$ might fail to be Galois!  Even if it is Galois, however, the group $\Gal(L/F)$ might be some extension of $N$ by $G$ which is different from $\widetilde{G}$.

This general approach motivates the definition of the Galois embedding problem.  For a field $F$ and groups $\widetilde{G}$ and $G$, suppose we have an extension $K/F$ and an isomorphism $\psi:\Gal(K/F) \xrightarrow{\sim} G$, as well as a surjection $\pi: \widetilde{G} \twoheadrightarrow G$.  The Galois embedding problem for $(\widetilde{G},\psi,\pi)$ over $K/F$ asks whether there is a Galois extension $L/F$ containing $K$ and an isomorphism $\Psi: \Gal(L/F) \xrightarrow{\sim} \widetilde{G}$ so that the natural surjection from Galois theory makes the following diagram commute:
$$\xymatrix{
\Gal(L/F) \ar@{->>}[r] \ar[d]^{\Psi}& \Gal(K/F)\ar[d]^{\psi}\\
\widetilde{G} \ar@{->>}[r] & G
}.$$
For the sake of convenience, we will often refer to the embedding problem $(\widetilde{G},\psi,\pi)$ over $K/F$ as the embedding problem $\widetilde{G} \twoheadrightarrow G$ over $K/F$.  Observe that this is a generalization of the search for $\widetilde{G}$-extensions of $F$, since a $\widetilde{G}$-extension of $F$ is precisely a solution to the embedding problem $\widetilde{G} \twoheadrightarrow \{\text{id}\}$ over $F/F$.

The benefit of studying Galois embedding problems is that they give a natural way to look for complicated Galois extensions of a field using relatively simple building blocks. For example, for a given prime $p$, the $\mathbb{Z}/p\mathbb{Z}$-extensions of a field $K$ have been carefully studied, and in all cases there is a known family $J(K)$ which parameterizes all such extensions.  The most familiar situation is when $K$ contains a primitive $p$th root of unity (so that, in particular, $K$ is not characteristic $p$). In this case, Kummer theory tells us that the $1$-dimensional $\mathbb{F}_p$-subspaces of $J(K):=K^\times/K^{\times p}$ are in correspondence with the $\mathbb{Z}/p\mathbb{Z}$-extensions of $K$.  More generally, elementary $p$-abelian extensions of rank $k$ --- i.e., those for whom the Galois group over $K$ is isomorphic to $\mathbb{Z}/p\mathbb{Z}^{\oplus k}$ --- correspond to $k$-dimensional subspaces of $J(K)$. Since the Galois-theoretic appearance of this family of groups is so well understood, it therefore makes sense to look for embedding problems that involve them. 

Indeed, such embedding problems have been carefully studied.  Continue to assume that $K/F$ is a Galois extension with $\Gal(K/F) \simeq G$, and that we have a surjection $\pi:\widetilde{G} \twoheadrightarrow G$.  Suppose further that $F$ has a primitive $p$th root of unity, and that $N = \ker(\pi)$ is a subgroup of order $p$ within $Z(\widetilde{G})$.  Finally, assume that $\widetilde{G}$ is not the split extension of $G$ by $N$.  Then the solvability of the embedding problem $\widetilde{G} \twoheadrightarrow G$ over $K/F$ is encoded in the value of a particular element in the second cohomology group of $G$ with coefficients in $K^\times$.  More specifically, since $\widetilde{G}$ is an extension of $G$ by $N$, it is represented by some cohomology class $c \in H^2(G,N)$.  By identifying $N$ with the group of $p$th roots of unity $\mu_p \in K^\times$ (which have identical $G$-actions since $\mu_p \subseteq F$) and using the map $H^2(G,\mu_p) \to H^2(G,K^\times)$ induced by inclusion, one then gets a class in $H^2(G,K^\times)$.  The solvability of $\widetilde{G} \twoheadrightarrow G$ over $K/F$ is detected by the triviality of this element.  See \cite{Ledet}.  

This work can be traced back to Dedekind's investigations into the embedding problem $Q_8 \twoheadrightarrow \mathbb{Z}/2\mathbb{Z} \oplus \mathbb{Z}/2\mathbb{Z}$ from \cite{Dedekind}. The interested reader can consult \cite{DameyMartinet,DameyPayan,GrundmanSmith1,GrundmanSmith2,GrundmanSmith3,GrundmanSmithSwallow,Michailov2,Michailov4,Michailov5} for the use of embedding problems to study the appearance of small $2$-groups as Galois groups, as well as \cite{MassyNguyen,Michailov1,Michailov3,Swallow} for analogous results for small $p$-groups when $p$ is odd.

In addition to cohomological techniques, the solvability of certain embedding problems can be interpreted in terms of arithmetic conditions on the base extension.  In \cite{Al}, for example, Albert proved that if $F$ contains a primitive $p$th root of unity $\xi_p$ and $\Gal(K/F) \simeq \mathbb{Z}/p^n\mathbb{Z}$, then the embedding problem $\mathbb{Z}/p^{n+1}\mathbb{Z} \twoheadrightarrow \mathbb{Z}/p^n\mathbb{Z}$ over $K/F$ is solvable if and only if $\xi_p \in N_{K/F}(K^\times)$.  More generally, a similar criterion exists to determine the solvability of the embedding problem $\mathbb{Z}/p^{n+i}\mathbb{Z} \twoheadrightarrow \mathbb{Z}/p^n\mathbb{Z}$ over $K/F$ for $i \geq 1$, at least in the presence of appropriate roots of unity.  Specifically, if $F$ contains a primitive $p^i$th root of unity $\xi_{p^i}$, then the embedding problem $\mathbb{Z}/p^{n+i}\mathbb{Z} \twoheadrightarrow \mathbb{Z}/p^n\mathbb{Z}$ over $K/F$ is solvable if and only if $\xi_{p^i} \in N_{K/F}(K^\times)$. An excellent discussion of this result (using tools from cohomology) --- as well as a number of other interesting topics concerning these types of embedding problems --- is presented in \cite{ArasonFeinSchacherSonn}.   For another excellent reference on embedding problems of this particular form, see \cite{FeinSaltmanSchacher}

It is worth observing that the study of embedding problems of the form $\mathbb{Z}/p^{n+i}\mathbb{Z} \twoheadrightarrow \mathbb{Z}/p^n\mathbb{Z}$ harkens to the investigation of $\mathbb{Z}_p$-extensions of a given ground field.  The interested reader can consult \cite{BertrandiasPayan,Whaples} for more on these types of extensions.  Naturally, this also brings to mind the subject of Iwasawa theory.  An excellent overview of Iwasawa theory is available in \cite{SharifiNotices}, though the reader could consult \cite{CoatesFukayaKatoSujatha,CoatesSchneiderSujatha,CoatesSujathaWintenberger,IwasawaAnnals,Kato,SujathaEulerPoincare} for a sampling of both the history and evolution of this wide-ranging and powerful topic.


Recently, additional strategies have been employed in solving embedding problems.  Massey products have been shown to have strong connections to the solvability of particular embedding problems (see, \cite{MT-Advances,MT-JAlg}), as well as to properties of absolute Galois groups (see \cite{EfratMatzri,MPQT,MT-JEMS}).  More information about Massey products and their connection to Galois theory can be found in \cite{HarpazWittenberg,LamLiuSharifiWakeWang,SharifiCrelle}.

Another recent approach to studying embedding problems comes from Galois modules, particularly when the embedding problem concerns a (group-theoretic) extension of a cyclic $p$-group by an elementary $p$-abelian group.  Let $K/F$ be an extension with $\Gal(K/F) \simeq \mathbb{Z}/p^n\mathbb{Z} =: G$, and let $J(K)$ be the $\mathbb{F}_p$-space whose $k$-dimensional subspaces parameterize rank $k$ elementary $p$-abelian extensions of $K$.  (Again, if $\xi_p \in K$ then this means $J(K) = K^\times/K^{\times p}$.) If $L/K$ is an elementary $p$-abelian extension corresponding to some $\mathbb{F}_p$-subspace $M \subseteq J(K)$, then it is easy to show that $L/F$ is Galois if and only if $M$ is an $\mathbb{F}_p[G]$-submodule.  In fact, when $M$ is a cyclic $\mathbb{F}_p[G]$-submodule generated by some element $m \in J(K)$, Waterhouse showed in \cite{Waterhouse} how to compute the structure of $\Gal(L/F)$ in terms of $\dim_{\mathbb{F}_p}(M)$ and a field-theoretic invariant --- the so-called \emph{index} --- associated to $m$.  This methodology was extended to non-cyclic modules in \cite{Schultz}, providing a Galois-theoretic interpretation for the solvability of any embedding problem which arises as an extension of a cyclic $p$-group by an elementary $p$-abelian group.  

To unlock the power of the dictionary between embedding problems of this type and Galois modules, one would like a decomposition of $J(K)$ into indecomposable summands which is cognizant of the aforementioned index.  Some (purely module-theoretic) results for local field extensions were already available from Borevi\v{c} and Fadeev in the 1960s (see \cite{Borevic,Faddeev}).  Assuming $\xi_p \in K$, the decomposition for general fields was first determined in the case that $n=1$ (i.e., $G = \mathbb{Z}/p\mathbb{Z}$) by the first and third authors in 2003 (\cite{MS}), and for $n \in \mathbb{N}$ by all three authors in 2006 \cite{MSS}. Decompositions for $J(K)$ when $\xi_p \not\in K$ were determined in \cite{BergSchultz,MSS,Schultz}.  The surprise in the decomposition of these modules is that far fewer isomorphism classes of indecomposable summands appear than one expects for a ``generic" $\mathbb{F}_p[G]$-module.   Specifically, let $G_i$ be the quotient of $G$ of order $p^i$, and let $K_i$ be the intermediate extension of $K/F$ to which $G_i$ corresponds.  Then the $\mathbb{F}_p[G]$-module $J(K)$ decomposes as $$J(K) \simeq X \oplus Y_0 \oplus Y_1 \oplus \cdots \oplus Y_n,$$ where for each $0 \leq i \leq n$ we have that $Y_i$ is a free $\mathbb{F}_p[G_i]$-module, and --- outside one special case\footnote{The $X$ summand is trivial only when $p=2, n=1, \text{char}(K) \neq 2$, and $-1\not\in N_{K/F}(K^\times)$.} --- the summand $X$ (the so-called ``exceptional summand") is a cyclic module of dimension $p^{i(K/F)}+1$ for some $i(K/F) \in \{-\infty,0,1,\cdots,n-1\}$. Whereas a ``generic" $\mathbb{F}_p[G]$-module can have up to $p^n$ many isomorphism types of indecomposable summands, for a given extension $K/F$ we instead see that $J(K)$ has at most $n+2$ isomorphism types of summands.  Across all extensions $K/F$, the total number of isomorphism types of summands which can appear in the decomposition of $J(K)$ is $2n+1$.  

The quantity $i(K/F)$ which determines the dimension of the exceptional summand can be interpreted in a variety of ways.  In particular, when $\xi_p \in K$, it is shown in \cite[Th.~3]{MSS} that $i(K/F) = -\infty$ precisely when $\xi_p \in N_{K/F}(K^\times)$, and that otherwise $i(K/F)$ is the minimal value $i \in \{0,1,\cdots,n-1\}$ so that $\xi_p \in N_{K/K_{i+1}}(K^\times)$.  In light of Albert's characterization of embedding problems discussed above, this is equivalent to saying that $i(K/F) = -\infty$ precisely when there is a solution to the embedding problem $\mathbb{Z}/p^{n+1}\mathbb{Z} \twoheadrightarrow \mathbb{Z}/p^n\mathbb{Z}$ over $K/F$, and otherwise that $i(K/F)$ is the minimal value  $i \in \{0,1,\cdots,n-1\}$ for which the embedding problem $\mathbb{Z}/p^{n-i}\mathbb{Z} \twoheadrightarrow \mathbb{Z}/p^{n-i-1}\mathbb{Z}$ is solvable over $K/K_{i+1}$.   It is quite remarkable that this family of cyclic, $p$-power embedding problems along the tower $K/F$ leaves such a significant fingerprint on the overall  structure of this foundational module $J(K)$.

A wealth of information has been gleaned by combining the computed module decomposition with the dictionary between module-theoretic and Galois-theoretic information.  These include results that allow for enumerations of extensions of a particular type (so-called ``realization multiplicity" results), as well as surprising connections between the appearance of groups as Galois groups over a given field (so-called ``automatic realization" results).  The interested reader can consult \cite{BergSchultz,CMSHp3,Schultz} for the former and \cite{CMSHp3,MSS.auto,MS2,Schultz} for the latter.  (Some broader reading on realization multiplicity results is available in \cite{Jensen2,JensenPrestel1,JensenPrestel2}.  If the reader is interested in broader overview of automatic realizations, they might consult \cite{Jensen1,Jensen2,Jensen3}; the connection to embedding problems is explored in \cite{JensenLedetYui,Ledet}.)

We should also mention that there is a related study of additive Galois modules motivated by Emmy Noether's classical work on normal bases (see \cite{Noether-normalbasis}, also found in \cite{Noether-collected}).  This work repeatedly features in the study of integral Galois module structures.  A few representative sources related to this work can be found in \cite{ByottChildsElder,ByottElder,Childs}.  When we compare this additive analysis with our multiplicative one, a number of techniques are clearly similar.  This relationship is most conspicuous when one has exponential and logarithmic functions available, in which case these structures are explicitly interwoven with each other. Although these explorations are different, some future interaction between these two topics could be fruitful.

Motivated by the fascinating structural properties and deep applications of the Galois modules already computed, there has been considerable work to extend the reach of this methodology.  If we continue to assume that the base extension $K/F$ satisfies $\Gal(K/F) \simeq \mathbb{Z}/p^n\mathbb{Z}$ for some $n \in \mathbb{N}$, there have been several investigations into the Galois module structure of Galois cohomology (see \cite{LMSS,LMS}) and Milnor $K$-groups (see \cite{BLMS}).  More recently, work has begun in cases where the base extension $K/F$ is not a cyclic $p$-group.  In \cite{CMSS} the structure of square power classes is considered when $\Gal(K/F) \simeq \mathbb{Z}/2\mathbb{Z} \oplus \mathbb{Z}/2\mathbb{Z}$ and $\text{char}(K) \neq 2$.  In a forthcoming paper \cite{HMS}, the module structure of $J(K)$ is determined when $\Gal(K/F)$ is any finite $p$-group, under the assumption that the maximal pro-$p$ quotient of the absolute Galois group of $K$ is a free pro-$p$-group.  As with the Galois module decompositions described above, in each of these cases the module decompositions are staggering in their simplicity.  Indeed, in these cases the results are even more spectacular since the modular representation theory is so much more complex than in the case where $\Gal(K/F)$ is a cyclic $p$-group.

\subsection{The main results of this paper}

Yet another recent paper explores the Galois module structure of a more refined version of the parameterizing space of elementary $p$-abelian extensions.  Let $\text{char}(K) \neq p$, and again assume that $\Gal(K/F) \simeq \mathbb{Z}/p^n\mathbb{Z} = \langle \sigma \rangle:=G$ for some $n \in \mathbb{N}$.  As before, for $0 \leq i \leq n$ we will write $G_i$ for the quotient of $G$ of order $p^i$, and let $K_i$ be the intermediate field of $K/F$ to which $G_i$ corresponds.  

In \cite{MSS2b} the three authors compute the Galois module structure of $J_m:=K^\times/K^{\times p^m}$.  Again, the structure is far simpler than one would expect for a generic $\mathbb{Z}/p^m\mathbb{Z}[\mathbb{Z}/p^n\mathbb{Z}]$-module, with all but at most one summand isomorphic to $\mathbb{Z}/p^m\mathbb{Z}[G_i]$ for some $0 \leq i \leq n$.  Assuming that $K$ contains a primitive $p$th root of unity (and some additional mild hypotheses in the case $p=2$), such an ``exceptional summand" exists.  Its isomorphism type is determined by a vector $\mathbf{a} \in \{-\infty,0,\cdots,n\}^m$ and a natural number $d \in \mathbb{N}$.  In this paper we look for field-theoretic explanations for the quantities $\mathbf{a}$ and $d$ from this decomposition, and we will determine the degree to which they are unique. 

Both $\mathbf{a}$ and $d$ arise by investigating so-called \emph{minimal norm pairs}.  For $\mathbf{a} = (a_0,a_1,\cdots,a_{m-1}) \in \{-\infty,0,1,\cdots,n\}^m$ with $a_0<n$ and $d \in 1+p\Z$, we say that $(\mathbf{a},d)$ is a \emph{norm pair of length $m$} if there exist $\alpha,\delta_m \in K^\times$ and $\delta_i \in K_{a_i}^\times$  for each $0 \leq i <m$ which satisfy
\begin{equation}\label{eq:defining.properties.for.exceptionalityIII}
    \begin{split}
        \alpha^{\sigma} &= \alpha^{d} \delta_0 \delta_1^p
        \cdots\delta_{m}^{p^m}\\
        \xi_p &= N_{K/F}(\alpha)^{\frac{d-1}{p}}\cdot
        N_{K_{n-1}/F}(\delta_0) \cdot \left(\prod_{i=1}^{m}
        N_{K/F}(\delta_i)^{p^{i-1}} \right).
    \end{split}
\end{equation}
The quantity $\mathbf{a}$ is called the \emph{norm vector} and the integer $d$ is called the \emph{twist}, and one says that $(\alpha,\{\delta_i\}_{i=0}^{m})$ \emph{represents} $(\mathbf{a},d)$.

One can give a partial ordering on the set of norm pairs of length $m$ by defining $(\mathbf{a},d) \leq (\mathbf{a}',d')$ if either $\mathbf{a} < \mathbf{a}'$ (using the natural lexicographical ordering on vectors), or if $\mathbf{a} = \mathbf{a}'$ and $$\min\{v_p(d'-1),m\} \leq \min\{v_p(d-1),m\}$$ (where $v_p(x)$ denotes the exponent of $p$ in the prime factorization of $x$).  Observe that if $d \equiv \tilde d \pmod{p^m}$, then $(\mathbf{a},d) \leq (\mathbf{a},\tilde d) \leq (\mathbf{a},d)$.



Propositions 7.1 and 7.2 of \cite{MSS2b} assert that if $(\mathbf{a},d)$ is chosen minimally with respect to this ordering (a so-called \emph{minimal norm pair}), and if $(\alpha,\{\delta_i\}_{i=0}^{m})$ represents $(\mathbf{a},d)$, then the ``exceptional summand" in a decomposition of $J_m$ can be chosen as the module generated by the classes in $J_m$ represented by $\alpha, \delta_1,\dots,$ and $\delta_{m-1}$.  Moreover, the relations between the generators of this summand are generated by 
\begin{equation}\label{eq:definition.of.Xadm}
\begin{split}
\alpha^{\sigma-d} &= \prod_{i=0}^{m-1}  \delta_i^{p^i} \pmod{K^{\times p^m}}\\ 
\delta_i^{\sigma^{p^{a_i}}} &= \delta_i \pmod{K^{\times p^m}} \text{ for all }0 \leq i < m.
\end{split}
\end{equation}  (Here and throughout the paper, we take $p^{-\infty}=0$ and $\sigma^{p^{-\infty}}=0$.  Hence if $a_i = -\infty$, then $\delta_i=1$.
)  

It turns out that interpretations for the quantities $\mathbf{a}$ and $d$ depend in significant ways on the $p$-power roots of unity that appear along the tower $K/F$.  For $k \in \mathbb{N}$, let $\mu_{p^k}$ denote the set of $p^k$th power roots of unity within some algebraic closure of $K$.  We will let $\xi_{p^k}$ denote a fixed generator of (the multiplicative group) $\mu_{p^k}$.  If $\mu_{p^k} \subseteq F$ for all $k \in \mathbb{N}$, set $\omega=\infty$.  Otherwise, let $\omega$ be chosen so that $\mu_{p^\omega} \subseteq F$, but $\mu_{p^{\omega+1}} \not\subseteq F$.  In the same way, set $\nu = \infty$ if $\mu_{p^k} \subseteq K$ for all $k \in \mathbb{N}$, and otherwise define $\nu$ so that $\mu_{p^\nu} \subseteq K$ but $\mu_{p^{\nu+1}}\not\subseteq K$.   Because the case $p=2$ and $\omega = 1<\nu$ presents its own idiosyncratic technicalities, we will exclude this case from our investigation. One benefit of excluding this case is that the ``bottom" of the extension $K/F$ can be described as a cyclotomic extension in an easy way: for any $\omega \leq k \leq \nu$ we get $F(\xi_{p^k}) = K_{k-\omega}$.

Our main theorems are as follows.  The first concerns an interpretation of the quantity $d$ from the decomposition.  

\begin{theorem}\label{th:d.is.cyclotomic.character}
Suppose that $\xi_p\in F$.  If $p = 2$ and $n=1$ suppose that $-1 \in N_{K/F}(K^\times)$ as well. Furthermore, assume that if $p=2$ then we are not in the case $\omega =1 < \nu$.  Let $(\mathbf{a},d)$ be a minimal norm pair.
\begin{enumerate}
\item\label{it:d.is.cyclo.character...any.d.works} If $\omega<\nu$ then for any $\tilde d$ satisfying $\tilde d = d \pmod{p^\nu}$, the pair $(\mathbf{a},\tilde d)$ is a minimal norm pair.
\item\label{it:d.is.cyclo.character...omega.equals.nu} If $\omega = \nu$, then $d \equiv 1 \pmod{p^m}$.
\item\label{it:d.is.cyclo.character...d.is.cyclo.character} $d$ is equivalent modulo $p^{\min\{m,\nu\}}$ to the cyclotomic character associated to the extension $K/F$.
\end{enumerate}
\end{theorem}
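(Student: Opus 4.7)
Fix a representation $(\alpha,\{\delta_i\})$ of the minimal norm pair $(\mathbf{a},d)$, and write $c=\chi(\sigma)\in (\Z/p^{\nu}\Z)^{\times}$ for the cyclotomic character value on $\sigma$, so $\sigma(\xi_{p^{k}})=\xi_{p^{k}}^{c}$ for every $k\leq\nu$ and $c\equiv 1\pmod{p^{\omega}}$. The plan for all three parts is to exploit the flexibility of modifying the representation by multiplying $\alpha$ by $p$-power roots of unity living in $K$, and to read off constraints on $d$ from the minimality of the pair together with the norm equation~\eqref{eq:defining.properties.for.exceptionalityIII}.

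For part~\eqref{it:d.is.cyclo.character...d.is.cyclo.character}, I plan to form the modified tuple $(\alpha\,\xi,\{\delta_i'\})$ with $\xi=\xi_{p^{\min(m,\nu)}}$ and compute both sides of the first defining equation. One finds
\[
(\alpha\xi)^{\sigma}(\alpha\xi)^{-d}=\xi^{c-d}\cdot\delta_{0}\delta_{1}^{p}\cdots\delta_{m}^{p^{m}},
\]
so the tuple represents the same pair $(\mathbf{a},d)$ iff the factor $\xi^{c-d}$ can be absorbed into the product $\delta_{0}'(\delta_{1}')^{p}\cdots(\delta_{m}')^{p^{m}}$ with $\delta_{i}'\in K_{a_i}$. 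A root-of-unity order calculation shows $\xi^{c-d}\in K^{\times p^{m}}$ precisely when $v_{p}(c-d)\geq\min(m,\nu)$, and a parallel analysis of smaller absorption slots gives the same bound. If $v_{p}(c-d)<\min(m,\nu)$, the same modification, with $d$ now adjusted by $c-d$, would produce a valid representation of a norm pair strictly smaller than $(\mathbf{a},d)$ in the ordering, contradicting minimality. Hence $d\equiv c\pmod{p^{\min(m,\nu)}}$.

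For part~\eqref{it:d.is.cyclo.character...any.d.works}, when $\omega<\nu$, part~\eqref{it:d.is.cyclo.character...d.is.cyclo.character} already forces $v_{p}(d-1)=v_{p}(c-1)=\omega<\nu$, so any $\tilde d\equiv d\pmod{p^{\nu}}$ automatically satisfies $v_{p}(\tilde d-1)=\omega$ and therefore sits at the same level of the partial order as $d$. It remains to exhibit a representation of $(\mathbf{a},\tilde d)$: I would start from $(\alpha,\{\delta_i\})$ and introduce a correction of the form $\alpha\mapsto\alpha\cdot\gamma^{p^{\omega}}$, where $\gamma\in K^{\times}$ is chosen using Hilbert 90 so that $\gamma^{\sigma-1}$ matches the required shift. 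The condition $\omega<\nu$ ensures that the auxiliary element $\xi_{p^{\omega+1}}\in K\setminus F$ is available to handle the resulting root-of-unity obstruction. For part~\eqref{it:d.is.cyclo.character...omega.equals.nu}, part~\eqref{it:d.is.cyclo.character...d.is.cyclo.character} gives only $d\equiv 1\pmod{p^{\nu}}$, and to strengthen this to $d\equiv 1\pmod{p^{m}}$ I would exhibit $(\mathbf{a},1)$ as a norm pair, which is the bottom element of the order for fixed $\mathbf{a}$. With $d=1$ the norm equation loses its $N_{K/F}(\alpha)$-contribution, and my plan is to absorb the extra $\alpha^{d-1}$ factor from the original representation into a new $\delta_i'$ using the fact that $\alpha^{d-1}=\alpha^{p^{\nu}u}$ for some $u$ and that, when $\omega=\nu$, there are no roots of unity outside $\mu_{p^{\nu}}=\mu_{p^{\omega}}\subseteq F$ to introduce incompatibilities with the norm equation.

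The main obstacle in all three parts is the bookkeeping of the norm equation~\eqref{eq:defining.properties.for.exceptionalityIII} under these modifications. Each replacement of $\alpha$ alters $N_{K/F}(\alpha)^{(d-1)/p}$ by a factor involving $N_{K/F}$ of the modifier, which for a root of unity $\xi_{p^{k}}$ equals $\xi_{p^{k}}^{(c^{p^{n}}-1)/(c-1)}$ and therefore feeds back into the cyclotomic character. I expect the heart of the computation to consist in showing that these cyclotomic norm contributions exactly match the compensating changes in the $N_{K_{a_i}/F}(\delta_i)$-terms, and it is precisely here that the excluded case $p=2,\ \omega=1<\nu$ would cause the root-of-unity arithmetic to fail to align with the norm tower in the expected way.
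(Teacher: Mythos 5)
Your proposal has a genuine gap in the argument for part~(\ref{it:d.is.cyclo.character...d.is.cyclo.character}), and that gap infects the deductions you make from it in parts~(\ref{it:d.is.cyclo.character...any.d.works}) and~(\ref{it:d.is.cyclo.character...omega.equals.nu}).

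The problem is your claim that if $v_p(c-d)<\min(m,\nu)$ then the modified tuple ``would produce a valid representation of a norm pair strictly smaller than $(\mathbf{a},d)$ in the ordering.'' The ordering on norm pairs with the same $\mathbf{a}$ compares only $\min\{v_p(d-1),m\}$, not $d$ itself. In the non-cyclotomic case with $\omega<\nu$, one has $v_p(c-1)=\omega$ exactly (since $\sigma$ moves $\xi_{p^{\omega+1}}$), and Lemma~\ref{le:dinuomega} already gives $v_p(d-1)=\omega$ for any minimal norm pair. Thus $(\mathbf{a},c)$ and $(\mathbf{a},d)$ sit at the \emph{same} level of the partial order; exhibiting one as a norm pair cannot contradict the minimality of the other. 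Minimality simply does not distinguish between two twists with the same valuation of $d-1$, so no argument of this shape can conclude $d\equiv c$. The paper instead proves part~(\ref{it:d.is.cyclo.character...d.is.cyclo.character}) by taking a $p^{\nu-1}$-th root of the second equation in~\eqref{eq:defining.properties.for.exceptionalityIII} (rewritten in terms of $P$, $Q_d$, $T_d$), applying $\sigma-d$ to both sides, and showing by a polynomial identity that the right side collapses to $0$; one then reads off $(\sigma-d)\xi_{p^\nu}=1$ directly. That computation relies on the valuation bounds $a_i\le n+i-\nu$ of Proposition~\ref{pr:aiversusnu}, which your sketch does not invoke.

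For parts~(\ref{it:d.is.cyclo.character...any.d.works}) and~(\ref{it:d.is.cyclo.character...omega.equals.nu}) your general idea --- perturb a given representing tuple to produce a representation with the modified twist --- is the right one, but the choice of modification and the verification are not carried out. The paper keeps $\alpha$ fixed and replaces $\delta_{s-1}$ by $\delta_{s-1}-x\alpha$ (additively), where $d_s=d_{s-1}+p^{s-1}x$; this makes the first defining equation balance immediately and the norm equation balance after a short calculation. Your proposal to replace $\alpha$ by $\alpha\cdot\gamma^{p^\omega}$ or $\alpha\xi$ introduces new $\sigma$-action terms that you would then need to push back into the $\delta_i$'s while respecting the constraint $\delta_i\in K_{a_i}^\times$; you acknowledge that ``the heart of the computation'' consists of matching the resulting norm contributions, but that matching is precisely what is not done, and it is not obviously possible to do with $\mathbf{a}$ held fixed. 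Additionally, to go from showing $(\mathbf{a},\tilde d)$ is a norm pair to showing it is \emph{minimal}, the paper needs Proposition~\ref{pr:aiversusnu} and the ``contraction'' Lemma~\ref{le:delicateIII} to control $a_{s-1}$, and neither of these appears in your outline.
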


The second result gives an arithmetic interpretation for the vector $\mathbf{a}$ from the decomposition.

\begin{theorem}\label{th:a.in.relation.to.norms}
Continue with the hypotheses of Theorem \ref{th:d.is.cyclotomic.character}.  
For $0\le i<\nu$ set
        \begin{equation*}
            b_i =
            \begin{cases}
                -\infty, &\xi_{p^{i+1}}\in
                N_{K/F(\xi_{p^{i+1}})}(K^\times) \\
                \min \{s : \xi_{p^{i+1}}\in
                N_{K/K_{s+1}}(K^\times)\}, &\text{otherwise}
            \end{cases}
        \end{equation*}
        and set $b_\nu=n$.  Then $a_0=b_0$ and for $0 < i<m$,
        \begin{equation*}
            a_i =
            \begin{cases}
                b_i, & b_i> b_{i-1}+1 \text{\ and\ } i\le\nu\\
                -\infty, & b_i= b_{i-1} +1 \text{\ or\ } i>\nu.
            \end{cases}
        \end{equation*}
        In particular, $\mathbf{a} = (-\infty,\cdots,-\infty)$ if either $\nu = \infty$ or $K/F$ is cyclotomic.
\end{theorem}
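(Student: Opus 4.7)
The plan is to read the coordinates of $\mathbf{a}$ directly out of the two defining equations of (\ref{eq:defining.properties.for.exceptionalityIII}) by extracting, for each $i$, a norm identity that exhibits $\xi_{p^{i+1}}$ as an element of $N_{K/K_{s+1}}(K^\times)$ for an appropriate $s$. Throughout, I would invoke Theorem \ref{th:d.is.cyclotomic.character} to replace $d$ by the cyclotomic character of $K/F$ modulo $p^{\min\{m,\nu\}}$, which turns the $N_{K/F}(\alpha)^{(d-1)/p}$ term into an explicit power of a root of unity rather than an opaque norm power. The argument then proceeds by induction on $i$, using the minimality of $(\mathbf{a},d)$ in two ways: first to show that $a_i$ can be taken as small as the recipe predicts, and second to show it cannot be pushed any smaller.

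For the base case $i=0$, the identity $a_0=b_0$ reduces to the $m=1$ instance of (\ref{eq:defining.properties.for.exceptionalityIII}), which recovers the characterization of the exceptional summand index $i(K/F)$ in the decomposition of $K^\times/K^{\times p}$; this is exactly \cite[Th.~3]{MSS}, noting that $F(\xi_p)=F$ since $\xi_p\in F$, so $b_0$ is either $-\infty$ or the smallest $s$ with $\xi_p\in N_{K/K_{s+1}}(K^\times)$. The $p=2,n=1$ hypothesis $-1\in N_{K/F}(K^\times)$ rules out the degenerate case where the exceptional summand is absent. For the inductive step, assuming the correspondence for $j<i$, the achievability direction proceeds as follows. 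When $b_i>b_{i-1}+1$ and $i\le\nu$, grafting a witness $\eta\in K^\times$ with $N_{K/K_{b_i+1}}(\eta)=\xi_{p^{i+1}}$ onto an existing representative $(\alpha,\{\delta_j\})$ by multiplicative substitution on $\alpha$ and compensating corrections in the $\delta_j$ yields a new representative with $\delta_i\in K_{b_i}^\times$. In the complementary case where $b_i=b_{i-1}+1$ or $i>\nu$, one uses either a $p$-th power adjustment of the $\xi_{p^i}$-witness (reducing $\xi_{p^{i+1}}$-data to $\xi_{p^i}$-data one level down) or absorption of $\delta_i$ into $K^{\times p^m}$ to produce a representative with $\delta_i=1$, so that $a_i=-\infty$ is achievable.

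For the non-achievability direction when $b_i>b_{i-1}+1$ and $i\le\nu$, suppose for contradiction that $\delta_i\in K_s^\times$ for some $s<b_i$. Applying $N_{K/K_{s+1}}$ to both sides of the second equation in (\ref{eq:defining.properties.for.exceptionalityIII}), invoking the inductive hypothesis (each $\delta_j$ with $j<i$ descends to $K_{b_{j-1}+1}\subseteq K_{s+1}$, so its norm-contribution becomes a $p^{j-1}$-th power of a $K_{s+1}$-norm), and using the cyclotomic-character form of $d$ to compute the $\alpha$-contribution explicitly, one isolates the $\delta_i$-contribution and extracts a relation exhibiting $\xi_{p^{i+1}}\in N_{K/K_{s+1}}(K^\times)$, contradicting the definition of $b_i$. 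The main obstacle is precisely this isolation step: separating the contribution of $\delta_i$ from those of the other $\delta_j$ and of $\alpha$, while keeping careful track of $p$-divisibility so that the extracted norm identity is at exactly the level $\xi_{p^{i+1}}$ and not a higher or lower $p$-power. The precise inductive description of $\delta_0,\dots,\delta_{i-1}$, together with Theorem \ref{th:d.is.cyclotomic.character}, is indispensable here.

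Finally, the concluding assertion follows directly from the recipe. If $\nu=\infty$, then $K$ contains $\xi_{p^M}$ for arbitrary $M$, and a direct cyclotomic-character computation of $N_{K/F(\xi_{p^{i+1}})}(\xi_{p^M})$ (for suitably chosen $M$ and $p$-coprime twist) exhibits $\xi_{p^{i+1}}$ as a norm, forcing $b_i=-\infty$ and therefore $a_i=-\infty$. If $K/F$ is cyclotomic with $\nu$ finite, then $K=F(\xi_{p^\nu})=K_{\nu-\omega}$ and an analogous cyclotomic computation shows $b_i=b_{i-1}+1$ at each stage $0\le i<\nu$ (with $b_\nu=n$ agreeing with $b_{\nu-1}+1$), so the recipe again forces each $a_i=-\infty$.
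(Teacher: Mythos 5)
Your proposal takes a genuinely different route from the paper, but it has a critical gap precisely at the point you flag as the ``main obstacle,'' and I don't believe the arithmetic approach you sketch can be carried out as stated.

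Your plan is to work directly with the two defining equations of the norm pair~(\ref{eq:defining.properties.for.exceptionalityIII}), applying norm maps along the tower $K/F$ and isolating the $\delta_i$ contribution to extract a relation of the form $\xi_{p^{i+1}}\in N_{K/K_{s+1}}(K^\times)$. The paper does not do this, and for good reason. Its proof instead passes through the \emph{interpolated vector} $\tilde{\mathbf{a}}$ of Section~\ref{se:more.module.properties} and proves the single equation $\tilde a_i = b_i$ for each $i \le \min\{\nu, m\}$; the stated piecewise relationship between $\mathbf{a}$ and $\mathbf{b}$ is then recovered mechanically from Proposition~\ref{pr:interp}. Establishing $\tilde a_i = b_i$ is done by module-theoretic means: the paper uses the full decomposition theorem of \cite{MSS2b} for $J_{i+1}$ as an $R_{i+1}H_t$-module for a carefully chosen $t$, invokes the Krull--Schmidt--Azumaya theorem to identify the exceptional summand $\hat X$ for $K/K_t$ inside the restricted decomposition, and then compares the resulting structural constraints on $X$ against the eigenmodule/triviality/property-$\mathcal{P}$ fingerprints established in Proposition~\ref{pr:eigen} and Corollary~\ref{cor:direct.sums.do.not.have.property.P}. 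That is, the contradiction in the ``non-achievability'' direction comes not from any algebraic manipulation of the norm equations, but from the incompatibility of two structural facts about $X$ as an $R_mH$-module, one derived from the assumption $b_i < \tilde a_i$ and the other from Proposition~\ref{pr:eigen}.

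The concrete gap in your sketch is twofold. First, the minimality of a norm pair is a minimality of the full pair $(\mathbf{a},d)$ with respect to the partial order, not a coordinatewise minimality of each $a_i$; there is no obvious inductive dichotomy of ``achievability'' versus ``non-achievability'' for a single entry $a_i$ with the other entries fixed, because changing $a_i$ forces compensating changes in the $\delta_j$ and potentially in $d$, and these interact through both equations in~(\ref{eq:defining.properties.for.exceptionalityIII}) simultaneously. Second, and more seriously, the ``isolation step'' you identify is not merely a bookkeeping nuisance: the second equation of~(\ref{eq:defining.properties.for.exceptionalityIII}) is an equality in $F^\times$, so applying $N_{K/K_{s+1}}$ does not immediately produce a $K_{s+1}$-level norm relation, and disentangling the $\delta_i$ term from the $\alpha$ term and the other $\delta_j$ terms at the correct $p$-divisibility level is exactly what the polynomial machinery $P$, $Q_d$, $T_d$ of Section~\ref{sec:notation} was built for (cf.\ the proof of Proposition~\ref{pr:aiversusnu}, where an analogous but weaker isolation is done for the cruder bound $a_i \le n+i-\nu$). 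Even with that machinery the paper only obtains \emph{inequalities}, not the equality $\tilde a_i = b_i$ --- the equality requires the module-theoretic argument. Your approach would need either to reconstruct the substance of Section~\ref{se:more.module.properties} or to find a genuinely new way around the coupling; the sketch provides neither.

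The parts you do get right are the base case (indeed just \cite[Th.~3]{MSS}) and the final assertion (Propositions~\ref{pr:nuinfty} and~\ref{pr:cyclopair} handle $\nu=\infty$ and the cyclotomic case, in line with your computation).
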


Before we consider interpretations for this result, we make two remarks about well-definition issues that it presents.

\begin{remark*}
Suppose that $\omega \leq \ell < \nu$.  Since we assume that $\omega>1$ when $p=2$, it follows that $F(\xi_{p^{\ell+1}}) = K_{\ell+1-\omega}$.  In particular, we have $\xi_{p^{\ell+1}} \not\in K_{\ell-\omega}$, and therefore $\min\{s: \xi_{p^{\ell+1}} \in N_{K/K_{s+1}}(K^\times)\} \geq \ell-\omega$.  From this we see that if $b_\ell \neq -\infty$, then we must have $b_\ell \geq \ell-\omega$.  In fact, we claim that the value $b_\ell = \ell-\omega$ is also impossible.  If we have $\ell-\omega \in \{s: \xi_{p^{\ell+1}} \in N_{K/K_{s+1}}(K^\times)\}$, then since $K_{\ell+1-\omega} = F(\xi_{p^{\ell+1}})$, we get $\xi_{p^{\ell+1}} \in N_{K/F(\xi_{p^{\ell+1}})}(K^\times)$.  But then the definition of $b_\ell$ gives us $b_\ell = -\infty$.

Given these bounds on the values of $\mathbf{b} = (b_0,b_1,\cdots,b_{m-1})$ and the relationship between $\mathbf{a}$ and $\mathbf{b}$ described in Theorem \ref{th:a.in.relation.to.norms}, one might want reassurance that for $\omega \leq \ell < \nu$ with $a_\ell \neq -\infty$, we have $a_\ell > \ell-\omega$.  Fortunately, just such a bound is described later in this paper in Proposition \ref{pr:minimality.conditions}(\ref{it:exc.mod.indecom.condition...lower.bound.based.on.UIII}) (using the fact that the value of $t$ from  Proposition \ref{pr:minimality.conditions}(\ref{it:exc.mod.indecom.condition...lower.bound.based.on.UIII}) must be $\omega$ based on Theorem \ref{th:d.is.cyclotomic.character}).
\end{remark*}

\begin{remark*}
In relating $\mathbf{a}$ to $\mathbf{b}$, the statement of Theorem \ref{th:a.in.relation.to.norms} suggests that for each $0 \leq i < \nu$, either one has $b_i=b_{i-1}+1$ or $b_i>b_{i-1}+1$.  In particular, the case $b_i = b_{i-1}>-\infty$ would seem to be excluded.  The fact that this case can be ignored is a consequence of the multiplicative properties of the norm map.  Suppose that we have $b_i = s >-\infty$, so that $\xi_{p^{i+1}} = N_{K/K_{s+1}}(\gamma)$ for some $\gamma \in K^\times$.  Note that since $b_i > -\infty$, we must have $K_{s+1} \supsetneq F(\xi_{p^{i+1}})$.  We will see later in Lemma \ref{le:norm.of.root.of.unity} that $N_{K/K_s}(\gamma) = N_{K_{s+1}/K_s}(\xi_{p^{i+1}}) = \xi_{p^i}$, and hence $b_{i-1} \leq s-1$.
\end{remark*}

One observation to make about Theorem \ref{th:a.in.relation.to.norms} is that the value for $b_0$ is precisely the definition of the quantity $i(K/F)$ that we described earlier from the Galois module decomposition of $K^\times/K^{\times p}$ in the case where $\xi_p \in F$. Indeed, this is no coincidence, for in \cite[Prop.~4.8]{MSS2b} it is established that $i(K/F) = a_0$ when $(\mathbf{a},d)$ is a minimal norm pair.  Given that $i(K/F)$ (and therefore $a_0$) can be phrased in the language of the solvability of a particular class of embedding problems along the tower $K/F$, one might wonder whether the other entries of $\mathbf{a}$ have a  similar characterization.  Happily, the answer is a resounding ``yes," since we have already discussed how the solvability of an embedding problem $\mathbb{Z}/p^{i+n}\mathbb{Z} \twoheadrightarrow \mathbb{Z}/p^n\mathbb{Z}$ for a given extension is measured by the appearance of a primitive $p^i$th root of unity as a norm in that extension.

\begin{corollary}\label{cor:Albert.result.for.higher.embedding.problems}
Continue with the hypotheses of Theorem \ref{th:d.is.cyclotomic.character}, and for $0 \leq i < \nu$ let $b_i$ be defined as in Theorem \ref{th:a.in.relation.to.norms}.  Let $p^I=[K:F(\xi_{p^{i+1}})]$.  Then $b_i = -\infty$ if and only if the embedding problem $\mathbb{Z}/p^{I+i}\mathbb{Z} \twoheadrightarrow \mathbb{Z}/p^{I}\mathbb{Z}$ over $K/F(\xi_{p^{i+1}})$ is solvable.  Otherwise, 
$$b_i=\min\{s: \mathbb{Z}/p^{n-s-1+i}\mathbb{Z} \twoheadrightarrow \mathbb{Z}/p^{n-s-1}\mathbb{Z} \text{ over }K/K_{s+1} \text{ is solvable}\}.$$
\end{corollary}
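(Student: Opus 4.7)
The corollary is essentially a dictionary translation: it rewrites the norm conditions in the definition of $b_i$ (from Theorem \ref{th:a.in.relation.to.norms}) as solvability statements for cyclic embedding problems, using the Arason--Fein--Schacher--Sonn generalization of Albert's theorem recalled in the introduction. The plan is simply to match each of the two cases in the definition of $b_i$ to the correct instance of that norm/embedding-problem dictionary, paying attention only to whether the relevant root of unity lies in the base of the cyclic extension being used.

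For the first clause, assume $b_i=-\infty$, i.e.\ $\xi_{p^{i+1}}\in N_{K/F(\xi_{p^{i+1}})}(K^\times)$. Set $L=F(\xi_{p^{i+1}})$. Because $K/F$ is cyclic of $p$-power order and $L$ is an intermediate field, $K/L$ is itself cyclic, and by hypothesis of degree $p^I$. Since $L$ contains $\xi_{p^{i+1}}$, the Arason--Fein--Schacher--Sonn criterion applies to $K/L$ directly and converts the norm condition into the solvability of the prescribed cyclic embedding problem over $K/L$. The converse direction is exactly the same translation read backwards.

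For the second clause, recall that $b_i=\min\{s:\xi_{p^{i+1}}\in N_{K/K_{s+1}}(K^\times)\}$. To apply the Albert criterion over $K/K_{s+1}$ we need $\xi_{p^{i+1}}\in K_{s+1}$. If $i<\omega$ this is automatic, while if $\omega\le i<\nu$ the first remark after Theorem \ref{th:a.in.relation.to.norms} shows $b_i\ge i+1-\omega$, and since $F(\xi_{p^{i+1}})=K_{i+1-\omega}$ under our standing assumptions, every $s\ge b_i$ gives $K_{s+1}\supseteq F(\xi_{p^{i+1}})$. Thus for each $s\ge b_i$ the field $K_{s+1}$ contains the required root of unity and $K/K_{s+1}$ is cyclic of degree $p^{n-s-1}$; the Arason--Fein--Schacher--Sonn theorem then converts $\xi_{p^{i+1}}\in N_{K/K_{s+1}}(K^\times)$ into the solvability of $\mathbb{Z}/p^{n-s-1+i}\mathbb{Z}\twoheadrightarrow\mathbb{Z}/p^{n-s-1}\mathbb{Z}$ over $K/K_{s+1}$. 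Taking the minimum over $s$ yields the stated characterization of $b_i$.

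There is no real obstacle to overcome: once the root-of-unity bookkeeping from the first remark is invoked, the result is a direct reformulation of Theorem \ref{th:a.in.relation.to.norms}. The only substantive point to verify carefully is that the restriction $\omega>1$ in the case $p=2$ ensures the equality $F(\xi_{p^{i+1}})=K_{i+1-\omega}$ used above, so that the intermediate field $K_{s+1}$ climbing along the tower $K/F$ always crosses $F(\xi_{p^{i+1}})$ in the expected way; this is exactly why those hypotheses are carried over from Theorem \ref{th:d.is.cyclotomic.character}.
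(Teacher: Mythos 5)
Your proposal correctly identifies the intended proof: the paper supplies no formal argument for the corollary, and the discussion in the introduction makes clear it is meant to follow from a direct translation of the norm conditions in the definition of $b_i$ through the Albert/Arason--Fein--Schacher--Sonn criterion, exactly as you do. The observation that the hypothesis $\omega>1$ when $p=2$ guarantees $F(\xi_{p^{i+1}})=K_{i+1-\omega}$ sits on the $K/F$ tower is the right point to emphasize.

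There is, however, an exponent discrepancy that you assert without checking and that a careful application of the criterion would have caught. The criterion, as recalled in the paper's introduction, says that for a cyclic extension $E/E'$ of degree $p^r$ with $\xi_{p^j}\in E'$, one has $\xi_{p^j}\in N_{E/E'}(E^\times)$ if and only if $\mathbb{Z}/p^{r+j}\mathbb{Z}\twoheadrightarrow\mathbb{Z}/p^r\mathbb{Z}$ is solvable over $E/E'$ --- that is, the jump in the embedding problem equals the power of the root of unity. Since $b_i$ is defined via $\xi_{p^{i+1}}$, the translated embedding problems should carry a jump of $i+1$, not $i$: the statements ought to read $\mathbb{Z}/p^{I+i+1}\mathbb{Z}\twoheadrightarrow\mathbb{Z}/p^{I}\mathbb{Z}$ over $K/F(\xi_{p^{i+1}})$ and $\mathbb{Z}/p^{n-s+i}\mathbb{Z}\twoheadrightarrow\mathbb{Z}/p^{n-s-1}\mathbb{Z}$ over $K/K_{s+1}$. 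One can already see the problem at $i=0$: there $F(\xi_p)=F$, so $I=n$ and the corollary's formula reduces to the trivial problem $\mathbb{Z}/p^n\mathbb{Z}\twoheadrightarrow\mathbb{Z}/p^n\mathbb{Z}$, which is always solvable; yet $b_0=-\infty$ holds only when $\xi_p\in N_{K/F}(K^\times)$, which is not automatic. So the exponents as printed cannot be right, and a proof of the statement as printed would necessarily fail. You reproduce the printed exponents verbatim in the middle of your argument (``converts $\xi_{p^{i+1}}\in N_{K/K_{s+1}}(K^\times)$ into the solvability of $\mathbb{Z}/p^{n-s-1+i}\mathbb{Z}\twoheadrightarrow\mathbb{Z}/p^{n-s-1}\mathbb{Z}$''), which is not what the criterion you are invoking actually gives. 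This looks to be a typo inherited from the paper rather than a new mistake of your own, but a proof is exactly where such a mismatch should have been noticed; the translation machinery is the whole content of the corollary, so the arithmetic of the jump is not an optional detail.
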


\subsection{Outline of paper} We have attempted to make this paper as self-contained and approachable as possible.  In particular, we begin the paper with some content that will bring the reader up to speed on the relevant details from the decomposition of $K^\times/K^{\times p^m}$ from \cite{MSS2b}, including some basic arithmetic and module-theoretic facts.  These can be found in section \ref{sec:notation}.  In section \ref{se:np3} we prove Theorem \ref{th:d.is.cyclotomic.character}.  Section \ref{se:more.module.properties} is spent establishing some additional module-theoretic properties of $X$ which are then put to use in section \ref{se:proof1} to prove Theorem \ref{th:a.in.relation.to.norms}.

\subsection{Acknowledgements}

We gratefully acknowledge discussions and collaborations with our friends and colleagues D.~Benson, B.~Brubaker, J.~Carlson, S.~Chebolu, F.~Chemotti, I.~Efrat, A.~Eimer, J.~G\"{a}rtner, S.~Gille, P.~Guillot, L.~Heller, D.~Hoffmann, J.~Labute, T.-Y.~Lam, R.~Sharifi, N.D.~Tan, A.~Topaz, R.~Vakil, K.~Wickelgren, O.~Wittenberg, which have influenced our work in this and related papers. We are also extremely grateful to an anonymous referee for their help in improving the quality and clarity of the manuscript.

\section{Notation and Background}\label{sec:notation}

Throughout the balance of the paper we will adopt the following notation.  We set $G:=\Gal(K/F) = \langle \sigma \rangle \simeq \Z/p^n\Z$, and we write $G_i=\Gal(K_i/F)$ where $K_i$ is the intermediate field of degree $p^i$ over $F$.  We assume that $F$ contains a primitive $p$th root of unity, and that $a_0 = -\infty$ if $p=2$ and $n=1$.  We define $\omega$ and $\nu$ as we did prior to the statement of Theorem \ref{th:d.is.cyclotomic.character}. For $m \in \N$ we define $R_m=\Z/p^m\Z$, so that $J_m$ is naturally an $R_mG$-module.  We define $p^{-\infty}=0$ and $\sigma^{p^{-\infty}}=0$.  The set $1+p^i\Z$ will be denoted $U_i$; of course, there is a natural filtration $U_1 \supseteq U_2 \supseteq U_3 \supseteq \cdots$.  Finally, to make equations easier to read, we move to additive notation when describing the module $J_m$ and its elements.

Throughout this paper we will write $[\gamma]_m$ to indicate the class in $J_m$ represented by $\gamma \in K^\times$.  In a similar way, when $A \subset K^\times$ we will write $[A]_m$ to indicate the image of $A$ within $J_m$. The module $J_m$ will always be written additively, so that the action of $R_mG$ is multiplicative.  We will also adopt additive notation when discussing the group $K^\times$ as well; this will make certain expressions which involve complicated actions of $R_m$ simpler for the reader to read.  In particular we will write $0$ for $1 \in K^\times$ and use $\frac{1}{p}$ to denote the $p$th root of an element.

\subsection{Norm operators}

In studying the structure of $K^{\times}/K^{\times p^m}$, certain polynomials in $\Z G$ play a major role.

\begin{definition*}
For $0\le j\le i \leq n$ and $d \in U_1$, set
\begin{align*}
    P(i,j) 
    &:= \sum_{k=0}^{p^{i-j}-1} \sigma^{kp^j} \in \Z[\sigma]\\
    Q_d(i,j) 
    &:= \sum_{k=0}^{p^{i-j}-1}
    (d^{p^j})^{p^{i-j}-1-k}(\sigma^{p^j})^k \in \Z[\sigma]\\
    T_{d}(i) 
    &:= \sum_{s=1}^{p^i-1} \sum_{k=0}^{s-1}
    d^k\sigma^{s-1-k} \in \Z[\sigma].
\end{align*}
We will often abuse notation and refer to the images of these polynomials in $\Z G$ and $R_m G$ by these same names.  The map $\phi_d:\Z G \to \Z$ is induced by the evaluation $\sigma \mapsto d$ (extended in the natural way to powers of $\sigma$), and when $d^{p^n} =1 \pmod{p^m}$ we write $\phi_d^{(m)}:R_m G \to R_m$ for the induced ring homomorphism.  Finally, we have natural maps $\Psi_j: \Z G \to \Z G_j$ and $\chi_j:R_m G \to R_m G_j$.
\end{definition*}

It is not difficult to show that for $0\le k\le j\le i$, one has
\begin{align*}
    P(i,k) &= P(i,j)P(j,k) \\
    Q_d(i,k) &= Q_d(i,j)Q_d(j,k)\\
    (\sigma^{p^j}-1)P(i,j) &= (\sigma^{p^i}-1) \\
    (\sigma^{p^j}-d^{p^j})Q_d(i,j) &= (\sigma^{p^i}-d^{p^{i}})\\
    (\sigma-d)T_d(i) &= \sum_{s=0}^{p^i-1} (\sigma^s-d^s)\\
\end{align*}

The polynomials $P(i,j)$ and $Q_d(i,j)$ were studied extensively in \cite{MSS2a} and used throughout \cite{MSS2b}; the polynomial $T_d(i)$ will be useful in certain calculations we perform later in this paper.  The following results will be useful to us in our current investigation.

\begin{lemma}[{\cite[Lem.~2.1]{MSS2a}}]\label{le:upowerIII}
    Suppose that $i\in \N$, that $d\in U_i$,
    and that $j\ge 0$.  Then
    \begin{equation*}
        \begin{cases}
            d^{p^j}\in U_{i+j},
            &p>2 \text{\ or\ } i>1 \text{\ or\ } j=0\\
            d^{p^j}=1, &p=2, \ i=1, \ j>0, \ d=-1\\
            d^{p^j}\in U_{v+j},
            &p=2, \ i=1, \ j>0, \ d\in -U_v
        \end{cases}
    \end{equation*}
	When additionally $d \not\in U_{i+1}$, then we have $d^{p^j} \not\in U_{i+j+1}$ in the first case; likewise if $d \not\in -U_{v+1}$ in the last case, then $d^{p^j} \not\in U_{v+j+1}$.
\end{lemma}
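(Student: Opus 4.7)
The plan is to prove the lemma by induction on $j$, reducing all three cases to a uniform ``one-step'' binomial estimate. First I would establish the following: writing $e = 1 + p^k y$ with $y \in \Z$, the binomial theorem gives
\[
e^p = 1 + p^{k+1} y + \sum_{\ell = 2}^{p} \binom{p}{\ell}\, p^{k\ell}\, y^\ell.
\]
Since $v_p\bigl(\binom{p}{\ell}\bigr) = 1$ for $2 \le \ell < p$ and $v_p\bigl(\binom{p}{p}\bigr) = 0$, the $\ell$-th higher-order term has $p$-adic valuation at least $1 + k\ell$, or $kp$ in the case $\ell = p$.  Both estimates strictly exceed $k+1$, hence force all higher terms into $p^{k+2}\Z$, precisely when $k \ge 1$ and $(p-1)k \ge 2$; the latter holds for $p > 2$ with $k \ge 1$, and for $p = 2$ with $k \ge 2$. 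Under these conditions, $e \in U_k$ yields $e^p \in U_{k+1}$, and $e \notin U_{k+1}$ yields $e^p \notin U_{k+2}$.

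Iterating the one-step estimate then handles the first case: with $k$ taking the successive values $i, i+1, \dots, i+j-1$, each step satisfies the hypothesis of the estimate (for $p > 2$ one has $k \ge i \ge 1$ throughout, and for $i > 1$ one has $k \ge 2$ throughout). Sharpness propagates at each step from the second assertion of the estimate, and the base case $j = 0$ is trivial.

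The second case is immediate since $(-1)^{p^j} = (-1)^{2^j} = 1$ for $j \ge 1$. For the third case I would pass to $-d \in U_v$, exploiting the identity $d^{2^j} = (-d)^{2^j}$ (valid for $j \ge 1$ because $(-1)^{2^j} = 1$), and then apply the already-established first case to $-d$. The one-step estimate is valid at each step because $v \ge 2$: the third case is invoked precisely when $d \in U_1 \setminus U_2$ (otherwise the first case gives sharper information), and this forces $-d \in U_2$, since $d = 1 + 2u$ with $u$ odd gives $-d - 1 = -2 - 2u = -2(1+u)$ with $1+u$ even.

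The main obstacle is the borderline case $p = 2$, $k = 1$ of the one-step estimate. There the $\ell = 2$ binomial term has $2$-adic valuation $2k = 2$, matching the valuation $k+1 = 2$ of the leading term, so the expected ``gain of one filtration level'' can collapse. This is exactly why the lemma must bifurcate at $p = 2$, $i = 1$, and why the third case rewrites $d$ as $-(-d)$: the substitution moves the analysis into a regime of higher filtration depth where the binomial estimate again behaves cleanly.
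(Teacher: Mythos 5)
The paper cites this lemma from \cite{MSS2a} without reproducing a proof, so there is no in-document proof to compare against; the adjacent Lemma \ref{le:upowerpnot2III} (also quoted from \cite{MSS2a}) suggests that the source likely derives the present statement from explicit closed-form expansions of $d^{p^j}$, whereas you work directly from a one-step valuation estimate, but both are standard binomial arguments of the same flavor. Your proof is correct: the one-step bound ($e \in U_k \Rightarrow e^p \in U_{k+1}$ with sharpness, valid when $k \ge 1$ and $(p-1)k \ge 2$) is established accurately, iterating it over $k = i, i+1, \dots, i+j-1$ handles the first case and propagates sharpness, the $d = -1$ case is trivial, and the sign-flip $d^{2^j} = (-d)^{2^j}$ for $j \ge 1$ correctly transports the third case into the regime where the one-step estimate applies.

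One small point to make explicit: your claim that ``the third case is invoked precisely when $d \in U_1 \setminus U_2$'' is the right reading, but it amounts to assuming $i$ is chosen to be the exact filtration level of $d$. Taken literally with arbitrary $i$ satisfying only $d \in U_i$, the sharpness assertion in the third case can fail; for instance $d = 5$ lies in $U_1$ and in $-U_1 \setminus -U_2$ (so the maximal $v$ is $1$), yet $5^2 = 25 \in U_3 = U_{v+j+1}$ for $j=1$. The intended hypothesis is $d \in U_i \setminus U_{i+1}$ throughout (which also makes the first sharpness clause non-vacuous), and under that reading the parity observation $d \in U_1 \setminus U_2 \Rightarrow -d \in U_2$ does exactly the work you need: it guarantees $v \ge 2$, so the one-step estimate and its sharpness both apply to $-d$.
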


\begin{lemma}[{\cite[Lem.~2.2]{MSS2a}}]\label{le:upowerpnot2III}
    Suppose that $p>2$, $d\in U_i$, and $i\ge 1$.  Write $d=1+p^i x$.  Then there exist $f,g \in \Z$ so that
    \begin{equation*}
        d^{p^j} = 1+p^{i+j }x\left(1+fxp^i + gxp^{i+1}\right).
    \end{equation*}


    Now suppose that $p=2$ and $d\in U_i$ for $i\ge 1$.  Write $d=1+2^i x$.  Then there exists $c \in \Z$ so that
    \begin{equation*}
        d^{2^{j}} = 1+2^{i+j}x\left(1+2^{i-1}x + 2^ixc\right).
    \end{equation*}
\end{lemma}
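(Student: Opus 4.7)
The plan is to prove both parts by induction on $j$, exploiting the identity $d^{p^{j+1}} = (d^{p^j})^p$ together with the binomial theorem, and absorbing all lower-order error terms into adjustments of $f$ and $g$ (respectively $c$) at each step.

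For $p > 2$ the base case $j = 0$ is immediate with $f = g = 0$, since $d = 1 + p^i x$ by hypothesis. For the inductive step, write the inductive hypothesis as $d^{p^j} = 1 + u$, where $u = p^{i+j} X$ and $X = x(1 + f x p^i + g x p^{i+1})$. The binomial expansion gives
\[
(1+u)^p \;=\; 1 + pu + \sum_{k=2}^{p} \binom{p}{k} u^k.
\]
The leading contribution $pu = p^{i+j+1} X$ already supplies the desired form $1 + p^{i+(j+1)} x (1 + f x p^i + g x p^{i+1})$, so it suffices to show that the residual sum has $p$-adic valuation at least $2i+j+1$ and carries a factor of $x^2$; then it can be written as $p^{2i+j+1} x^2 (\alpha + \beta p)$ for some $\alpha, \beta \in \Z$, which is the freedom needed to redefine $f, g$ at level $j+1$. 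For each $k$, the term $\binom{p}{k} u^k$ contains $p^{k(i+j)} X^k$, hence a factor of $x^k$, and in particular $x^2$ when $k \geq 2$. The valuation bound follows because $p \mid \binom{p}{k}$ for $1 \leq k \leq p-1$ (giving $v_p \geq k(i+j) + 1 \geq 2(i+j)+1$), while for $k = p$ the term has valuation $p(i+j) \geq 2(i+j) + 1$ whenever $p \geq 3$ and $i + j \geq 1$.

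For $p = 2$ the bookkeeping is more delicate because $\binom{2}{2} = 1$ carries no extra factor of $p$, so the $u^2$ contribution sits at valuation $2(i+j)$ rather than $2(i+j)+1$. This is exactly why the explicit $2^{i-1} x$ term appears inside the parenthesized expression. Starting from $j = 1$, direct computation of $d^2 = 1 + 2^{i+1} x + 2^{2i} x^2$ matches the form with $c = 0$. For the inductive step I would expand $(1 + u)^2 = 1 + 2u + u^2$ with $u = 2^{i+j} X$ and $X = x + 2^{i-1} x^2 + 2^i x^2 c$; the term $2u$ produces the main expansion at level $j+1$, while $u^2 = 2^{2(i+j)}\bigl(x^2 + 2^i x^3 + \cdots\bigr)$ contributes a $2^{2i+2j} x^2$ correction together with higher order terms carrying extra powers of both $2$ and $x$. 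Matching the resulting expression against $1 + 2^{i+j+1} x(1 + 2^{i-1} x + 2^i x c')$ then pins down $c'$ up to terms of sufficiently high $2$-adic valuation that can be absorbed into $c'$.

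The main obstacle in both halves is the careful tracking of $p$-adic valuations and powers of $x$ to ensure that all corrections produced by raising $d^{p^j}$ to the $p$th power land at exactly the allowed orders $p^{2i+(j+1)} x^2$ and $p^{2i+(j+1)+1} x^2$, or are divisible by these. The $p = 2$ case is the more sensitive of the two because the absence of a $p$-factor in $\binom{p}{2}$ forces the leading quadratic correction to appear explicitly in the claimed formula, so the inductive step must track how this term propagates between consecutive levels; by contrast, for $p > 2$ every residual term carries at least one hidden factor of $p$ from the binomial coefficients, so the only quadratic obstruction is the easily-absorbed $X^k$ expansion.
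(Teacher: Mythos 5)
Your inductive approach is correct; the paper cites this lemma from \cite{MSS2a} without reproducing a proof, so there is no in-paper argument to compare against. The valuation bookkeeping checks out in both cases: for $p>2$, writing $\text{rest}=\sum_{k=2}^p\binom{p}{k}u^k$ with $u=p^{i+j}X$, each term carries $x^2$ and has $p$-adic valuation at least $2i+j+1$ (using $v_p\binom{p}{k}=1$ for $1\le k\le p-1$, and $p(i+j)\ge 2i+j+1$ for $k=p$ since $p\ge3$ and $i\ge1$), so $\text{rest}=p^{2i+j+1}x^2A$ with $A\in\Z$ and one may set $f'=f+A$, $g'=g$; for $p=2$ the identity $u^2=2^{2(i+j)}(X/x)^2x^2=2^{2i+j+1}x^2\bigl(2^{j-1}(X/x)^2\bigr)$ shows the correction is an integer absorbed into $c'-c$ once $j\ge1$. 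One point worth stating explicitly: the $p=2$ formula fails at $j=0$ unless $x=0$, since it would force $2^{i-1}x(1+2c)=0$ with $1+2c$ odd; so the lemma should be read with $j\ge1$ in the $p=2$ case, and your choice of $j=1$ as the base case is precisely the right fix.
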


\begin{lemma}[{\cite[Lem.~2.5]{MSS2a}}]\label{le:kerbasicIII}
    Suppose $m\in \N$.
    For $0\le i\le n$ and $0\le k\le m$,
    \begin{equation*}
        \ann_{R_mG_i} p^k = \langle p^{m-k}\rangle.
    \end{equation*}

    For $0\le j<i\le n$ and $0\le k\le m$,
    \begin{equation*}
        \ann_{R_mG_i} p^k(\sigma^{p^j}-1)= \langle P(i,j),
        p^{m-k}\rangle.
    \end{equation*}
\end{lemma}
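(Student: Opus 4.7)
The plan is to treat $R_m G_i$ as a free $R_m$-module on the basis $\{1,\sigma,\ldots,\sigma^{p^i-1}\}$ and convert each annihilator condition into a coordinate-wise system of congruences in $R_m$. From this viewpoint the first claim is immediate: writing $f = \sum_{s=0}^{p^i-1} a_s \sigma^s$, the equation $p^k f = 0$ in $R_m G_i$ is equivalent to $p^k a_s \equiv 0 \pmod{p^m}$ for every $s$, which holds iff $p^{m-k}\mid a_s$ for all $s$, i.e.\ $f \in \langle p^{m-k}\rangle$.

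For the second claim, the inclusion $\langle P(i,j), p^{m-k}\rangle \subseteq \ann_{R_m G_i} p^k(\sigma^{p^j}-1)$ follows directly from the identity $(\sigma^{p^j}-1)P(i,j) = \sigma^{p^i}-1$ listed just before the lemma, which vanishes in $R_m G_i$, together with the fact that $p^{m-k}$ kills $p^k$ in $R_m$.

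For the reverse inclusion I would expand $f(\sigma^{p^j}-1)$ in the basis. A straightforward reindexing modulo $p^i$ yields $f(\sigma^{p^j}-1) = \sum_{s=0}^{p^i-1} (a_{s-p^j}-a_s)\sigma^s$, with subscripts read modulo $p^i$. Hence $p^k f(\sigma^{p^j}-1)=0$ translates to $a_{s-p^j}\equiv a_s \pmod{p^{m-k}}$ for all $s$; equivalently, the coefficients of $f$ are constant mod $p^{m-k}$ along each orbit of the shift-by-$p^j$ action on $\Z/p^i\Z$. These orbits are indexed by residues $0 \leq r < p^j$, and the orbit through $r$ has indicator element $\sigma^r P(i,j) = \sum_{l=0}^{p^{i-j}-1}\sigma^{r+lp^j}$ in $R_m G_i$. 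Writing $a_s = c_{s \bmod p^j}+p^{m-k}b_s$ and regrouping then exhibits
\[
 f = \sum_{r=0}^{p^j-1} c_r \sigma^r P(i,j) + p^{m-k}\bigl(\textstyle\sum_s b_s \sigma^s\bigr),
\]
placing $f$ in $\langle P(i,j), p^{m-k}\rangle$.

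The main obstacle is mild and amounts to careful bookkeeping with cyclic indices: one must verify that the shift $s\mapsto s-p^j \pmod{p^i}$ partitions the basis into exactly $p^j$ orbits of size $p^{i-j}$, and that the sum over each orbit is precisely the element $\sigma^r P(i,j)$. Beyond that the argument is a direct linear-algebra calculation in the free $R_m$-module $R_m G_i$, with no subtleties arising from the fact that $R_m$ is not a domain, because everything reduces to divisibility conditions on the scalar coefficients.
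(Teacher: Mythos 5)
Your argument is correct. Note, however, that the paper does not actually prove this lemma: it is imported verbatim from \cite{MSS2a} (Lemma~2.5 there), so there is no in-text proof in this document to compare against. On its own merits, your proof is the natural one: viewing $R_mG_i$ as a free $R_m$-module on $\{1,\sigma,\dots,\sigma^{p^i-1}\}$, the first claim is immediate, and for the second the expansion $f(\sigma^{p^j}-1)=\sum_s(a_{s-p^j}-a_s)\sigma^s$ (indices mod $p^i$) correctly reduces the annihilator condition to constancy of coefficients modulo $p^{m-k}$ on orbits of the shift $s\mapsto s+p^j$. Your bookkeeping of those orbits is right: since $\gcd(p^j,p^i)=p^j$, they are exactly the $p^j$ cosets of $p^j\Z/p^i\Z$, each of size $p^{i-j}$, and the orbit indicator supported on the coset of $r$ is $\sigma^rP(i,j)$ (which sits inside the degree range $[0,p^i-1]$ because $r+p^i-p^j<p^i$). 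Writing $a_s=c_{s\bmod p^j}+p^{m-k}b_s$ and regrouping then lands $f$ in $\langle P(i,j),p^{m-k}\rangle$ as claimed. The only point worth making explicit is the forward inclusion when $k=m$ and $k=0$ as sanity checks ($\ann 0 = R_mG_i = \langle P(i,j),1\rangle$, and $\ann(\sigma^{p^j}-1)=\langle P(i,j)\rangle$ since $p^{m-k}=0$), both of which your formula handles automatically.
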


\begin{lemma}[{\cite[Lem.~2.6]{MSS2a}}]\label{le:phidbIII}
    Suppose $m\in \N$, $0\le i\le n$, and $d\in U_1$.  Then
    \begin{equation*}
        \ann_{R_mG_i} (\sigma-d) = \langle p^kQ_d(i,0)\rangle
    \end{equation*}
    where $k=\min\{v\ge 0 : p^v(d^{p^i}-1)=0 \mod p^m\}$.
\end{lemma}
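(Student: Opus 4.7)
The plan is to verify the two inclusions separately, with both reducing to explicit computations in the commutative group ring $R_mG_i$. The forward inclusion $\langle p^k Q_d(i,0)\rangle \subseteq \ann_{R_m G_i}(\sigma - d)$ follows immediately from the identity $(\sigma - d)Q_d(i,0) = \sigma^{p^i} - d^{p^i}$ listed among the relations just before Lemma~\ref{le:upowerIII}. Passing to $R_m G_i$, where $\sigma^{p^i} = 1$, the right-hand side becomes $1 - d^{p^i}$, and by the defining property of $k$ we have $p^k(1-d^{p^i}) = 0$ in $R_m$.

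For the reverse inclusion, I would pick an arbitrary $f = \sum_{j=0}^{p^i-1} c_j \sigma^j \in \ann_{R_m G_i}(\sigma - d)$ written in the natural $R_m$-basis. Expanding $(\sigma-d)f = 0$ and collecting coefficients of $\sigma^j$ (remembering $\sigma^{p^i} = 1$) yields the recurrence $c_{j-1} = d c_j$ for $1 \leq j \leq p^i - 1$ together with the cyclic closure condition $c_{p^i - 1} = d c_0$. Since $d \in U_1$, the element $d$ is a unit in $R_m$, so iterating the recurrence gives $c_j = d^{-j}c_0$ for all $j$, and then the cyclic condition collapses to the single scalar relation
\[
c_0(d^{p^i} - 1) \equiv 0 \pmod{p^m}.
\]
By construction of $k$, the annihilator of $d^{p^i} - 1$ in $R_m$ equals $p^k R_m$, so $c_0 = p^k c_0'$ for some $c_0' \in R_m$.

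Finally, I would reassemble $f$: factoring out $d^{-(p^i - 1)}$ from $\sum_{j=0}^{p^i-1} d^{-j} \sigma^j$ recovers $Q_d(i,0) = \sum_{j=0}^{p^i-1} d^{p^i - 1 - j} \sigma^j$, so
\[
f = c_0 \sum_{j=0}^{p^i-1} d^{-j}\sigma^j = c_0 d^{-(p^i-1)} Q_d(i,0) = p^k \bigl(c_0' d^{-(p^i-1)}\bigr) Q_d(i,0) \in \langle p^k Q_d(i,0)\rangle.
\]
The argument has no serious obstacle; the one point to handle carefully is confirming that the scalar equation $c_0(d^{p^i}-1)\equiv 0 \pmod{p^m}$ is equivalent to $p^k \mid c_0$. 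This is verified by writing $d^{p^i} - 1 = p^\ell u$ with $u$ a unit when $\ell < m$ (and setting $\ell = \infty$ when $d^{p^i} \equiv 1 \pmod{p^m}$), and observing that $k = \max\{0, m-\ell\}$ is precisely the $p$-adic valuation needed to clear the constraint.
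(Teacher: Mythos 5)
Your proof is correct. The paper itself does not prove this lemma; it is cited from the companion preprint [MSS2a, Lem.~2.6], so there is no in-text argument to compare against. Your direct approach is the natural one: the forward inclusion follows immediately from the telescoping identity $(\sigma-d)Q_d(i,0)=\sigma^{p^i}-d^{p^i}$ already recorded in Section 2, and the reverse inclusion is a clean coefficient computation in the free $R_m$-basis $\{1,\sigma,\dots,\sigma^{p^i-1}\}$ of $R_m G_i$, using that $d\in U_1$ is a unit of $R_m$ to solve the linear recurrence $c_{j-1}=dc_j$ and reduce the cyclic closure condition to the single scalar equation $c_0(d^{p^i}-1)=0$ in $R_m$. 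The reassembly step, observing that $\sum_j d^{-j}\sigma^j$ is a unit multiple of $Q_d(i,0)$ (namely by $d^{-(p^i-1)}$), and the identification of the annihilator of $d^{p^i}-1$ in $R_m$ with $p^kR_m$, are both handled correctly, including the edge case $d^{p^i}\equiv 1\pmod{p^m}$ where $k=0$. The argument also specializes correctly to $i=0$, where $G_i$ is trivial and $Q_d(0,0)=1$.
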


\begin{lemma}[{\cite[Lem.~2.7]{MSS2a}}]\label{le:qhomoIII}
    Suppose $m\in \N$ and $0\le j< i\le n$, and let $d \in U_1$ be given.  Suppose further that either $p>2$, $d \in U_2$ or $j>0$.   Then $$\chi_j(Q_d(i,0)) =  p^{i-j} Q_d(j,0) \mod p^{i-j+1}R_mG_j.$$
\end{lemma}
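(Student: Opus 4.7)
The plan is to exploit the multiplicative identity $Q_d(i,0) = Q_d(i,j)Q_d(j,0)$ listed among the algebraic identities after the definition, which reduces the problem to a geometric-series computation in $R_m$. Since $Q_d(j,0)$ already lies in $R_mG_j$, applying $\chi_j$ to this factorization yields $\chi_j(Q_d(i,0)) = \chi_j(Q_d(i,j))\cdot Q_d(j,0)$. Writing $e := d^{p^j}$ and using $\chi_j(\sigma^{p^j}) = 1$, one immediately obtains $\chi_j(Q_d(i,j)) = \sum_{k=0}^{p^{i-j}-1} e^{p^{i-j}-1-k} = 1 + e + \cdots + e^{p^{i-j}-1} =: S$. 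The lemma thus reduces to the single congruence $S \equiv p^{i-j} \pmod{p^{i-j+1}}$ in $R_m$.

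To establish this congruence, I would use the closed form $S = (e^{p^{i-j}}-1)/(e-1) = (d^{p^i}-1)/(d^{p^j}-1)$, valid whenever $e \ne 1$; if $e = 1$ then $S = p^{i-j}$ exactly and there is nothing further to prove. Under the principal hypothesis ($p > 2$, or $p = 2$ with $d \in U_2$), Lemma \ref{le:upowerpnot2III} applied to $d$ produces expansions $d^{p^j} - 1 = p^{j+1} x\, u_j$ and $d^{p^i} - 1 = p^{i+1} x\, u_i$ in which both correction factors $u_i, u_j$ are congruent to $1$ modulo $p$. Dividing yields $S = p^{i-j} \cdot (u_i/u_j) \equiv p^{i-j} \pmod{p^{i-j+1}}$, as required.

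The remaining case is $p = 2$, $j \ge 1$, $d \in U_1 \setminus U_2$. Here $j \ge 1$ forces $d^{2^j} = (-d)^{2^j}$, so replacing $d$ by $-d$ in the calculations above (which leaves $e$, and hence $S$, unchanged) either lands in the trivial case $d = -1$, $e = 1$, or places $-d$ into some $U_v$ with $v \ge 2$, after which Lemma \ref{le:upowerpnot2III} applied to $-d$ delivers the same unit-correction analysis as before. The only real obstacle is this bookkeeping at $p = 2$: the hypothesis ``$d \in U_2$ or $j > 0$'' is precisely what forces $v_p(d^{p^j}-1) = j+1$ with a correction factor that is $\equiv 1 \pmod p$, so that the ratio $u_i/u_j$ is trivial modulo $p$; the excluded configuration $p=2$, $j=0$, $d\notin U_2$ fails exactly because in that situation the correction factor $u_j$ need not reduce to $1$ mod $2$.
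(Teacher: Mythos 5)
Your proof is correct and takes the natural approach one would expect: factor $Q_d(i,0)=Q_d(i,j)Q_d(j,0)$, apply $\chi_j$ so that $\sigma^{p^j}\mapsto 1$ collapses $Q_d(i,j)$ to the geometric sum $S=\sum_{l=0}^{p^{i-j}-1}(d^{p^j})^l$, write $S=(d^{p^i}-1)/(d^{p^j}-1)$, and use Lemma~\ref{le:upowerpnot2III} to see that the correction factors cancel to a unit $\equiv 1\pmod p$; the replacement $d\rightsquigarrow -d$ for $p=2$, $j\ge 1$, $d\notin U_2$ is handled correctly, as is the trivial case $e=1$. The lemma is imported from \cite{MSS2a} rather than proved in this paper, so I cannot compare line by line, but this is almost certainly the intended argument. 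One small imprecision in your closing sentence: the hypothesis does not force $v_p(d^{p^j}-1)=j+1$ (for instance when $d\in U_t$ with $t>1$ this valuation is $t+j$, and in the $p=2$, $-d\in U_v$, $v\ge 2$ subcase it is $v+j$); what the hypothesis actually buys you is exactly what you use in the middle of the proof, namely that both correction factors are $\equiv 1\pmod p$ so that $u_i/u_j\equiv 1\pmod p$ after the powers of $p$ and the common factor of $x$ cancel in the ratio.
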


\begin{lemma}\label{le:annzsmd}
    Suppose $m\in \N$, $0\le i\le n$, and $d\in U_1$.  Then
    \begin{equation*}
        \ann_{\Z G_i} (\sigma-d) = \begin{cases}
            \langle P(i,0)\rangle, &d=1,\ i>0\\
            \Z, &d=1, \ i=0\\
            \langle Q_{-1}(i,0)\rangle, &d=-1,\ i>0\\
            \langle 0\rangle, &d=-1,\ i=0\\
            \langle 0\rangle, &\text{otherwise}.
        \end{cases}
    \end{equation*}
\end{lemma}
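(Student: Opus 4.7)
The plan is to treat each clause of the piecewise definition separately, using the identification $\Z G_i \cong \Z[x]/(x^{p^i} - 1)$ together with the fact that $\Z[x]$ is an integral domain so that non-zero-divisors may be cancelled. I will lift the annihilator condition from $\Z G_i$ up to $\Z[x]$ and then apply the factorizations $x^{p^i} - 1 = (x-1) P(i,0)(x)$ and, when $p = 2$, $x^{2^i} - 1 = (x+1) Q_{-1}(i,0)(x)$ recorded in the list of identities just before the lemma.

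First I dispatch the two cases with $i = 0$. Here $G_0$ is trivial, $\Z G_i = \Z$, and $\sigma - d$ reduces to the integer $1 - d$. This vanishes when $d = 1$, so the annihilator is all of $\Z$; for every other $d \in U_1$ (including $d = -1$ and the generic ``otherwise'' case at $i = 0$) it is a nonzero integer, hence a non-zero-divisor in $\Z$, so the annihilator is $\langle 0 \rangle$.

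Next I turn to $d = 1$ with $i > 0$. The inclusion $\langle P(i,0) \rangle \subseteq \ann_{\Z G_i}(\sigma - 1)$ is immediate from $(\sigma - 1) P(i,0) = \sigma^{p^i} - 1$, which vanishes in $\Z G_i$. For the reverse containment, a lift $q(x) \in \Z[x]$ of any annihilator $q(\sigma)$ satisfies $q(x)(x-1) = h(x)(x^{p^i} - 1) = h(x)(x-1) P(i,0)(x)$ for some $h(x) \in \Z[x]$; cancelling $(x-1)$ in the domain $\Z[x]$ yields $q(x) = h(x) P(i,0)(x)$, whence $q(\sigma) \in \langle P(i,0) \rangle$. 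The case $d = -1$ with $i > 0$ forces $p = 2$, since $-1 \in U_1 = 1 + p\Z$ requires $p = 2$, and then the argument proceeds identically with $Q_{-1}(i,0)$ in place of $P(i,0)$ and $x + 1$ in place of $x - 1$; the key identities are $(\sigma+1) Q_{-1}(i,0) = \sigma^{2^i} - (-1)^{2^i} = 0$ in $\Z G_i$ and $x^{2^i} - 1 = (x+1) Q_{-1}(i,0)(x)$ in $\Z[x]$.

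Finally, for the ``otherwise'' case with $i > 0$, assume $d \in U_1$ and $d \neq \pm 1$, and suppose $q(x) \in \Z[x]$ is a representative of degree less than $p^i$ with $q(\sigma)(\sigma - d) = 0$ in $\Z G_i$; then $q(x)(x - d) = h(x)(x^{p^i} - 1)$ in $\Z[x]$ for some $h$. Since the only integer roots of unity are $\pm 1$ and $d$ is an integer with $d \neq \pm 1$, $d$ is not a $p^i$-th root of unity, so $x - d$ is coprime to $x^{p^i} - 1$ in $\mathbb{Q}[x]$. Hence $(x^{p^i} - 1) \mid q(x)$ in $\mathbb{Q}[x]$, which together with $\deg q < p^i$ forces $q(x) = 0$, and so $q(\sigma) = 0$. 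The proof is essentially mechanical; the only real observation is that the hypothesis ``$d \neq \pm 1$'' is precisely what is needed to rule out a common factor between $x - d$ and $x^{p^i} - 1$ over $\mathbb{Q}$, and so there is no serious obstacle.
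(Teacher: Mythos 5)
Your proof is correct, but it takes a genuinely different route from the paper. The paper's argument for $i>0$ embeds $\Z G_i$ into $\Z_p G_i = \varprojlim_m R_m G_i$ and then reduces to the annihilator computations already done over the torsion rings $R_m G_i$ (Lemmas~\ref{le:kerbasicIII}, \ref{le:phidbIII}, and \ref{le:upowerIII}); the integral answer is read off as the inverse limit of those answers. You instead work entirely inside $\Z[x]$: you lift the annihilation relation to a polynomial identity $q(x)(x-d) = h(x)(x^{p^i}-1)$, and then either cancel the linear factor $x-1$ (resp.\ $x+1$) from the factorization $x^{p^i}-1 = (x-1)P(i,0)(x)$ (resp.\ $x^{2^i}-1 = (x+1)Q_{-1}(i,0)(x)$) using that $\Z[x]$ is a domain, or, when $d\neq\pm1$, use that an integer $d\neq\pm1$ is not a root of unity so $x-d$ is coprime to $x^{p^i}-1$ over $\mathbb{Q}$, forcing a degree-bounded representative $q$ to vanish. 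Your route is more elementary and self-contained --- it avoids the $p$-adic completion and the dependence on the earlier torsion lemmas --- while the paper's route has the virtue of deducing the characteristic-zero statement uniformly from machinery it has already built and will reuse. Both are valid; yours is arguably the cleaner argument if one is reading this lemma in isolation, while the paper's is the more economical one in context.
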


\begin{proof}
    First suppose that $i=0$, so that $\Z G_i\simeq \Z$ and
    we identify $\sigma$ and $1$.  Then when $d=1$,
    $\ann \sigma-d=\ann 0=\Z$, and when $d\neq 1$, $\ann \sigma-d
    =\ann s$ for $s\in \Z\setminus \{0\}$, and so $\ann \sigma-d=\{0\}$.

    Now assume that $i>0$. Observing that $\Z G_i\subset \Z_p G_i$ and $\Z_p G_i$ is the inverse limit of $R_m G_i$ for $m\in \N$, the problem reduces to showing the results for $\Z_p G_i$.  These results follow in the case $d=1$ from Lemma~\ref{le:kerbasicIII}, in the other cases from  Lemmas~\ref{le:phidbIII} and \ref{le:upowerIII}.
\end{proof}

\begin{lemma}\label{le:tdibar}
    For $d\in U_1$ and $0\le j< i\le n$, we have
    \begin{equation*}
        \Psi_j\left(T_d(i)\right) = \begin{cases}
        p^{i-j}T_d(j) + \frac{p^i(p^{i-j}-1)}{2}P(j,0),
        &d=1\\
        2^{i-j-1}T_d(j), &d=-1, j>0\\
        2^{i-j-1}, &d=-1, j=0\\
        p^{i-j}T_d(j)+\left(\sum_{t=0}^{p^{i-j}-1} \frac{d^{tp^j}-1}{d-1}\right)
        Q_d(j,0), &\text{otherwise}.
        \end{cases}
    \end{equation*}
\end{lemma}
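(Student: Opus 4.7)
My approach is to exploit the polynomial identity $(\sigma - d)T_d(i) = P(i,0) - S_d(i)$ listed just before the lemma, where $S_d(i) := \sum_{k=0}^{p^i-1}d^k$ (so $S_d(i) = p^i$ when $d=1$ and $(d^{p^i}-1)/(d-1)$ otherwise). Applying the ring homomorphism $\Psi_j$ and using $\Psi_j(P(i,0)) = p^{i-j}P(j,0)$, I obtain
$$(\bar\sigma - d)\cdot\Psi_j(T_d(i)) = p^{i-j}P(j,0) - S_d(i)$$
in $\Z G_j$. The plan is then to verify, case by case, that the proposed right-hand side $E$ in the lemma satisfies the same equation under left multiplication by $(\bar\sigma - d)$, and to invoke Lemma \ref{le:annzsmd} to control the residual difference $\Psi_j(T_d(i)) - E$, which a priori lies in $\ann_{\Z G_j}(\bar\sigma - d)$.

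For $d=1$, Lemma \ref{le:annzsmd} gives $\ann(\sigma - 1) = \langle P(j,0)\rangle$, so the difference has the form $C\cdot P(j,0)$; applying the augmentation $\epsilon:\bar\sigma \mapsto 1$, with $\epsilon(T_1(k)) = p^k(p^k-1)/2$ and $\epsilon(P(j,0)) = p^j$, pins down $C = p^i(p^{i-j}-1)/2$. For the generic case $d\neq \pm 1$, using $(\bar\sigma - d)Q_d(j,0) = 1 - d^{p^j}$ together with the factorization $S_d(i) = \bigl(\sum_{t=0}^{p^{i-j}-1} d^{tp^j}\bigr)(d^{p^j}-1)/(d-1)$ shows the claimed $E$ satisfies the required equation, and Lemma \ref{le:annzsmd} gives $\ann(\sigma - d) = 0$ in this regime, forcing equality on the nose. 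The case $d=-1$, $j=0$ reduces to an augmentation computation: expanding $T_{-1}(i) = \sum_{\ell=0}^{p^i-2}c_\ell\,\sigma^\ell$ with $c_\ell = \sum_{t=0}^{p^i-2-\ell}(-1)^t$ shows that $c_\ell = 1$ for $\ell$ even and $c_\ell = 0$ for $\ell$ odd, and summing over the $2^{i-1}$ even indices gives $\Psi_0(T_{-1}(i)) = 2^{i-1}$.

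The main obstacle is the remaining case $d = -1$, $j > 0$, where $\ann_{\Z G_j}(\bar\sigma + 1) = \langle Q_{-1}(j,0)\rangle$ is nontrivial, so the strategy above determines $\Psi_j(T_{-1}(i))$ only up to an integer multiple of $Q_{-1}(j,0)$. The augmentation map is useless for this calibration since $\epsilon(Q_{-1}(j,0)) = 0$ when $j \geq 1$. To pin down the residual scalar, I would read off a specific coefficient in the monomial basis of $\Z G_j$ — the most convenient being the coefficient of $\bar\sigma^0$, which on the left can be computed by counting indices $\ell \in \{0,\ldots,2^i-2\}$ with $c_\ell = 1$ and $\ell \equiv 0 \pmod{2^j}$, and on the right can be read off directly from the proposed formula — and then solve for the residual scalar explicitly.
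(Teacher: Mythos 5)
Your treatment of $d=1$, $d\neq\pm 1$, and $(d,j)=(-1,0)$ is sound. For $d=1$ your ``multiply by $(\sigma-1)$, then calibrate via the augmentation'' route is a legitimate alternative to the paper's direct expansion of $\Psi_j(T_1(i))$ in terms of the truncated sums $q_{1,s\bmod p^j}$; for $d\neq\pm 1$ you and the paper do exactly the same thing. The genuine gap is $d=-1$, $j>0$, which you leave as a sketch, and the sketch as written would not succeed: the very first step of your strategy --- checking that the proposed $E=2^{i-j-1}T_{-1}(j)$ satisfies $(\bar\sigma+1)E = (\bar\sigma+1)\Psi_j(T_{-1}(i))$ --- already fails. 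Since your $S_{-1}(i)=S_{-1}(j)=0$ for $i,j\geq 1$, one finds $(\bar\sigma+1)\Psi_j(T_{-1}(i)) = \Psi_j(P(i,0)) = 2^{i-j}P(j,0)$, whereas $(\bar\sigma+1)\cdot 2^{i-j-1}T_{-1}(j) = 2^{i-j-1}P(j,0)$, off by a factor of $2$. So the difference does not lie in $\ann_{\Z G_j}(\bar\sigma+1)$, there is no residual integer multiple of $Q_{-1}(j,0)$ to solve for, and a coefficient-of-$\bar\sigma^0$ calibration would fix that one coordinate while the odd-exponent coordinates refuse to match.

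This is less a flaw in your method than in the target: the stated coefficient $2^{i-j-1}$ in the $d=-1$, $j>0$ line appears to be a typo for $2^{i-j}$. With $E=2^{i-j}T_{-1}(j)$ your strategy goes through cleanly (the $(\bar\sigma+1)$-images agree and the residual is $0$), and this corrected value is also what the ``otherwise'' line gives when one substitutes $d=-1$, $j\geq 1$: then $d^{tp^j}=1$ for every $t$ and the $Q_d(j,0)$ term vanishes. The paper's own proof takes the slicker route of observing that the telescoping $q_{-1,s}+q_{-1,s+1}=\sigma^s$ collapses $T_{-1}(i)$ to $P(i,1)$; the slip occurs in the next line, where ``$\Psi_j(P(i,1))=2^{i-j-1}P(j,1)$'' should read $\Psi_j(P(i,1))=\Psi_j(P(i,j))\,\Psi_j(P(j,1))=2^{i-j}P(j,1)$. (Sanity check at $i=2$, $j=1$: $T_{-1}(2)=1+\sigma^2$, so $\Psi_1(T_{-1}(2))=2$, while $2^{i-j-1}T_{-1}(1)=1$.) None of this affects anything downstream, since $d=-1$ is ruled out throughout the remainder of the paper; but to close your gap you should either run your coefficient check against the corrected formula, or --- shorter --- borrow the identity $T_{-1}(i)=P(i,1)$ and finish in one line.
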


\begin{proof}
    To simplify notation, for $1\le v\le p^n$ set
    \begin{equation*}
        q_{d,v} := \sum_{k=0}^{v-1} d^k \sigma^{v-1-k}.
    \end{equation*}
    (We use a subscript for $v$ in place of an argument to emphasize
    that $v$ is not an exponent of $p$.)  Observe that $q_{d,p^i} = Q_d(i,0)$.)

    We first consider the cases $d=1$ and $d=-1$.

    Suppose that $d=1$.  Then
    \begin{equation*}
        T_1(i) = \sum_{s=1}^{p^i-1} q_{1,s}.
    \end{equation*}
    Observe that for all $t$, $\Psi_j(\sum_{s=tp^j}^{(t+1)p^j-1} \sigma^s)
    =P(j,0)=q_{1,p^j}.$  Hence
    \begin{equation*}
        \Psi_j(T_1(i)) = \sum_{s=1}^{p^i-1} q_{1,s\bmod p^j}+ \left[\frac{s}{p^j}\right] P(j,0),
    \end{equation*}
    where $[\cdot]$ denotes the greatest-integer function.
    We conclude
    \begin{align*}
        \Psi_j(T_1(i)) &= \sum_{s=1}^{p^i-1} q_{1,s\bmod p^j}+ \left[\frac{s}{p^j}\right] P(j,0)\\
        &= p^{i-j}T_1(j) + \left(\sum_{s=1}^{p^{i}-1} \left[\frac{s}{p^j}\right]P(j,0)\right)\\
        &= p^{i-j}T_1(j) + \frac{p^jp^{i-j}(p^{i-j}-1)}{2} P(j,0),
    \end{align*}
    as desired.

    Now suppose that $d=-1$.  Since $d\in U_1$ we must have $p=2$.  Then
    \begin{equation*}
        T_{-1}(i) = \sum_{s=1}^{2^i-1} q_{-1,s}.
    \end{equation*}
If $j=0$ then $\Psi_j(q_{-1,s}) = s \bmod 2$ and so $\Psi_0(T_{-1}(i)) = 2^{i-1}$.  Assume then that $1\le j<i$. Observe that for all even $s$, $q_{-1,s}+q_{-1,s+1} = \sigma^{s}$ and therefore $T_{-1}(i) = P(i,1)$.  Then $\Psi_j(P(i,1))=2^{i-j-1}P(j,1) =2^{i-j-1}T_{-1}(j)$, as desired.

    Finally, suppose that $d\neq 1$, $d\neq -1$.  Since $\ann_{\Z G_j} (\sigma-d)=\{0\}$ by Lemma~\ref{le:annzsmd}, we
    check the result by multiplying by $\sigma-d$ and observing equality.
    On one hand, we have
    \begin{align*}
        (\sigma-d)T_d(i) &= \sum_{s=0}^{p^i-1} \sigma^{s}-d^s\\
        &= \sum_{t=0}^{p^{i-j}-1}\sum_{k=0}^{p^j-1} \sigma^{tp^j+k}-
        d^{tp^j+k}
    \end{align*}
    and so
    \begin{align*}
        \Psi_j((\sigma-d)T_d(i))
        &= \sum_{t=0}^{p^{i-j}-1} \sum_{k=0}^{p^j-1}
        \sigma^{k} -d^k + d^k - d^{tp^j+k}\\
        &= p^{i-j}\left( \sum_{k=0}^{p^j-1} \sigma^k - d^k\right)
        + \left(\sum_{k=0}^{p^j -1} d^{k}\right)\left(\sum_{t=0}^{p^{i-j}-1}
        1-d^{tp^j}\right).
    \end{align*}
    Now observe
    \begin{equation*}
        (\sigma-d)p^{i-j}T_d(j) = p^{i-j}\left(\sum_{k=0}^{p^j-1} \sigma^k-d^k\right).
    \end{equation*}
    We also have
    \begin{equation*}
        (\sigma-d)\left(\sum_{t=0}^{p^{i-j}-1} \frac{d^{tp^j}-1}{d-1}\right)
        Q_d(j,0) = \sum_{t=0}^{p^{i-j}-1} \frac{d^{tp^j}-1}{d-1} \left(\sigma^{p^j}-d^{p^j}\right),
    \end{equation*}
    so in $\Z G_j$,
    \begin{align*}
        (\sigma-d)\left(\sum_{t=0}^{p^{i-j}-1} \frac{d^{tp^j}-1}{d-1}\right)
        Q_d(j,0) &= \sum_{t=0}^{p^{i-j}-1} \frac{d^{tp^j}-1}{d-1} \left(1-d^{p^j}\right) \\
        &= \sum_{t=0}^{p^{i-j}-1} (d^{tp^j}-1)\left(\frac{1-d^{p^j}}{d-1}\right) \\
        &= \left(\sum_{t=0}^{p^{i-j}-1} 1-d^{tp^j}\right)\left(\sum_{k=0}^{p^{j}-1} d^k\right),
    \end{align*}
    as desired.
\end{proof}

\subsection{Previously established results on $\mathbf{a}$ and $d$}

In order to be a minimal norm pair, there are certain conditions that $\mathbf{a}$ and $d$ must satisfy. Though these conditions were motivated by field-theoretic concerns (namely, finding a minimal solution to (\ref{eq:defining.properties.for.exceptionalityIII})),  each plays a crucial role in establishing the indecomposability of $X$ in \cite{MSS2a}.  (In fact, there are additional conditions that must be satisfied when $p=2$ and $d \not\in U_2$ to ensure indecomposability; we will see that our assumptions on $\omega$ and $\nu$ when $p=2$ prevent us from falling into this case, and so we haven't included these additional conditions.)

\begin{proposition}\label{pr:minimality.conditions}
Suppose that $(\mathbf{a},d)$ is a minimal norm pair and that $(\alpha,\{\delta_i\}_{i=0}^m)$ represents $(\mathbf{a},d)$.  Suppose furthermore that if $p=2$ then $d \in U_2$. Then
    \begin{enumerate}\addtolength{\itemsep}{5pt}

        \item\label{it:exc.mod.indecom.condition...power.of.dIII} $d^{p^{a_i}}\in U_{i+1}$ for all $0\le i<m$;
        \item\label{it:exc.mod.indecom.condition...ai.vs.aj.inequalityIII}
          $a_i+j<a_{i+j}$ for all $0\le i<i+j<m$ and $a_{i+j}\neq -\infty$;
        \item\label{it:exc.mod.indecom.condition...lower.bound.based.on.UIII} if $d\in U_t\setminus U_{t+1}$ and $t+k<m$, then $a_{t+k} > k$ whenever $a_{t+k}\neq -\infty$; and
		\item\label{it:exc.module.indecom.condition...deltafree} the $\F_pG_{a_i}$-module $\langle [\delta_i]_1 \rangle$ is free for all $0 \leq i <m$ with $a_i \neq -\infty$.
    \end{enumerate}
\end{proposition}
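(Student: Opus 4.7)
My overall strategy is proof by contrapositive: assume that one of the four conditions fails for a fixed representation $(\alpha,\{\delta_i\})$ of $(\mathbf{a},d)$, and then refactor the tuple to produce a representation of a strictly smaller norm pair under the partial ordering defined in the introduction, contradicting the minimality of $(\mathbf{a},d)$.  The main tools are the polynomial identities collected above --- in particular the factorizations $(\sigma^{p^j}-1)P(i,j)=\sigma^{p^i}-1$ and $(\sigma-d)Q_d(i,0)=\sigma^{p^i}-d^{p^i}$ --- together with the annihilator computations in Lemmas \ref{le:kerbasicIII}, \ref{le:phidbIII}, and \ref{le:annzsmd}.  Throughout, the fact that $\delta_i\in K_{a_i}^\times$ translates in additive notation to the annihilation $(\sigma^{p^{a_i}}-1)\delta_i=0$ in $J_m$, and these annihilation relations drive each refactoring.

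For item (\ref{it:exc.mod.indecom.condition...power.of.dIII}), the case $a_i=-\infty$ is vacuous since $d^{p^{-\infty}}=d^0=1\in U_{i+1}$; otherwise apply $Q_d(a_i,0)$ to the additive form $(\sigma-d)\alpha=\sum_{k=0}^m p^k\delta_k$ of the first equation of (\ref{eq:defining.properties.for.exceptionalityIII}).  The left side becomes $(\sigma^{p^{a_i}}-d^{p^{a_i}})\alpha$, while on the right, using $\sigma^{p^{a_i}}\delta_i=\delta_i$, the $p^i\delta_i$-term contributes with a factor of $p$-adic valuation $v_p(1-d^{p^{a_i}})$.  If $d^{p^{a_i}}\notin U_{i+1}$, this valuation is at most $i$, so one can solve for $\delta_i$ modulo $p^m$-th powers in terms of $\alpha$, the $\delta_k$ for $k\neq i$, and elements fixed by $\sigma^{p^{a_i-1}}$; this strictly decreases $a_i$, contradicting minimality.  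Item (\ref{it:exc.module.indecom.condition...deltafree}) is handled in parallel: if $\langle [\delta_i]_1\rangle$ is not $\F_pG_{a_i}$-free, then by Lemma~\ref{le:kerbasicIII} the polynomial $P(a_i,a_i-1)$ annihilates $[\delta_i]_1$, which lifts to a mod-$p^m$ relation realizing $\delta_i$ (up to $p^m$-th powers) as an element of $K_{a_i-1}^\times$, again lowering $a_i$.

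For item (\ref{it:exc.mod.indecom.condition...ai.vs.aj.inequalityIII}), assume $a_{i+j}\neq-\infty$ and $a_{i+j}\le a_i+j$.  Here one applies a carefully chosen combination of the operators $P(a_i+j,a_i)$ and $\sigma^{p^{a_i+j}}-d^{p^{a_i+j}}$ to the first defining equation in order to isolate the $p^{i+j}\delta_{i+j}$-contribution; the annihilation $(\sigma^{p^{a_{i+j}}}-1)\delta_{i+j}=0$, combined with $a_{i+j}\le a_i+j$, then expresses this contribution in terms of elements living in a strictly smaller field, contradicting minimality in the $(i+j)$-th coordinate.  Item (\ref{it:exc.mod.indecom.condition...lower.bound.based.on.UIII}) is a direct corollary of (\ref{it:exc.mod.indecom.condition...power.of.dIII}) and Lemma~\ref{le:upowerIII}: under the standing hypothesis that $d\in U_2$ when $p=2$, if $d\in U_t\setminus U_{t+1}$ and $a_{t+k}\neq-\infty$, then Lemma~\ref{le:upowerIII} yields $d^{p^{a_{t+k}}}\in U_{t+a_{t+k}}\setminus U_{t+a_{t+k}+1}$, while (\ref{it:exc.mod.indecom.condition...power.of.dIII}) forces $d^{p^{a_{t+k}}}\in U_{t+k+1}$; hence $t+a_{t+k}\ge t+k+1$, i.e.\ $a_{t+k}>k$.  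The principal obstacle across (\ref{it:exc.mod.indecom.condition...power.of.dIII}), (\ref{it:exc.mod.indecom.condition...ai.vs.aj.inequalityIII}), and (\ref{it:exc.module.indecom.condition...deltafree}) is not the manipulation of the first defining equation but the verification that the \emph{second} (norm) equation of (\ref{eq:defining.properties.for.exceptionalityIII}) is preserved by each refactoring; this demands careful tracking of how $N_{K/F}(\alpha)^{(d-1)/p}$ and the individual terms $N_{K_{a_i}/F}(\delta_i)$ transform under substitution, invoking transitivity of norms together with the identities for $T_d(i)$ from Lemma~\ref{le:tdibar}, so that the refactored tuple is an honest representation of a strictly smaller norm pair rather than merely a candidate.
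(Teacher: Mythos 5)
Your derivation of item~(\ref{it:exc.mod.indecom.condition...lower.bound.based.on.UIII}) from item~(\ref{it:exc.mod.indecom.condition...power.of.dIII}) via Lemma~\ref{le:upowerIII} is valid, but note that the paper argues in exactly the opposite direction: it \emph{cites} items~(\ref{it:exc.mod.indecom.condition...ai.vs.aj.inequalityIII}), (\ref{it:exc.mod.indecom.condition...lower.bound.based.on.UIII}), (\ref{it:exc.module.indecom.condition...deltafree}) directly from the companion paper \cite{MSS2b} (Propositions 5.2(1), 5.2(2), 6.1) and then derives item~(\ref{it:exc.mod.indecom.condition...power.of.dIII}) from item~(\ref{it:exc.mod.indecom.condition...lower.bound.based.on.UIII}) in a few lines using Lemma~\ref{le:upowerIII}. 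Reversing the direction is logically fine, but it puts the entire burden on your from-scratch ``refactoring'' proofs of (\ref{it:exc.mod.indecom.condition...power.of.dIII}), (\ref{it:exc.mod.indecom.condition...ai.vs.aj.inequalityIII}) and (\ref{it:exc.module.indecom.condition...deltafree}), and those sketches do not hold up.

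Two concrete problems. First, in your argument for~(\ref{it:exc.mod.indecom.condition...power.of.dIII}) you apply $Q_d(a_i,0)$ to $(\sigma-d)\alpha=\sum_k p^k\delta_k$ and assert that the $k=i$ term acquires a factor of $p$-adic valuation $v_p(1-d^{p^{a_i}})$ because $\sigma^{p^{a_i}}\delta_i=\delta_i$. But $Q_d(a_i,0)=\sum_{l=0}^{p^{a_i}-1}d^{p^{a_i}-1-l}\sigma^l$ only involves powers $\sigma^l$ with $l<p^{a_i}$, none of which need fix $\delta_i$; the relation $\sigma^{p^{a_i}}\delta_i=\delta_i$ gives no simplification of $Q_d(a_i,0)\delta_i$. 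The operator that produces $1-d^{p^{a_i}}$ is $\sigma^{p^{a_i}}-d^{p^{a_i}}$ applied directly to $\delta_i$, which is not what you did. Second, in your argument for~(\ref{it:exc.module.indecom.condition...deltafree}) you claim that failure of $\F_pG_{a_i}$-freeness implies $P(a_i,a_i-1)$ annihilates $[\delta_i]_1$. That is false in general: writing $\F_pG_{a_i}\simeq\F_p[x]/(x^{p^{a_i}})$ with $x=\sigma-1$, non-freeness of a cyclic module is equivalent to annihilation by $x^{p^{a_i}-1}$, whereas $P(a_i,a_i-1)\equiv x^{p^{a_i}-p^{a_i-1}}\pmod p$ is a strictly lower power of $x$ once $a_i\ge 2$, so $P(a_i,a_i-1)[\delta_i]_1=0$ is a strictly stronger condition that need not follow. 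Lemma~\ref{le:kerbasicIII} computes $\ann(\sigma^{p^j}-1)$, not the annihilator of an arbitrary non-free cyclic generator, and does not supply the implication you want. Finally, you yourself flag that the crux of any refactoring argument is verifying that the norm equation in~(\ref{eq:defining.properties.for.exceptionalityIII}) is preserved under each substitution, and you defer that verification entirely; that is precisely the content delegated to \cite{MSS2b}, so the proposal as written does not constitute a proof.
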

\begin{proof}
(\ref{it:exc.mod.indecom.condition...ai.vs.aj.inequalityIII})-(\ref{it:exc.module.indecom.condition...deltafree}) are proven in (respectively) Proposition~5.2(1), Proposition~5.2(2), and Proposition~6.1 of \cite{MSS2b}.

To prove (\ref{it:exc.mod.indecom.condition...power.of.dIII}), note that if $d\in U_m$, we are done. Otherwise, $d\in U_t\setminus U_{t+1}$ for some $1\le t<m$. If $i<t$ then since $d\in U_t$ we have $d^{p^{a_i}}\in U_t \subset U_{i+1}$.  If $i\ge t$ then (\ref{it:exc.mod.indecom.condition...lower.bound.based.on.UIII}) gives $a_{i}\geq  i-t+1$, and by Lemma~\ref{le:upowerIII} we have $d^{p^{a_i}}\in U_{t+a_i} \subset U_{i+1}$, as desired.
\end{proof}


We also include two results which allow one to use the existence of a certain norm pair to prove the existence of another.  

\begin{lemma}\label{le:contracting.and.d.equivalence}
If $(\mathbf{a},d)$ is a norm pair of length $m$ and $\check d = d\pmod{p^m}$, then $(\mathbf{a},\check d)$ is a norm pair of length $m$ as well.  If $(\mathbf{a},d)$ is a minimal norm pair of length $m$ and $1 \leq s < m$, then $((a_0,\dots,a_{s-1}),d)$ is a minimal norm pair of length $s$.
\end{lemma}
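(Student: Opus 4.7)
The plan is to verify each claim by explicit manipulation of the defining relations~\eqref{eq:defining.properties.for.exceptionalityIII}.

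For the first claim, I would write $\check d = d + p^m k$ with $k\in\Z$ (so that $\check d\in U_1$ automatically) and, starting from a representation $(\alpha,\{\delta_i\}_{i=0}^m)$ of $(\mathbf{a},d)$, leave every datum unchanged except for $\delta_m$, which I would replace by $\check\delta_m := \delta_m \alpha^{-k}$. The twisted action equation transforms correctly because the extra $\alpha^{p^m k}$ produced by passing from $\alpha^d$ to $\alpha^{\check d}$ is exactly cancelled by the $\alpha^{-p^m k}$ contributed by $\check\delta_m^{p^m}$. The norm equation transforms correctly because the change $(d-1)/p \mapsto (\check d -1)/p = (d-1)/p + p^{m-1}k$ contributes a factor of $N_{K/F}(\alpha)^{p^{m-1}k}$, which is absorbed by the corresponding change $N_{K/F}(\check\delta_m)^{p^{m-1}} = N_{K/F}(\delta_m)^{p^{m-1}} N_{K/F}(\alpha)^{-p^{m-1}k}$.

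For the existence portion of the second claim, I would collapse the tail of the length-$m$ representation into a single element by setting $\alpha' = \alpha$, $\delta_i' = \delta_i$ for $0\le i<s$, and $\delta_s' := \delta_s \cdot \delta_{s+1}^p \cdots \delta_m^{p^{m-s}} \in K^\times$. Then $(\delta_s')^{p^s}$ recovers the full tail contribution $\prod_{i=s}^m \delta_i^{p^i}$, so the truncated twisted action holds; the parallel calculation $N_{K/F}(\delta_s')^{p^{s-1}} = \prod_{i=s}^m N_{K/F}(\delta_i)^{p^{i-1}}$ verifies the truncated norm condition, so $((a_0,\dots,a_{s-1}),d)$ is a length-$s$ norm pair.

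Minimality proceeds by contradiction. Suppose $(\mathbf{a}',d') < ((a_0,\dots,a_{s-1}),d)$ is a strictly smaller length-$s$ pair with representation $(\alpha',\{\delta_i'\}_{i=0}^s)$; I aim to lift it to a length-$m$ pair strictly below $(\mathbf{a},d)$. The simplest lift retains $\alpha',\delta_0',\dots,\delta_s'$, pads with $\delta_i''=1$ for $s<i\le m$, and declares the extended vector to be $\mathbf{b} = (a_0',\dots,a_{s-1}',n,-\infty,\dots,-\infty)$, which is legal because $\delta_s'\in K^\times = K_n^\times$. When $\mathbf{a}' <_{\mathrm{lex}} (a_0,\dots,a_{s-1})$, the vector $\mathbf{b}$ is automatically lex-smaller than $\mathbf{a}$, contradicting the minimality of $(\mathbf{a},d)$.

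The main obstacle is the complementary case $\mathbf{a}' = (a_0,\dots,a_{s-1})$ with strictly improved twist, where padding with $n$ at position $s$ can make $\mathbf{b}$ lex-larger than $\mathbf{a}$. Here my plan is to merge the two representations rather than naively pad: reuse $\delta_s,\dots,\delta_m$ from the original length-$m$ pair at positions $s,\dots,m$ so that the constraint $\delta_i''\in K_{a_i}^\times$ for $s\le i<m$ is automatic, and correct $\alpha'$ and the lower $\delta_i'$ by suitable multiplicative factors to absorb both the twist discrepancy $d-d'$ and the mismatch between $\delta_s'$ and $\delta_s \cdots \delta_m^{p^{m-s}}$. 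Because $v_p(d-1) < v_p(d'-1)$ and $v_p(d-1) < s \le m$, such a merger should yield a length-$m$ pair with vector $\mathbf{a}$ and strictly larger $\min\{v_p(d''-1),m\}$, again contradicting minimality; the technical crux is verifying that each corrected $\delta_i''$ remains in $K_{a_i}^\times$, for which the structural bounds on $\mathbf{a}$ recorded in Proposition~\ref{pr:minimality.conditions} are expected to be decisive.
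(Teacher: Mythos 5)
Your verification of the first claim and the existence half of the second claim is correct and complete: the substitution $\check\delta_m = \delta_m\alpha^{-k}$ handles the twist shift cleanly, and the collapse $\delta_s' = \delta_s\delta_{s+1}^p\cdots\delta_m^{p^{m-s}}$ truncates the length correctly. Your handling of the lex-smaller case in the minimality step (padding with $n$ at position $s$ and $-\infty$ beyond) is also sound, since an improvement at some position $j<s$ survives the padding.

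The gap is exactly where you flag it, and I do not think your ``merger'' resolves it. Suppose $\mathbf{a}' = (a_0,\dots,a_{s-1})$ with $\min\{v_p(d'-1),s\} > t := v_p(d-1)$ (so $t<s$). To contradict minimality of $(\mathbf{a},d)$ you must produce a length-$m$ pair that is strictly smaller than $(\mathbf{a},d)$, i.e.\ either a lex-smaller vector, or the vector $\mathbf{a}$ itself with a twist $d''$ satisfying $\min\{v_p(d''-1),m\} > t$. Your merge sets $\alpha'' = \alpha'\alpha^{-1}$ (or $\alpha\alpha'^{-1}$) and rewrites the twisted-action equation as
\begin{equation*}
(\alpha'')^{\sigma}/(\alpha'')^{d'} = \alpha^{\,p^t v}\cdot \prod_{i\le s}(\delta_i')^{p^i}\Big/\prod_{i\le m}\delta_i^{p^i}
\end{equation*}
with $v\in\Z$ prime to $p$. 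The factor $\alpha^{p^t v} = (\alpha^v)^{p^t}$ has to be absorbed into the slot of index $t$, but $\alpha^v\in K^\times = K_n^\times$ and there is no reason that $a_t = n$; Proposition~\ref{pr:minimality.conditions}(\ref{it:exc.mod.indecom.condition...lower.bound.based.on.UIII}) only gives $a_t>0$ (when $a_t\neq-\infty$), not $a_t = n$. So the corrected $\delta_t''$ lands in $K^\times$ rather than $K_{a_t}^\times$, the resulting vector has $n$ in position $t$, and since the first $t$ entries agree with $\mathbf{a}$ you get a vector that is lex-\emph{larger} than $\mathbf{a}$ whenever $a_t<n$, which is the generic case. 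You also cannot push $\alpha^{p^t v}$ to a higher slot without extracting a $p$th root of $\alpha^v$, which need not exist in $K^\times$. Nothing in the structural bounds of Proposition~\ref{pr:minimality.conditions} closes this, and the paper itself offers no proof here --- it cites Propositions~4.2 and 4.3 of \cite{MSS2b} verbatim --- so you would need to reproduce whatever argument appears there (or find a genuinely different one); the elementary twist-absorption route you sketch does not go through.
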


\begin{proof}
The first statement is \cite[Prop.~4.2]{MSS2b}.  The second statement is \cite[Prop.~4.3]{MSS2b}.
\end{proof}

\begin{lemma}[{\cite[Lem.~5.4]{MSS2b}}]\label{le:delicateIII}
    Let $m\ge 2$, and
    suppose  $((a_0,\cdots,a_{m-1}),d)$ is a norm pair
    of length $m$ so that for some $i$ with $0\le i<m-1$ and $a_i\neq -\infty$ we have $0<a_{m-1}-a_i\le m-1-i$.  Finally, suppose that either $p>2$, $a_i>0$, or $d\in U_2$. Then $((a_0,\dots,a_{m-2},-\infty),d)$ is a norm pair.
\end{lemma}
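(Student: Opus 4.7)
The plan is to modify a given tuple $(\alpha,\{\delta_j\}_{j=0}^m)$ representing $(\mathbf{a},d)$ into a new tuple $(\alpha',\delta_0',\dots,\delta_{m-2}',1,\delta_m')$ witnessing that $((a_0,\dots,a_{m-2},-\infty),d)$ is a norm pair. The guiding observation is that, since $0<a_{m-1}-a_i\le m-1-i$ with $a_i\neq -\infty$, the element $\delta_{m-1}\in K_{a_{m-1}}^\times$ lies in a subfield only a bounded distance above $K_{a_i}^\times$, and its $p^{m-1}$-th power contribution in (\ref{eq:defining.properties.for.exceptionalityIII}) can be absorbed into the $\delta_i^{p^i}$ slot via a modification of $\alpha$ by a polynomial expression in $\delta_{m-1}$.

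First I would seek $\lambda\in \Z G$ so that $\alpha'=\alpha\cdot \delta_{m-1}^{\lambda}$ produces $\alpha'^{\sigma-d}\equiv \alpha^{\sigma-d}\cdot \delta_{m-1}^{-p^{m-1}}\cdot \mu^{p^i}\pmod{K^{\times p^m}}$ for some $\mu\in K_{a_i}^\times$. The natural candidates for $\lambda$ are built from $Q_d(a_{m-1},0)$ and $Q_d(a_{m-1},a_i)$, exploiting the identity $(\sigma-d)Q_d(a_{m-1},0)=\sigma^{p^{a_{m-1}}}-d^{p^{a_{m-1}}}$ together with the fact that $\sigma^{p^{a_{m-1}}}$ fixes $\delta_{m-1}$, so that $(\sigma-d)Q_d(a_{m-1},0)\delta_{m-1} = (1-d^{p^{a_{m-1}}})\delta_{m-1}$. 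Proposition~\ref{pr:minimality.conditions}(\ref{it:exc.mod.indecom.condition...power.of.dIII}) forces $d^{p^{a_{m-1}}}\in U_m$, so this scalar has $p$-valuation at least $m$, leaving room to calibrate $\lambda$ (via shifts by lower-order $Q_d$-terms scaled by appropriate $p$-powers, using the bound $b\le m-1-i$ to place the correction into $(K_{a_i}^\times)^{p^i}$) so that it lands exactly at $-p^{m-1}\delta_{m-1}$ modulo $(K_{a_i}^\times)^{p^i}\cdot K^{\times p^m}$. Setting $\delta_i':=\delta_i\cdot\mu$, $\delta_{m-1}':=1$, and adjusting $\delta_m'$ to absorb any residual $p^m$-th power, the first equation of (\ref{eq:defining.properties.for.exceptionalityIII}) for the new tuple follows by direct substitution using the polynomial identities listed before Lemma~\ref{le:upowerIII}.

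The main obstacle will be preserving the second (norm) equation. The modifications introduce correction factors $N_{K/F}(\delta_{m-1}^{\lambda})^{(d-1)/p}$ from $\alpha'$ and $N_{K/F}(\mu)^{p^{i-1}}$ from $\delta_i'$, while the original $N_{K/F}(\delta_{m-1})^{p^{m-2}}$ contribution must vanish. To verify cancellation I would expand $N_{K/F}(\delta_{m-1}^{\lambda})=\delta_{m-1}^{P(n,0)\lambda}$ using the factorization $P(n,0)=P(n,a_{m-1})P(a_{m-1},0)$ to push the norm through $K_{a_{m-1}}$, then apply Lemma~\ref{le:tdibar} and its $Q_d$ analogue Lemma~\ref{le:qhomoIII} iteratively via $\Psi_j$ to track the $p$-adic valuation of each piece, cross-referencing with $N_{K/F}(\mu)$ via the factorization of $\mu$ through $K_{a_i}$. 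The extra hypothesis $p>2$, $a_i>0$, or $d\in U_2$ is needed precisely to control the $2$-adic anomaly of Lemma~\ref{le:upowerIII}: when $p=2$, $d\in U_1\setminus U_2$, and $a_i=0$, the behavior of $d^{p^j}$ deviates from the uniform case, producing a mismatched $2$-adic valuation that breaks cancellation; each of the three disjuncts rules this out (respectively by removing the prime, by allowing Lemma~\ref{le:qhomoIII} to be invoked cleanly, or by placing $d$ into the regular regime of Lemma~\ref{le:upowerIII}).
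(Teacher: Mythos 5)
This lemma is cited in the paper as {\cite[Lem.~5.4]{MSS2b}} with no proof supplied here, so there is no internal proof to compare against; I can only evaluate your sketch on its own terms. As written it is a plan rather than a proof, and it has (at least) two substantive gaps.

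First, you invoke Proposition~\ref{pr:minimality.conditions}(\ref{it:exc.mod.indecom.condition...power.of.dIII}) to conclude $d^{p^{a_{m-1}}}\in U_m$, but that proposition applies only to \emph{minimal} norm pairs, while the present lemma is stated for an arbitrary norm pair. This matters: in the paper's own use of the lemma (in the proof of Theorem~\ref{th:d.is.cyclotomic.character}(\ref{it:d.is.cyclo.character...any.d.works})), it is applied to the deliberately non-minimal pair $(\check{\mathbf{a}},d_s)$ with $\check{a}_{s-1}=n$, for which there is no a priori reason that $d^{p^{\check a_{s-1}}}\in U_s$. So this step is unjustified exactly in the situation the lemma is designed for.

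Second, and more fundamentally, the heart of the argument is never carried out. You need $\lambda$ with
$(\sigma-d)\lambda\,\delta_{m-1}\equiv -p^{m-1}\delta_{m-1}+p^i\mu \pmod{p^mK^\times}$ for some $\mu\in K_{a_i}^\times$, but you observe that $(\sigma-d)Q_d(a_{m-1},0)\delta_{m-1}=(1-d^{p^{a_{m-1}}})\delta_{m-1}$ has $p$-valuation at least $m$ and then claim this ``leaves room to calibrate.'' The logic runs the wrong way: a quantity that vanishes modulo $p^m$ cannot be scaled to produce the needed valuation-$(m-1)$ term, so $Q_d(a_{m-1},0)$ is doing no work and the unnamed ``lower-order $Q_d$-terms scaled by appropriate $p$-powers'' carry the entire burden without being exhibited. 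What one actually wants is to factor $p^{m-1}=p^{i+k}p^{a_{m-1}-a_i}$ using the hypothesis $a_{m-1}-a_i\le m-1-i$, write $p^{a_{m-1}-a_i}\delta_{m-1}=P(a_{m-1},a_i)\delta_{m-1}-(\sigma^{p^{a_i}}-1)\gamma$ with $P(a_{m-1},a_i)\delta_{m-1}=N_{K_{a_{m-1}}/K_{a_i}}(\delta_{m-1})\in K_{a_i}^\times$, then unwind $\sigma^{p^{a_i}}-1=(\sigma-d)Q_d(a_i,0)+(d^{p^{a_i}}-1)$ and iterate on the $(d^{p^{a_i}}-1)$-remainder, with the three disjuncts $p>2$, $a_i>0$, or $d\in U_2$ entering via the exceptional case of Lemma~\ref{le:upowerIII} to control the valuation of $d^{p^{a_i}}-1$ so the iteration terminates modulo $p^m$. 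Your sketch points at these lemmas but never carries out the iteration or tracks where the remainders land.

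Finally, the verification of the norm equation in~(\ref{eq:defining.properties.for.exceptionalityIII}) is purely aspirational. That equation is an exact identity in $F^\times$, not a congruence modulo $p^m$th powers, so ``adjusting $\delta_m'$ to absorb any residual $p^m$-th power'' is not automatic there; its preservation requires an explicit computation that the proposal only gestures at.
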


\section{Norm Pairs and the Cyclotomic Character}\label{se:np3}

The main goal of this section is to prove Theorem \ref{th:d.is.cyclotomic.character}.  

\subsection{Two special cases}

We start with two special cases: first when $\nu = \omega = \infty$, and second when $K/F$ is cyclotomic. It will turn out that the results we establish are sufficient to prove Theorem \ref{th:a.in.relation.to.norms} in these cases, though we won't formally do so until section \ref{se:proof1}.

\begin{proposition}\label{pr:nuinfty}
    Assume the hypotheses of Theorem \ref{th:d.is.cyclotomic.character}, and suppose that $\nu=\infty$. 
    Then the minimal norm pair of length $m$ for $K/F$ is $((-\infty,\dots,-\infty),1)$.
\end{proposition}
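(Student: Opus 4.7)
The plan is to exhibit an explicit representative for the pair $((-\infty,\ldots,-\infty),1)$ and to observe that such a pair already sits at the bottom of the partial order on norm pairs, making it automatically minimal.

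First I would upgrade the hypothesis $\nu=\infty$ to $\omega=\infty$. Because $K/F$ has finite degree $p^n$ and $\nu=\infty$ forces $F(\mu_{p^\infty})\subseteq K$, the extension $F(\mu_{p^\infty})/F$ is finite, which requires every $p$-power root of unity to already lie in $F$; hence $\omega=\infty$. (In the special case $p=2$, $n=1$ this also gives the required $-1=N_{K/F}(\xi_4)=\xi_4^2\in N_{K/F}(K^\times)$ for free.) Since $a_i=-\infty$ forces $\delta_i=1$ for $0\le i<m$, and $d=1$ kills the factor $N_{K/F}(\alpha)^{(d-1)/p}$, the defining equations of a norm pair collapse to
\begin{equation*}
    \alpha^\sigma=\alpha\cdot \delta_m^{p^m}, \qquad \xi_p=N_{K/F}(\delta_m)^{p^{m-1}}.
\end{equation*}

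I would then take $\delta_m\in F$ to be a primitive $p^{n+m}$-th root of unity whose $p^{n+m-1}$-th power is exactly the fixed generator $\xi_p$; such a $\delta_m$ exists because $\xi_p$ admits $p^{n+m-1}$-th roots inside $\mu_{p^{n+m}}\subseteq F$. Since $\delta_m\in F$, we have $N_{K/F}(\delta_m)=\delta_m^{p^n}$, and raising to the $p^{m-1}$ power yields $\delta_m^{p^{n+m-1}}=\xi_p$, verifying the norm equation. The first equation then asks for $\alpha\in K^\times$ with $\alpha^\sigma/\alpha=\delta_m^{p^m}$; since $\delta_m^{p^m}\in\mu_{p^n}\subseteq F$ has $K/F$ norm $\delta_m^{p^{n+m}}=1$, Hilbert 90 supplies such an $\alpha$.

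Finally, for minimality I would note that $(-\infty,\ldots,-\infty)$ is lex-smallest among all candidate $\mathbf{a}$'s and that $d=1$ saturates the cap $\min\{v_p(d-1),m\}=m$. Consequently $((-\infty,\ldots,-\infty),1)\le(\mathbf{a}',d')$ for every pair $(\mathbf{a}',d')$, so this pair is the global minimum of the partial order and therefore the minimal norm pair; every other minimal norm pair is equivalent to it via Lemma~\ref{le:contracting.and.d.equivalence}, taking the form $((-\infty,\ldots,-\infty),d')$ with $d'\equiv 1\pmod{p^m}$. The only delicate point in the argument is arranging that the $p^{n+m-1}$-th power of $\delta_m$ lands on the specific fixed $\xi_p$ rather than on an arbitrary primitive $p$-th root of unity; once this is done, the remainder is bookkeeping plus a single invocation of Hilbert 90.
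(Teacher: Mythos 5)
Your proof is correct and follows essentially the same line as the paper's: exhibit an explicit representative of $((-\infty,\dots,-\infty),1)$ and observe that this pair is the global minimum of the partial order. The paper constructs $\alpha$ first via Kummer theory (so that $\alpha^{\sigma-1}$ is a primitive $p^n$th root of unity) and then chooses $\delta_m$ compatibly; you choose $\delta_m$ first and recover $\alpha$ from Hilbert 90 applied to the norm-$1$ element $\delta_m^{p^m}$. These are the same mechanism (the relevant direction of Kummer theory is Hilbert 90), and your version has the minor advantage of being explicit about hitting the fixed generator $\xi_p$ rather than merely some primitive $p$th root of unity, a point the paper glosses over.

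One small imprecision: the assertion ``$F(\mu_{p^\infty})/F$ finite requires every $p$-power root of unity to lie in $F$'' is not literally true for $p=2$. Since $\mu_p\subseteq F$, the group $\Gal\left(F(\mu_{p^\infty})/F\right)$ embeds in $1+p\Z_p$ for $p>2$, which is torsion-free, so finiteness forces triviality and $\omega=\infty$. But for $p=2$ the ambient group is $\{\pm 1\}\times(1+4\Z_2)$, which has $\{\pm 1\}$ torsion; the nontrivial finite subgroup gives exactly $\omega=1$ with $\nu=\infty$. The argument still goes through because Theorem~\ref{th:d.is.cyclotomic.character} explicitly excludes $p=2$, $\omega=1<\nu$, and that is the only way the $p=2$ case could go wrong. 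You should cite that exclusion here rather than stating the implication as if it held unconditionally. The paper does exactly this, and once it is invoked the remainder of your argument, including the minimality via $v_p(1-1)=\infty$ saturating the cap at $m$, is airtight.
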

\begin{proof}[Proof of Proposition~\ref{pr:nuinfty}]
    Since $\nu=\infty$, there exists a primitive $p^i$th root of unity in $K$ for every $i$.  Moreover, since we have assumed that we are not in the case $p=2$ and $\omega=1<\nu$, it must be the case that $\omega = \infty$ as well. By Kummer theory $K=F(\root{p^n}\of{a})$ for some $a\in F^\times$.  Let $\alpha=\root{p^n}\of{a}$.  Then $(\sigma-1)\alpha=\xi_{p^n}$ for some primitive $p^n$th root of unity $\xi_{p^n}\in F^\times$.  Let $\delta_j=0$ for $0\le j<m$, and set $\delta_m=\xi_{p^{n+m}}$, where we choose $\xi_{p^{n+m}}$ such that $p^m \xi_{p^{n+m}} =\xi_{p^n}$.
    Then
    \begin{equation*}
      (\sigma-1)\alpha = p^m\delta_m
    \end{equation*}
    and
    \begin{equation*}
      p^{m-1}(N_{K/F}\delta_m) = p^{m-1}\xi_{p^m} = \xi_p
    \end{equation*}
    for some primitive $p$th root of unity $\xi_p$. Hence
    $(\alpha,\{\delta_i\})$ represents the norm pair
    $((-\infty,\dots,-\infty),1)$.  By minimality, we are done.
\end{proof}

Next we treat the cyclotomic case.  Before doing so, we prove the following

\begin{lemma}\label{le:norm.of.root.of.unity}
Let $\xi_{p^s}$ be a primitive $p^s$th root of unity in $K'$, where $K'/F'$ is an extension with $\Gal(K'/F') \simeq \Z/p\Z$.  If $p=2$ then assume either $s=1$ or that $F'$ contains a primitive $4$th root of unity $\xi_{4}$.  Then $N_{K'/F'}(\xi_{p^s}) = \xi_{p^{s-1}}$ for some primitive $p^{s-1}$st root of unity $\xi_{p^{s-1}}$.
\end{lemma}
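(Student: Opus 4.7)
The plan is to parametrize the norm in terms of how the generator $\tau$ of $\Gal(K'/F')$ acts on $\xi_{p^s}$. Because $\xi_{p^s}\in K'$ there is a unique $c\in(\Z/p^s\Z)^\times$ with $\tau(\xi_{p^s})=\xi_{p^s}^c$, and the relation $\tau^p=1$ forces $c^p\equiv 1\pmod{p^s}$. Thus $N_{K'/F'}(\xi_{p^s})=\xi_{p^s}^{S}$ with $S:=1+c+c^2+\cdots+c^{p-1}$, and the lemma reduces to verifying that $v_p(S\bmod p^s)=1$, so that $N_{K'/F'}(\xi_{p^s})$ has exact order $p^{s-1}$.

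If $c\equiv 1\pmod{p^s}$ (equivalently $\xi_{p^s}\in F'$) then $S=p$ exactly and the conclusion is immediate. Otherwise, for odd $p$ and $s\ge 2$ the subgroup of order $p$ in $(\Z/p^s\Z)^\times$ consists precisely of the elements $1+p^{s-1}x$, so $c=1+p^{s-1}x$ with $p\nmid x$. Then $c^j\equiv 1+jp^{s-1}x\pmod{p^s}$ (cf.\ Lemma~\ref{le:upowerpnot2III}), and
\begin{equation*}
S\equiv p+p^{s-1}x\cdot\tfrac{p(p-1)}{2}\equiv p\pmod{p^s},
\end{equation*}
since $(p-1)/2$ is an integer for odd $p$. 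The possibility $s=1$, $c\ne 1$ does not arise for $p>2$, because $[F'(\xi_p):F']$ divides $p-1$ and hence cannot equal $p$.

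The case $p=2$ is handled separately. When $s=1$, we have $\xi_2=-1\in F'$ and $N_{K'/F'}(-1)=1$ is the (unique) primitive $1$st root of unity. When $s\ge 2$ with $\xi_4\in F'$, the condition $\tau(\xi_4)=\xi_4$ forces $c\equiv 1\pmod 4$; combined with $c^2\equiv 1\pmod{2^s}$ this leaves only $c=1$ (the only option if $s=2$, returning us to the easy case) or $c=1+2^{s-1}$ with $s\ge 3$. In the latter, $S=2+2^{s-1}$ has $2$-adic valuation exactly $1$. In every case $\gcd(S,p^s)=p$, so the norm is a primitive $p^{s-1}$st root of unity, as claimed.

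The main obstacle is simply the bookkeeping for $p=2$: the hypothesis $\xi_4\in F'$ is precisely what rules out characters $c\not\in U_2$, which would otherwise inflate $v_p(S)$ past $1$ and send the norm into a proper subgroup of $\mu_{p^{s-1}}$. Beyond this, the argument is routine arithmetic in $(\Z/p^s\Z)^\times$.
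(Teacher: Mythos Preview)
Your proof is correct and follows essentially the same approach as the paper's: both parametrize the Galois action as $\tau(\xi_{p^s})=\xi_{p^s}^{c}$, write the norm as $\xi_{p^s}^{S}$ with $S=\sum_{j=0}^{p-1}c^{j}$, pin down $c\in U_{s-1}\setminus U_s$ (equivalently, the order-$p$ subgroup of $(\Z/p^s\Z)^\times$) from the Galois action on $\xi_{p^{s-1}}$, and then check that $v_p(S)=1$. The only cosmetic difference is that the paper invokes Lemma~\ref{le:upowerpnot2III} to evaluate $(e^p-1)/(e-1)$, whereas you carry out the binomial expansion directly.
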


\begin{proof}
First, observe that if $\xi_{p^s} \in F'$ then $N_{K'/F'}(\xi_{p^s}) = \xi_{p^s}^p$, and so the result follows.  Therefore suppose that $\xi_{p^s} \in K'\setminus F'$; in particular, if $p=2$ our hypotheses give $s>2$.  Since $K'/F'$ is a $p$-extension (and $\xi_4 \in F'$ if $p=2$), this forces $F'$ to contain all $p^{s-1}$st roots of unity.  If we write $\tau$ for a generator of $\Gal(K'/F')$ then there exists $e \in \Z\setminus\{1\}$ so that $\xi_{p^s}^\tau = \xi_{p^s}^e$.  We then have
$$N_{K'/F'}(\xi_{p^s}) = \left(\sum_{i=0}^{p-1} e^i\right)\xi_{p^s} = \left(\frac{e^p-1}{e-1}\right) \xi_{p^s}.$$ 

Because $p \xi_{p^{s}}\in F'$ we must have $$p\xi_{p^s}= \tau p\xi_{p^s} =p\tau \xi_{p^s} = pe \xi_{p^s},$$ and hence $e \in U_{s-1}$.  Because $\xi_{p^s} \not\in F'$, we must have $e \not\in U_s$.  
By Lemma \ref{le:upowerpnot2III}, when $p>2$ we have $\frac{e^p-1}{e-1} \in p + p^s\Z$, which gives the desired result.  On the other hand, when $p=2$ we have $\frac{e^2-1}{e-1} = e+1 \in 2 + 2^{s-1} + 2^s\Z$, and so 
$$N_{K'/F'}(\xi_{2^s}) = -1 + 2\xi_{2^s}.$$  This is still a primitive $2^{s-1}$st root of unity because $s>2$.
\end{proof}

\begin{proposition}\label{pr:cyclopair}
    Let $m \in \N$, and suppose that $K/F$ is cyclotomic; assume further that $\omega>1$ if $p=2$.  Let $d \in \Z$ be equivalent to the cyclotomic character modulo $p^{\min\{m,\nu\}}$.  Then $((-\infty,\dots,-\infty),d)$ is a minimal norm pair of length $m$.
\end{proposition}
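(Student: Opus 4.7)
The plan is to prove existence and minimality of the pair separately. Since $K/F$ is cyclotomic, $K = F(\xi_{p^\nu})$ with $\nu = \omega + n$ finite, and the cyclotomic character $d_0$ is characterized by $\xi_{p^\nu}^\sigma = \xi_{p^\nu}^{d_0}$, so $v_p(d_0 - 1) = \omega$. The central arithmetic input is that $\zeta_0 := N_{K/F}(\xi_{p^\nu})$ is a primitive $p^\omega$th root of unity in $F$, which I would establish by iterating Lemma~\ref{le:norm.of.root.of.unity} along the tower $K = K_n \supset K_{n-1} \supset \cdots \supset K_0 = F$; the hypothesis $\omega > 1$ when $p = 2$ ensures $\xi_4 \in K_{i-1}$ at each level, so the lemma applies throughout.

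For existence I would represent the norm pair by $\alpha = \xi_{p^\nu}^r$ (with $r$ to be chosen) and $\delta_i = 1$ for all $0 \leq i \leq m$. Whenever $d \equiv d_0 \pmod{p^\nu}$, the first identity in~(\ref{eq:defining.properties.for.exceptionalityIII}) is immediate because $\xi_{p^\nu}^{\sigma - d} = \xi_{p^\nu}^{d_0 - d} = 1$, and the second identity becomes $\xi_p = \zeta_0^{r(d-1)/p} = (\zeta_0^{p^{\omega-1}})^{ru}$ where $d - 1 = p^\omega u$ with $u$ a unit; since $\zeta_0^{p^{\omega-1}}$ is itself a primitive $p$th root of unity, $r$ can be chosen so this equals $\xi_p$. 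This directly covers the subcase $m > \nu$ (where the congruence on $d$ is only mod $p^\nu$); in the subcase $m \leq \nu$, it suffices to construct the pair for $d = d_0$ alone and then invoke Lemma~\ref{le:contracting.and.d.equivalence} to promote the result to any $d \equiv d_0 \pmod{p^m}$.

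For minimality, let $((-\infty, \dots, -\infty), d')$ be any norm pair represented by $(\alpha', \delta_m')$ (with the intermediate $\delta_i'$ all trivial); set $\beta = N_{K/F}(\alpha')$ and $\gamma = N_{K/F}(\delta_m')$ in $F^\times$. Applying $N_{K/F}$ to the first identity yields $\beta^{1-d'} = \gamma^{p^m}$, and the second identity reads $\xi_p = \beta^{(d'-1)/p}\gamma^{p^{m-1}}$. Writing $s = v_p(d'-1)$ and $d'-1 = p^s v$ with $v$ a unit, in the case $s \leq m$ I would introduce $\eta := \beta^v \gamma^{p^{m-s}}$ and verify $\eta^{p^s} = 1$ from the first identity, so $\eta \in \mu_{p^s} \cap F^\times$; substituting into the second identity collapses everything to $\xi_p = \eta^{p^{s-1}}$, which forces $\eta$ to have order exactly $p^s$ and hence $s \leq \omega$. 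In the case $s > m$, an analogous manipulation with $\zeta := \beta^{p^{s-m}v}\gamma \in \mu_{p^m} \cap F^\times$ yields $\xi_p = \zeta^{p^{m-1}}$ and forces $\omega \geq m$. Either way $\min(s,m) \leq \min(\omega,m)$, which is precisely the value realized by the pair constructed above, establishing minimality. The main obstacle is this last step: unifying the two separate constraints ($s \leq \omega$ when $s \leq m$, versus $\omega \geq m$ when $s > m$) into the single inequality $\min(v_p(d'-1),m) \leq \min(\omega,m)$ required by the ordering on norm pairs.
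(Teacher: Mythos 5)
Your proposal is correct and takes essentially the same approach as the paper: the representative is $\alpha$ a primitive $p^\nu$th root of unity (you scale by a unit power $r$ to hit the fixed $\xi_p$, whereas the paper tacitly allows $\xi_p$ to be any primitive $p$th root; these are the same move), with all $\delta_i$ trivial, and the arithmetic hinge is $N_{K/F}(\xi_{p^\nu})$ being a primitive $p^\omega$th root of unity obtained by iterating Lemma~\ref{le:norm.of.root.of.unity}. Your minimality argument is slightly more streamlined than the paper's: the paper dispatches $m\le\omega$ as trivially maximal and then, for $m>\omega$, assumes a competing pair with $\check d\in U_{\omega+1}$ and derives a contradiction by showing $\xi_p$ would be a $p^\omega$th power in $F^\times$; you instead derive the uniform inequality $\min\{v_p(d'-1),m\}\le\min\{\omega,m\}$ directly for any competing norm pair $((-\infty,\dots,-\infty),d')$ via the substitutions $\eta=\beta^v\gamma^{p^{m-s}}$ (when $s\le m$) and $\zeta=\beta^{p^{s-m}v}\gamma$ (when $s>m$). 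The computations check out, and your pair achieves the maximal value $\min\{\omega,m\}$, so minimality follows. The only gaps are cosmetic: you should note that $d'=1$ (so $s=\infty$) falls into the second case with $\zeta=\gamma$ directly, and you assert $v_p(d_0-1)=\omega$ without justification where the paper gives the short argument that raising $\sigma\xi_{p^\nu}=d_0\xi_{p^\nu}$ to the $p^n$ and $p^{n-1}$ powers forces $d_0\in U_\omega\setminus U_{\omega+1}$; neither affects the soundness of the proof.
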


\begin{proof}
First, observe that since we are not in the case $p=2$ and $\omega = 1 < \nu$, we must have $\omega+n = \nu<\infty$.  

    Let $\xi_{p^\nu}$ be a primitive $p^\nu$th root of unity, and set $\alpha=\xi_{p^{\nu}}$. Then there exists a cyclotomic character $\chi:G\to \Z/p^\nu\Z$ such that $g\xi_{p^\nu}=\chi(g)\xi_{p^\nu}$ for all $g\in G$.  Let $d\in \Z$ be any lift of $\chi(\sigma)$.  Then
    \begin{equation}\label{eq:rootof1}
        \sigma\alpha=d\alpha.
    \end{equation}

    Taking \eqref{eq:rootof1} to the power $p^{n}$ and recalling that $F$ contains all $p^\omega$th roots of unity by definition, it follows that $$p^n\xi_{p^\nu} = \sigma\left(p^n\xi_{p^{\nu}}\right)  = p^n\left(\sigma\xi_{p^{\nu}}\right) = dp^n\xi_{p^\nu}.$$ Hence $d\in U_{\nu-n} = U_\omega$.
    
    Now suppose that $d\in U_{\omega+1}$. Taking \eqref{eq:rootof1} to the power $p^{n-1}$, we see that $p^{n-1}\left(\xi_{p^\nu}\right)$ is a primitive $p^{\omega+1}$st root of unity which is fixed by $\sigma$. But then $F$ contains a primitive $p^{\omega+1}$st root of unity, contrary to the definition of $\omega$.  Hence $d\in U_{\omega}\setminus U_{\omega+1}$ and we may write $d=1+p^{\omega}x$ for $x\in \Z\setminus p\Z$.

    By Lemma \ref{le:norm.of.root.of.unity} we have $N_{K/F}(\xi_{p^{\nu}})=\xi_{p^{\omega}}$ for some primitive $p^{\omega}$th root of unity $\xi_{p^{\omega}}$.
	Then
    \begin{equation*}
        {\frac{d-1}{p}}N_{K/F}\alpha =
        {xp^{\omega-1}}\xi_{p^{\omega}} = \xi_p
    \end{equation*}
    for some primitive $p$th root of unity $\xi_p$. Hence $(\alpha,\{1,1,\dots, 1\})$ represents the norm pair $((-\infty,\dots,-\infty),d)$.  If $m\le \omega$ then $((-\infty,\dots,-\infty),d)$ is the smallest possible norm pair, and we are done.

    Now suppose that $m>\omega$ and there exists a smaller norm pair $((-\infty,\dots,-\infty),\check d)$ represented by $(\check\alpha, \{\check \delta_i\})$, where $\check \delta_i = 0$ for $0\le i<m$ since $a_i = -\infty$. 
    Then $\check d\in U_{\omega+1}$ and by definition of norm pair, for some $x\in \Z$ with $\check d = 1+p^{\omega+1}x$ we have \begin{equation*}
        {xp^{\omega}}(N_{K/F}\check\alpha) + {p^{m-1}}N_{K/F}(\check\delta_m)= \xi_p.
    \end{equation*}
    But the left-hand side is a $p^{\omega}$th power in $F^\times$, so that $F$ contains a primitive $p^{\omega+1}$st root of unity; this contradicts the definition of $\omega$.  Hence $((-\infty,\dots,-\infty), d)$ is a minimal norm pair.
\end{proof}

\subsection{Inequalities on $\mathbf{a}$ involving $\nu$}

Now we treat those cases when $K/F$ is not cyclotomic and for which $\nu<\infty$.  Our method will be to prove that the elements of $\mathbf{a}$ satisfy certain inequalities that involve $\omega$ and $\nu$, and then to use these inequalities to establish that $d$ is the cyclotomic character.

The following definition will be useful for phrasing some of our results.
\begin{definition*}
    Given $m\in \N$ and $\mathbf{a}\in \{-\infty,0,\dots,n\}^{m}$, for
    each $0\le i\le m$, the \emph{index $i^*$ of the last
    nonnegative entry} of $\mathbf{a}$ is defined to be
    \begin{equation*}
        i^* =
        \begin{cases}
            -\infty, & \text{if }a_j=-\infty \text{ for all } 0\le j\le i \\
            \max\{0\le j\le i:a_j\neq -\infty\}, & \text{otherwise.}
        \end{cases}
    \end{equation*}
\end{definition*}

The main result we establish in this section is the following

\begin{proposition}\label{pr:aiversusnu}
    Assume the hypotheses of Theorem \ref{th:d.is.cyclotomic.character}. Then for all $0 \leq i <m$ we have
    \begin{equation*}
        a_i\le n+i-\nu.
    \end{equation*}
    If $m>\nu$ and $K/F$ is not cyclotomic, equality is achieved if and only if $i=\nu^*$.
\end{proposition}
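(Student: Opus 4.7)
The plan is to prove the inequality by strong induction on $m$. The base case $m = 1$ reduces to $a_0 \le n - \nu$, which I would establish via the known identification $a_0 = i(K/F)$ from \cite[Prop.~4.8]{MSS2b}: iterating Lemma~\ref{le:norm.of.root.of.unity} along the tower $K \supset K_{n-1} \supset \cdots \supset K_{n-\nu+1}$ applied to $\xi_{p^\nu} \in K^\times$ yields $\xi_p = N_{K/K_{n-\nu+1}}(\xi_{p^\nu})$, which places $n-\nu$ in the set $\{s : \xi_p \in N_{K/K_{s+1}}(K^\times)\}$ whose minimum (or $-\infty$) equals $i(K/F)$.

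For the inductive step, let $(\mathbf{a},d)$ be a minimal norm pair of length $m \ge 2$ with representation $(\alpha, \{\delta_i\})$. Lemma~\ref{le:contracting.and.d.equivalence} gives that $((a_0,\dots,a_{m-2}),d)$ is a minimal norm pair of length $m-1$, so by induction $a_i \le n+i-\nu$ for $0 \le i \le m-2$. Assume toward contradiction that $a_{m-1} \ge n+m-\nu$; since $a_{m-1} \le n$ this forces $m \le \nu$. Then $[K : K_{a_{m-1}}] = p^{n-a_{m-1}} \le p^{\nu-m}$, and iterating Lemma~\ref{le:norm.of.root.of.unity} gives $N_{K/K_{a_{m-1}}}(\xi_{p^\nu}) = \xi_{p^{\nu-n+a_{m-1}}}$, so $\xi_{p^m} \in K_{a_{m-1}}^\times$.

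Having produced $\xi_{p^m}$ inside the relevant subfield, I would modify the representation by replacing $\delta_{m-1}$ with $\delta_{m-1}\cdot\xi_{p^m}^{-e}$ for a suitable integer $e$ (staying in $K_{a_{m-1}}^\times$), and compensating for the resulting $-e\xi_p$ discrepancy in the first defining equation by adjusting $\alpha$ using the identity $(\sigma-d)\xi_p = (1-d)\xi_p$; since $d\in U_1$, choosing $e$ divisible by the appropriate power of $p$ makes this absorption valid in $K^\times$. Iterating or combining this with free-module considerations from Proposition~\ref{pr:minimality.conditions}(\ref{it:exc.module.indecom.condition...deltafree}), the aim is to reach either a representation whose $a_{m-1}'$ is strictly smaller than $a_{m-1}$, or a representation satisfying the hypotheses of Lemma~\ref{le:delicateIII} (which then replaces $a_{m-1}$ with $-\infty$); either outcome contradicts the minimality of $(\mathbf{a},d)$.

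For the equality clause in the case $m > \nu$ with $K/F$ non-cyclotomic: by Proposition~\ref{pr:minimality.conditions}(\ref{it:exc.mod.indecom.condition...ai.vs.aj.inequalityIII}), if $i_1 < i_2 \le \nu$ with both $a_{i_k} \ne -\infty$, then $a_{i_2} > a_{i_1} + (i_2 - i_1)$, so the two cannot simultaneously satisfy $a_{i_k} = n + i_k - \nu$; thus equality is attained at most once, necessarily at the index $\nu^*$. Existence of $\nu^*$ (with $a_{\nu^*} = n + \nu^* - \nu$) is then verified by an explicit construction using $\xi_{p^\nu}$ in the spirit of Propositions~\ref{pr:nuinfty} and \ref{pr:cyclopair}: non-cyclotomicity of $K/F$ obstructs the fully trivial pair $(\mathbf{a},d) = ((-\infty,\dots,-\infty),d)$, forcing some $a_i$ to be nonnegative and the maximal-height choice saturates at $\nu^*$.

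The main obstacle is the construction in the inductive step: producing the strictly smaller norm pair requires delicate bookkeeping so that the root-of-unity adjustment to $\delta_{m-1}$ combines with compensating modifications to $\alpha$ (and possibly to $\delta_m$) in a way that preserves the field-of-definition constraints $\delta_i' \in K_{a_i'}^\times$ while genuinely shrinking $a_{m-1}$, rather than merely permuting the representation within the same norm pair.
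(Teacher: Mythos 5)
Your base-case reasoning is sound (and it is essentially the observation the paper uses to handle $a_0$ via $i(K/F)$), but the inductive step has a genuine gap that the proposal itself flags. From $a_{m-1} \ge n+m-\nu$ you correctly extract $\xi_{p^m} \in K_{a_{m-1}}^\times$, and then propose to replace $\delta_{m-1}$ by $\delta_{m-1}\cdot\xi_{p^m}^{-e}$ and compensate in $\alpha$. But this modification leaves the new $\delta_{m-1}$ in the very same subfield $K_{a_{m-1}}$, so it does not by itself yield a norm pair with strictly smaller $a_{m-1}$; and Lemma~\ref{le:delicateIII}, which you hope to reach, has a hypothesis ($0<a_{m-1}-a_i\le m-1-i$) that depends only on the vector $\mathbf{a}$, not on the chosen representative, so it cannot be ``reached'' by rechoosing $(\alpha,\{\delta_i\})$. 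There is also an arithmetic obstruction to the compensation itself: to absorb the spurious factor $\xi_p^{-e}$ into $\alpha$ you need $\eta\in K^\times$ with $\eta^{\sigma-d}=\xi_p^e$, but $\xi_p^{\sigma-d}=\xi_p^{1-d}$ and $1-d\equiv 0\pmod p$, so $(\xi_p^f)^{\sigma-d}$ always lies in $\mu_p^{p\Z}$; the absorption works only when $p\mid e$, in which case $\xi_{p^m}^{-e}$ is already a $p^{m-1}$th root of unity and the maneuver loses its force. So the key descent step is not established.

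The paper takes a different route entirely. It works directly with the norm equation, rewriting $P(n,0)=\phi_d(P(n,0))+(\sigma-d)T_d(n)$, applying Lemma~\ref{le:tdibar}, and obtaining equation (\ref{eq:aiversusnu}); the induction is then run in the auxiliary variable $s$ (not $m$): assuming $a_i\le n+i-s$ for all $i$, each coefficient in (\ref{eq:aiversusnu}) is shown divisible by $p^s$, and taking $p^{s-1}$th roots and reducing modulo $pK^\times$ forces, via Proposition~\ref{pr:minimality.conditions}(\ref{it:exc.module.indecom.condition...deltafree}), the strict inequality $a_i\le n+i-(s+1)$. The equality clause comes out of the same computation at $s=\nu$: if all inequalities were strict, the right side of (\ref{eq:aiversusnu}) would be a $p^\nu$th power in $K^\times$, putting $\xi_{p^{\nu+1}}$ into $K$ and contradicting the definition of $\nu$. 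Your observation that strict monotonicity of $j\mapsto j-a_j$ (Proposition~\ref{pr:minimality.conditions}(\ref{it:exc.mod.indecom.condition...ai.vs.aj.inequalityIII})) forces equality at exactly $\nu^*$ is correct and matches the paper, but the existence half (``an explicit construction in the spirit of Propositions~\ref{pr:nuinfty} and \ref{pr:cyclopair}'') is again left unexecuted and is not obviously compatible with both defining equations of a norm pair simultaneously.
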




We begin by proving part of Proposition~\ref{pr:aiversusnu} without so much machinery.

\begin{lemma}\label{le:dinuomega}
Suppose $m\in \N$ and $(\mathbf{a},d)$ is a minimal norm pair of length $m$.  If $\omega<\infty$ then $d\in U_{\min(\omega,m)}$, and for each $0\le i<\min(\omega,m)$ we have $$a_i\le n+i-\omega.$$ If $\nu=\omega<m$ then equality is achieved if and only if $i = \nu^*$.  If $\omega < \nu \leq m$, then for each $0\le i<\omega+1$ we have $a_i\le n+i-(\omega+1)$, but $d\not\in U_{\omega+1}$.  
    \end{lemma}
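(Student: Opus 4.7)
The argument will center on the second defining equation of a norm pair,
\begin{equation*}
\xi_p = N_{K/F}(\alpha)^{(d-1)/p} \cdot N_{K_{n-1}/F}(\delta_0) \cdot \prod_{j=1}^{m} N_{K/F}(\delta_j)^{p^{j-1}},
\end{equation*}
paired with the observation that $\xi_p \notin F^{\times p^\omega}$: a $p^\omega$th root of $\xi_p$ in $F^\times$ would be a primitive $p^{\omega+1}$st root of unity in $F$, violating the definition of $\omega$. Using the tower formula $N_{K/F}(\delta_j) = N_{K_{a_j}/F}(\delta_j)^{p^{n-a_j}}$ for $\delta_j \in K_{a_j}$, each factor on the right sits in a prescribed $p$-power subgroup of $F^\times$: $N_{K/F}(\delta_j)^{p^{j-1}} \in F^{\times p^{n+j-1-a_j}}$ for $1\le j<m$, $N_{K_{n-1}/F}(\delta_0) \in F^{\times p^{n-1-a_0}}$, $N_{K/F}(\delta_m)^{p^{m-1}} \in F^{\times p^{m-1}}$, and $N_{K/F}(\alpha)^{(d-1)/p} \in F^{\times p^{t-1}}$ when $d \in U_t \setminus U_{t+1}$.

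To prove $d \in U_{\min(\omega,m)}$ and $a_i \le n+i-\omega$ for $0 \le i < \min(\omega,m)$, I argue by minimality. Supposing the $\mathbf a$-bound fails, I choose $i$ smallest with $a_i > n+i-\omega$, truncate via Lemma~\ref{le:contracting.and.d.equivalence} to the minimal norm pair of length $i{+}1$, and exhibit a strictly smaller norm pair by absorbing $\delta_i$ into lower generators: since $\xi_{p^\omega} \in F$, multiplying $\delta_i$ by a suitable power of $\xi_{p^\omega}$ shifts it into a smaller subfield of the tower, and a compensating change of variables $\alpha \mapsto \alpha \cdot \gamma$ restores the first defining equation. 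The resulting pair is lexicographically smaller than $(\mathbf a, d)$, contradicting minimality. A parallel argument, perturbing $\alpha$ directly, gives the bound on $v_p(d-1)$.

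The remaining statements follow from the same framework. When $\omega < \nu \le m$, the subfield $F(\xi_{p^{\omega+1}}) = K_1$ sits inside $K$, so $\xi_{p^{\omega+1}} \in K^\times$ is available; an analogous argument with $\omega+1$ in place of $\omega$ (using $\xi_p \notin F^{\times p^{\omega+1}}$, justified for identical root-of-unity reasons) sharpens the bound to $a_i \le n+i-(\omega+1)$ for $0\le i<\omega+1$, and the failure $d \notin U_{\omega+1}$ follows because any such $d$ together with the sharpened $\mathbf a$-bound would force the right-hand side of the norm equation into $F^{\times p^{\omega+1}}$. When $\nu = \omega < m$, the equality $a_{\nu^*} = n+\nu^*-\omega$ is established by a saturating construction modelled on Proposition~\ref{pr:cyclopair}, placing $\xi_{p^\omega}$ at index $\nu^*$ as an explicit witness; any strictly smaller $\mathbf a$ would again push the right-hand side into $F^{\times p^\omega}$, contradicting $\xi_p \notin F^{\times p^\omega}$.

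The principal obstacle is executing the absorption step cleanly. The adjusted representatives must simultaneously preserve both equations of \eqref{eq:defining.properties.for.exceptionalityIII}, respect the field-of-definition constraints $\tilde\delta_j \in K_{\tilde a_j}$, and remain compatible with Proposition~\ref{pr:minimality.conditions}(\ref{it:exc.mod.indecom.condition...power.of.dIII}). A single modification generically perturbs several $\delta_j$ simultaneously and interacts with the $\sigma$-action in the first defining equation, so careful bookkeeping is required to guarantee that the new pair is both strictly smaller in the partial order and genuinely a norm pair.
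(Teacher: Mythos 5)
Your proposal does not succeed, and the central difficulty is the "absorption step" which you yourself flag at the end as unresolved.

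The specific problem is this: you claim that "since $\xi_{p^\omega} \in F$, multiplying $\delta_i$ by a suitable power of $\xi_{p^\omega}$ shifts it into a smaller subfield of the tower." But $\xi_{p^\omega} \in F \subseteq K_{a_i}$, so $\delta_i \cdot \xi_{p^\omega}^k$ lies in exactly the same field $K_{a_i}$ that $\delta_i$ did; multiplying by a root of unity already present in the base field cannot lower the field of definition. This means the mechanism you propose for producing a smaller norm vector is dead on arrival. Worse, even granting some modification, the partial order on norm pairs is lexicographic on $\mathbf{a}$, so decreasing $a_i$ for $i$ minimal with $a_i > n+i-\omega$ only helps if you can control all earlier entries simultaneously — and a change of variables $\alpha \mapsto \alpha\gamma$ in the first equation of \eqref{eq:defining.properties.for.exceptionalityIII} pushes $(\sigma-d)\gamma$ into the sum $\sum p^j\delta_j$, perturbing every $\delta_j$, not just $\delta_i$. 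You acknowledge this cascade yourself but offer no way to tame it.

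The paper's actual proof is structurally quite different and avoids constructing competitor norm pairs altogether. It inducts on $s$ from $1$ to $\omega$, at each stage extracting a $p^{s-1}$st root of the norm equation to produce a relation $\xi_{p^s} = \tfrac{d-1}{p^s}N_{K/F}(\alpha) + \sum_i p^{n-s+i-a_i}N_{K_{a_i}/F}(\delta_i)$, then reducing modulo $K^{\times p}$ and modulo $pF^\times$. The two inputs that close the induction — and that your proposal never invokes — are \cite[Prop.~4.7]{MSS2b} (giving $N_{K/F}(\alpha) \in pK^\times$ while $\tfrac{1}{p}N_{K/F}(\alpha)$ is not $(\sigma-1)$-annihilated) and Proposition~\ref{pr:minimality.conditions}(\ref{it:exc.module.indecom.condition...deltafree}) (freeness of $\langle[\delta_i]_1\rangle$ over $\F_pG_{a_i}$). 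The first rules out $d\not\in U_{s+1}$; the second rules out $n-s+i-a_i = 0$. Without these structural facts about a \emph{minimal} representative, the $p$-valuation bookkeeping you set up in your first paragraph does not by itself force the bounds: it only records where each term lives, not why the strict inequalities must hold. The $\nu=\omega$ equality case and the $\omega<\nu$ sharpening in the paper are then quick corollaries of the same root-extraction mechanism, not separate "saturating constructions."

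To salvage a proof along your minimality-by-construction line, you would need an explicit recipe that simultaneously (i) lowers the field of definition of some $\delta_i$, (ii) preserves both equations of \eqref{eq:defining.properties.for.exceptionalityIII}, and (iii) does not increase any earlier $a_j$. I don't see how to do this without effectively re-deriving the two lemmas above, at which point the paper's inductive root-extraction argument is shorter. I would recommend abandoning the absorption strategy and instead studying how Proposition~\ref{pr:minimality.conditions}(\ref{it:exc.module.indecom.condition...deltafree}) and \cite[Prop.~4.7]{MSS2b} interact with the reduction of the norm equation modulo $K^{\times p}$.
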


\begin{proof}
    We begin by assuming that $m\ge \omega$, for if $m<\omega$ then we claim that we can recover the desired result by an application of Lemma \ref{le:contracting.and.d.equivalence}.  Indeed, if $m<\omega$ and we assume that $((\hat a_0,\cdots,\hat a_{\omega-1}),\hat d)$ is a minimal norm pair of length $\omega$ that satisfies the desired result, then observe that Lemma \ref{le:contracting.and.d.equivalence} tells us that $((\hat a_0,\cdots,\hat a_{m-1}),\hat d)$ is a minimal norm pair of length $m$ as well.  Applying the definition of minimality, this means that $\hat a_i = a_i$ for all $0 \leq i \leq m-1$, and furthermore that $\min\{v_p(\hat d-1),m\} = \min\{v_p(d-1),m\}$.  Since we assume the desired result for $\hat a_i$, we have $a_i = \hat a_i \leq n+i-\omega$; likewise since we assume $\hat d \in U_\omega$, we get $d \in U_m$.
    
   Hence we assume that $m \geq \omega$.  We prove the result by induction on $s$, showing that for all $1 \leq s \leq \omega$  we have $d\in U_s$, and for any $0 \leq i < m$ we have $a_i\le n+i-s$. For the base case $s=1$, note first that $d \in U_1$ by the definition of the norm pair.  In a similar way, we have $a_0 \leq n-1$ by definition, and also that each $a_i \leq n$ for all $1 \leq i < m$.  Hence the base case is settled.  If $\omega=1$ we are done; otherwise assume that for some $s$ satisfying $1\le s<\omega$ we have $d\in U_s$ and $a_i\le n+i-s$ for all $0 \leq i < m$.  We will now show that $d \in U_{s+1}$, and that $a_i \leq n+i-(s+1)$ for all $0 \leq i < m$.

    By the definition of norm pair,
    \begin{align}\label{eq:norm.pair.equation.in.proof.of.dinuomega}
        \xi_p &= \frac{d-1}{p} N_{K/F}(\alpha)+ N_{K_{n-1}/F}(\delta_0)+\sum_{i=1}^m p^{i-1} N_{K/F}(\delta_i)
        \\&= \frac{d-1}{p} N_{K/F}(\alpha)+ \sum_{i=0}^{m} p^{n-1+i-a_i} N_{K_{a_i}/F}(\delta_i),
    \end{align}
     where here we have defined $a_m =n$.  We claim that the right hand side is a $p^{s-1}$st power. Note first that $n-1+m-a_m= m-1\ge \omega-1>s-1$.  In a similar way, for each $0 \leq i < m$ we can rewrite our inductive assumption to recover $n-1+i-a_i\ge s-1$.  Finally, since $d\in U_s$ we have that $p^{s-1}$ divides $\frac{d-1}{p}$.  As a result we may take $p^{s-1}$th roots of both sides. For some primitive $p^s$-th root of unity $\xi_{p^s}$, then, we have
    \begin{equation}\label{eq:higher.root.norm.pair.equation}
        \xi_{p^s} = \frac{d-1}{p^s} N_{K/F}(\alpha)+ \sum_{i=0}^m p^{n-s+i-a_i}N_{K_{a_i}/F}(\delta_i).
    \end{equation}

    We consider (\ref{eq:higher.root.norm.pair.equation}) modulo $K^{\times p}$. The left-hand side is trivial since $s<\omega$.  Because $n-s+m-a_m>0$, we may ignore the $N_{K_{a_m}/F}(\delta_m)$ factor.  We already know that for $0\le i<m$ we have $n-s+i-a_i\ge 0$, so suppose that there exists $i$ with $0\le i<m$ such that $n-s+i-a_i=0$.  Now Proposition \ref{pr:minimality.conditions}(\ref{it:exc.mod.indecom.condition...ai.vs.aj.inequalityIII}) tells us that $\{j-a_j: a_j \neq -\infty\}$ is a strictly decreasing set, so if $n-s+i-a_i=0$ then for all $j<i$ we have $n-s+j-a_j>0$, whereas for all $i<j$ we have $a_j=-\infty$.  We therefore have
    \begin{equation*}
        \frac{d-1}{p^s} N_{K/F}(\alpha)+ N_{K/F}(\delta_{i}) \in
        pK^{\times}.
    \end{equation*}
    By \cite[Prop.~4.7]{MSS2b}, $N_{K/F}(\alpha)\in pK^{\times}$ and so $N_{K/F}(\delta_i)\in pK^{\times}$.  We obtain that  $\langle [\delta_{i}]_1\rangle$ is not a free $\Fp G_{a_{i}}$-module in $J_1$, contradicting Proposition~\ref{pr:minimality.conditions}(\ref{it:exc.module.indecom.condition...deltafree}). Hence for all $i$ with $0\le i<m$, we have $n-s+i-a_i>0$, or $a_i \le n-(s+1)+i$.

    Considering (\ref{eq:higher.root.norm.pair.equation}) modulo $pF^{\times}$, we obtain
    \begin{equation*}
        \frac{d-1}{p^s}N_{K/F}(\alpha)\in  pF^{\times}.
    \end{equation*}
    If $p$ does not divide $\frac{d-1}{p^s}$, then $\frac{1}{p}N_{K/F}(\alpha)\in F^\times$ and is annihilated by $\sigma-1$, contradicting the second statement of \cite[Prop.~4.7]{MSS2b}.  Hence $d\in U_{s+1}$, as desired, and our
    induction is complete.

    Now assume that $\nu=\omega$.  Since $N_{K/F}(\alpha)\in K^{\times p}$ and $d\in U_\omega$ we can in fact take a $p^\omega$-th root of $\frac{d-1}{p}N_{K/F}(\alpha)$ in $K^\times$.  If $m>\omega$ and for each $i$ with $0\le i<m$ we have $a_i<n+i-\omega$, then we would be able to
    take a $p^\omega$-th root of $\xi_p$ in $K^\times$ in (\ref{eq:norm.pair.equation.in.proof.of.dinuomega}), which is a contradiction.  Hence $a_i = n+i-\omega$ for some $0 \leq i <m$.  Notice that because $\{j-a_j: a_j \neq -\infty\}$ is strictly decreasing, we can have equality $a_i = n+i-\omega$ only at $i = (m-1)^*$; since equality for $i>\omega=\nu$ is impossible, we therefore have equality precisely at $i = \nu^*$.

    If $\nu>\omega$, then apply the same inductive argument with $m\ge \omega+1$ and reaching $s=\omega$ to obtain that for all $i$ with $0\le i<m$, we have $a_i\le n+(\omega+1)-i$.  The  equation
    \begin{equation*}
        \xi_{p^\omega} = \frac{d-1}{p^\omega} N_{K/F}(\alpha) + \sum_{i=0}^m p^{n-\omega+i-a_i} N_{K_{a_i}/F}(\delta_i).
    \end{equation*}
    modulo $pF^{\times}$ is then
    \begin{equation*}
        \xi_{p^\omega} = \frac{d-1}{p^\omega} N_{K/F}(\alpha) \quad
        \mod pF^{\times}.
    \end{equation*}
    By definition of $\omega$, the left-hand side is not trivial modulo $pF^{\times}$, and as above, by the second statement of \cite[Prop.~4.7]{MSS2b}, we know that $N_{K/F}(\alpha)\not\in pF^{\times}$.  Hence it cannot be the case that $p$ divides $\frac{d-1}{p^\omega}$, and we conclude that $d\not\in U_{\omega+1}$.
\end{proof}

\begin{proof}[Proof of Proposition~\ref{pr:aiversusnu}]
    The case $\nu=\infty$ is a consequence of Proposition~\ref{pr:nuinfty}, and the case $\omega = \nu$ was handled in Lemma \ref{le:dinuomega}.  The cyclotomic case was handled in Proposition \ref{pr:cyclopair}. Assume therefore that $\nu<\infty$; since we assume that we are not in the case $p = 2$ and $\omega = 1<\nu$, we may also assume $\omega<\nu <\omega+n$ and that $\omega \geq 2$ when $p=2$.  
    
    By Lemma \ref{le:contracting.and.d.equivalence} we may assume without loss that $m$ is greater than $\nu$.  We claim first that for all $1\le s\le \nu$ and $0\le i<m$, we have $a_i\le n+i-s$; we proceed by induction on $s$.

    The base case $s=1$ follows directly from the definition of norm pair.  Assume then that for some $1\le s<\nu$ we have $a_i\le n+i-s$ for all $0\le i<m$.

    Recalling that $N_{K_j/F}$ and $P(j,0)$ act identically, and $N_{K/F}$ acts as $p^{n-j}P(j,0)$ on $K_j^\times$, we have by definition of norm pair,
    \begin{equation*}
        \xi_p = \frac{d-1}{p}P(n,0)\alpha + \sum_{i=0}^m
        p^{n+i-1-a_i}P(a_i,0)\delta_i.
    \end{equation*}

    Observe that $P(n,0)=\phi_d(P(n,0)) + (\sigma-d)T_d(n)$.
    When $d\neq 1$, $\phi_d(P(n,0))=\frac{d^{p^n}-1}{d-1}$.  Since by definition of norm pair, $(\sigma-d)\alpha=
    \sum_{i=0}^{m} p^i\delta_i$, we obtain for all $d$
    \begin{equation*}
        \xi_p = \frac{d^{p^n}-1}{p}\alpha + \sum_{i=0}^m \left(\frac{d-1}{p}p^iT_d(n) + p^{n+i-1-a_i}P(a_i,0)\right)\delta_i.
    \end{equation*}

    Observe that we cannot be in the case $d = -1$, since for $p>2$ we have $-1 \not\in U_1$, and if $p=2$ then we have assumed we are in the case where $\omega \geq 2$, and by Lemma \ref{le:dinuomega} we have $d \in U_\omega \subseteq U_2$.  We can therefore apply Lemma~\ref{le:tdibar} to the previous equation with $a_i$ in place of $j$ to deduce 
    \begin{equation}\label{eq:aiversusnu}
        \xi_p = 
        	\begin{cases}
	        	\displaystyle \sum_{i=0}^m \left(p^{n+i-1-a_i}P(a_i,0)\right)\delta_i, &d=1\\
        	\frac{d^{p^n}-1}{p}\alpha +\displaystyle \sum_{i=0}^m \left( a T_d(a_i) +  b Q_d(a_i,0) + c P(a_i,0)\right)\delta_i,&d \neq 1
			\end{cases}
    \end{equation}
    where $$a = \frac{d-1}{p} p^{n+i-a_i}, \quad b = \frac{d-1}{p}p^i \left(\sum_{k=0}^{p^{n-a_i}-1} \frac{d^{kp^{a_i}}-1}{d-1}\right), \quad c = p^{n+i-1-a_i}.$$

    We examine the terms of equation (\ref{eq:aiversusnu}) when $d \neq 1$, showing that each is divisible by $p^s$.  First, consider the coefficient of $\alpha$.  Since we are either in the case $p>2$ or $p=2$ and $d \in U_\omega \subseteq U_2$, Lemmas~\ref{le:upowerIII} and \ref{le:dinuomega} give $d^{p^n}\in U_{\omega+n}$, and since $\nu< \omega + n$ we therefore have $d^{p^n} \subseteq U_{\nu+1}$. Since $s<\nu$, we therefore have $p^s$ divides $\frac{d^{p^n}-1}{p}$. (Obviously we have the stronger statement that $p^\nu$ divides $\frac{d^{p^n}-1}{p}$; we will use this fact later in the proof.)

	Now consider the coefficient of $T_d(a_i)$. Since $d \in U_\omega$ by Lemma \ref{le:dinuomega}, this coefficient is divisible by $p^{\omega-1+n+i-a_i}$.  Since by assumption $a_i\le n+i-s$, the coefficient is divisible by $p^{\omega-1+s}$ and hence divisible by $p^s$.

    Now we turn to the coefficient of $Q_d(a_i)$ when $p>2$. We have $$\left(\sum_{k=0}^{p^{n-a_i}-1} \frac{d^{kp^{a_i}}-1}{d-1}\right)=\frac{\frac{d^{p^n}-1}{d^{p^{a_i}}-1}-p^{n-a_i}}{d-1}.$$
    Now write $d=1+p^\omega u$. Then by Lemma~\ref{le:upowerpnot2III}, $d^{p^{a_i}}=1+p^{\omega+a_i}uv$ for some $v\in U_\omega$ 
    and $d^{p^n}=1+p^{\omega+n}uvw$ for some $w\in U_{\omega+a_i}$ of the form $w=1+fuvp^{\omega+a_i}+guvp^{\omega+a_i+1}$ with $f,g\in \Z$
    .  Our fraction is
    \begin{align*}
        \frac{\frac{p^{\omega+n}uvw}{p^{\omega+a_i}uv}-p^{n-a_i}}{p^\omega u} &= \frac{p^{n-a_i}w-p^{n-a_i}}{p^{\omega}u} = p^{n-\omega-a_i}\frac{w-1}{u}\\
        &= p^{n}\frac{fuv+puvg}{u}=
        p^n(fv+pvg).
    \end{align*}
    We conclude that the coefficient of $Q_d(a_i)$ is divisible by $p^{\omega+n-1+i}$.  Since $s<\nu< \omega+n$, we have $s\le \nu-1< \omega+n-1+i$, and so the coefficient is divisible by $p^{s}$.

    If instead $p=2$, we consider the coefficient of $Q_d(a_i)$ as follows.  As before, we have $$\left(\sum_{k=0}^{2^{n-a_i}-1} \frac{d^{k2^{a_i}}-1}{d-1}\right)=
        \frac{\frac{2^{p^n}-1}{2^{p^{a_i}}-1}-2^{n-a_i}}{d-1}.$$    Now write $d=1+2^\omega u$.  Then by Lemma~\ref{le:upowerpnot2III}, $d^{2^{a_i}}=1+2^{\omega+a_i}uv$ for $v\in U_{\omega-1}$ and
    $d^{2^n}=1+2^{\omega+n}uvw$ for $w\in U_{\omega+a_i-1}$
    of the form $1+2^{\omega+a_i-1}uv+2^{\omega+a_i}uve$ for $e\in \Z$.  Our fraction is
    \begin{align*}
        \frac{\frac{2^{\omega+n}uvw}{2^{\omega+a_i}uv}-2^{n-a_i}}{2^\omega u} &= \frac{2^{n-a_i}w-2^{n-a_i}}{2^{\omega}u} = 2^{n-\omega-a_i}\frac{w-1}{u}\\
        &= 2^{n-1}\frac{uv+2uve}{u}=
        2^{n-1}(v+2ve).
    \end{align*}
    We conclude that the coefficient of $Q_d(a_i)$ is divisible by $2^{\omega+n-2+i}$.  Since  $s<\nu< \omega+n$, we have $s\le \nu-1\le \omega+n-2+i$, and so the coefficient is divisible by $2^{s}$.

	Now consider equation \eqref{eq:aiversusnu} modulo $p^sK^\times$.  By the foregoing discussion, the right hand side is
    \begin{equation*}
        \sum_{i=0}^m p^{n+i-1-a_i}P(a_i,0)\delta_i \quad \mod p^sK^\times.
    \end{equation*}
    By assumption, for each $i<m$ we have $a_i\le n+i-s$, and so $n+i-1-a_i\ge s-1$.  Moreover, when $a_m \neq -\infty$ we have $n+m-1-a_m\ge m-1\ge s$, and so we may ignore this term.

    Suppose that there exists $i$ with $0\le i<m$ satisfying $n+i-1-a_i=s-1$.  By Proposition \ref{pr:minimality.conditions}(\ref{it:exc.mod.indecom.condition...ai.vs.aj.inequalityIII}) the set $\{j-a_j:a_j \neq -\infty\}$ is strictly decreasing, so if $n+i-s-a_i=0$ then for all $j<i$ we have $n+j-s-a_j>0$, whereas for all $i<j$ we have $a_j = -\infty$.  Taking $p^{s-1}$st roots of unity on both sides, we therefore obtain that 
    \begin{equation*}
        \xi_{p^s} =
        P(a_i,0)\delta_i + pk
    \end{equation*}
    for some primitive $p^s$th root of unity $\xi_{p^s}$ and some $k\in K^\times$.  Since $s<\nu$, the left-hand side lies in $pK^\times$.  Since $P(a_i,0) \neq 0 \in \F_pG_{a_i}$, we conclude that $\langle [\delta_i]_1 \rangle_{\F_pG_{a_i}}$ is not free, contradicting Proposition~\ref{pr:minimality.conditions}(\ref{it:exc.module.indecom.condition...deltafree}).

    For the last statement of the proposition, suppose to the contrary that
    $$a_i<n+i-\nu$$ for each $0\le i<m$.  Notice that if $a_m \neq -\infty$ then we also have $a_m \leq n < n+m-\nu$, so this inequality also includes $i=m$.  Momentarily we will prove that we can extract a $p^\nu$th root of the right hand side of equation (\ref{eq:aiversusnu}), and hence $K^\times$ contains a primitive $p^{\nu+1}$st root of unity, a contradiction. We will therefore have $a_i = n+i-\nu$ for some $0 \leq i < m$.  Since $\{j-a_j: a_j \neq -\infty\}$ is strictly decreasing, equality can happen only for $i = (m-1)^*$.  Since equality is impossible for $i>\nu$, this implies equality occurs precisely at $i = \nu^*$.

Now we justify our assertion that the right side of equation (\ref{eq:aiversusnu}) is a $p^\nu$th power.  First, recall that we have already shown that the coefficient of $\alpha$ in (\ref{eq:aiversusnu}) is divisible by $p^\nu$.  We also showed that the coefficient of $T_d(a_i)$ is divisible by $p^{\omega+n-1+i-a_i}$, and since we're assuming that $a_i<n+i-\nu$ we see that this coefficient is divisible by $p^\nu$.  Obviously the coefficient of $P(a_i,0)$ is divisible by $p^\nu$ by our assumption.  Finally, when $p>2$ we have shown that the coefficient of $Q_d(a_i)$ is divisible by $p^{\omega +n-1+i}$, and hence by $p^\nu$ since we have $\nu < \omega +n$.  When $p=2$, on the other hand, we have shown that $Q_d(a_i)$ is divisible by $p^{\omega+n-2+i}$.  Hence the coefficient of $Q_d(a_i)$ is divisible by $p^\nu$ if $\nu < \omega +n-1$, or if $\nu = \omega + n-1$ and $i>0$.  

We argue now that if $\nu = \omega+n-1$, then in fact $a_0 = -\infty$, so that we still reach the desired result.  To see this, one uses Lemma \ref{le:norm.of.root.of.unity} to argue that $N_{K/F}\left(\xi_{2^\nu}\right) = \xi_{2^{\omega-1}}$.  In particular we have $-1 = N_{K/F}(\beta)$ for some $\beta \in K^\times$; squaring this and applying Hilbert's Theorem 90 gives $\alpha \in K^\times$ such that $\alpha^{\sigma-1} = \beta^2$.  By setting $\delta_0 = 1$ and $\delta_1 = \beta$, we see that $(\alpha,\{\delta_0,\delta_1\})$ represents the norm pair $((-\infty),1)$ of length $1$.  Since $((a_0),d)$ is a minimal norm pair of length $1$ by Lemma \ref{le:contracting.and.d.equivalence}, we therefore have $a_0 = -\infty$ as desired.
	\end{proof}

\subsection{Proof of Theorem \ref{th:d.is.cyclotomic.character}}

	We begin by proving (\ref{it:d.is.cyclo.character...any.d.works}).  So suppose $\omega<\nu$, and let $\tilde d \equiv d \pmod{p^\nu}$. If $m\le \nu$ then $(\mathbf{a},\tilde d)$ is a norm pair by Lemma \ref{le:contracting.and.d.equivalence}; it must be minimal since $$\min\{v_p(d-1),m\} = \min\{\omega,m\} = \min\{v_p(\tilde d-1),m\}.$$ Hence we assume that $\nu<m$.

    Suppose first that $K/F$ is cyclotomic.  Then as in the proof of
    Proposition \ref{pr:cyclopair}, $(\xi_{p^{\nu}},\{1,1,\dots,1\})$
    represents the minimal norm pair $((-\infty,\dots,-\infty),d)$, where $d$
    satisfies $(\sigma-d)\xi_{p^\nu}=0$.  
    Then
    $$(\sigma-\tilde d)\xi_{p^{\nu}}=(\sigma-d)\xi_{p^\nu} + (d-\tilde d)  \xi_{p^\nu} = 0,$$ and so
    $(\xi_{p^\nu},\{1,1,\dots,1\})$ represents $((-\infty,\dots,-\infty),\tilde d)$ as well. Moreover,
    $v_p(d-1)=\omega=v_p(\tilde d-1)$ since $\omega<\nu$.  Hence $((-\infty,\dots,-\infty),\tilde d)$ is a minimal norm pair.  We assume for the
    remainder of the proof that $K/F$ is not cyclotomic. Since
    $\nu<\infty$ and we are not in the case $p=2$ and $\omega = 1<\nu$, we have $\nu<\omega+n$.
    
    We prove by induction on $\nu\le s\le m$ that if $(\mathbf{a},d) = ((a_0,\dots,a_{m-1}),d)$ is a minimal norm pair of length $m$ and $\tilde
    d=d\mod p^\nu$, then $((a_0,\dots,a_{s-1}),d_s)$ is a minimal norm pair
    of length $s$ for any $d_s=\tilde{d} \mod p^s$.  Recall that Lemma \ref{le:contracting.and.d.equivalence} gives that $((a_0,\dots,a_{s-1}),d)$ is a minimal norm pair of length $s$.

    Since $\tilde d=d\mod p^\nu$, Lemma \ref{le:contracting.and.d.equivalence} proves the case $s=\nu$. Now suppose that $s>\nu$ and that $((a_0,\dots,a_{s-2}),d_{s-1})$ is a minimal norm pair of length $s-1$ with $\tilde d=d_{s-1} \mod
    p^{s-1}$.  Write $d_s= d_{s-1}+p^{s-1}x$ for $x \in \Z$.

    Let $(\alpha,\{\delta_i\}_{i=0}^{s-1})$ represent $((a_0,\dots,a_{s-2}),d_{s-1})$.  Then
    by definition
    \begin{align*}
      (\sigma-d_{s-1})\alpha &= \sum_{i=0}^{s-1} p^i\delta_i\\
        \xi_p &= \frac{d_{s-1}-1}{p}N_{K/F}(\alpha) +
        N_{K_{n-1}/F}(\delta_0) + \sum_{i=1}^{s-1}
        p^{i-1}N_{K/F}(\delta_i).
    \end{align*}
    For $0\le i<s-1$, let $\check\delta_i=\delta_i$;
    set $\check\delta_{s-1}=\delta_{s-1}-x \alpha$ and $\check \delta_{s}=1$.  Then
    \begin{align*}
        (\sigma-d_{s})\alpha &= (\sigma-d_{s-1})\alpha + (d_{s-1}-d_{s})
        \alpha = \sum_{i=0}^{s-1} p^i\delta_i - p^{s-1} x \alpha
		= \sum_{i=0}^{s} p^i\check\delta_i
    \end{align*}
    and
    \begin{align*}
        \frac{d_s-1}{p}&N_{K/F}(\alpha) + N_{K_{n-1}/F}(\check\delta_0) + \sum_{i=1}^{s} p^{i-1}N_{K/F}(\check\delta_i)  
        \\=&~ \frac{d_s-d_{s-1}}{p}N_{K/F}(\alpha) + \frac{d_{s-1}-1}{p}N_{K/F}(\alpha)
        \\ &\quad + N_{K_{n-1}/F}(\delta_0) + \sum_{i=1}^{s} p^{i-1}N_{K/F}(\delta_i) - p^{s-2}N_{K/F}(x \alpha)
        \\ =&~p^{s-2}xN_{K/F}(\alpha) - p^{s-2}N_{K/F}(x\alpha) 
        \\ &\quad + \frac{d_{s-1}-1}{p}N_{K/F}(\alpha) + N_{K_{n-1}/F}(\delta_0) + \sum_{i=1}^{s-1} p^{i-1}N_{K/F}(\delta_i) \\=& ~\xi_p.
    \end{align*}
    Let $\check{\mathbf{a}}=(\check a_0,\dots,\check a_{s-1})$ where $\check a_i=a_i$ for all $0\le i<s-1$, and $\check a_{s-1} = n$.  Then $(\alpha, \{\check \delta_i\})$ represents the norm pair $((\check a_0,\cdots,\check a_{s-1}),d_{s})$ of length $s$. 
    
    Since $K/F$ is not cyclotomic and $s>\nu$, by Proposition~\ref{pr:aiversusnu} we have $a_{\nu^*}+\nu-\nu^*= n$.  Suppose first that we are in the case $s = \nu+1$; then either we have $\nu^* = \nu$ or $\nu^* = (\nu-1)^*$.  In the first case we have $a_\nu = n$, and since we have already observed that $((a_0,\dots,a_\nu),d)$ is a minimal norm pair of length $\nu+1$, we must have
    $$((a_0,\dots,a_\nu),d) \leq ((\check a_0,\dots,\check a_\nu),d_{\nu+1}).$$  Hence we have $(a_0,\dots,a_{\nu}) = (\check a_0,\dots,\check a_{\nu})$, and furthermore since $\omega<\nu<m$, we also have
    $$\min\{v_p(d_s-1),m\} = \omega = \min\{v_p(d-1),m\}.$$
    So $((a_0,\dots,a_{\nu}),d_{\nu+1})$ is a minimal norm pair of length $s=\nu+1$, as desired.
    
    Alternatively, if we are in the case $s = \nu+1$ and $\nu^* = (\nu-1)^*$, then it must be that $a_\nu = -\infty$.  We also have
    $$\check a_{\nu} = n = a_{(\nu-1)^*}+\nu-(\nu-1)^* = \check a_{(\nu-1)^*}+\nu-(\nu-1)^*.$$
    By Lemma \ref{le:delicateIII},
    $$((\check a_0,\dots,\check a_{\nu-1},-\infty),d_{\nu+1}) = ((a_0,\dots,a_{\nu-1},a_\nu),d_{\nu+1})$$
    is a norm pair of length $\nu+1=s$. (Note that we can apply Lemma \ref{le:delicateIII} in the case $p=2$ since by assumption $\omega<\nu$, and hence $\omega \geq 2$ by our overriding assumption; then Lemma \ref{le:dinuomega} gives $d \in U_\omega \subseteq U_2$.)   Additionally, since $\omega < \nu < m$ we again have
    $$\min\{v_p(d_s-1),m\} = \omega = \min\{v_p(d-1),m\}$$
    and so $((a_0,\dots,a_\nu),d_{\nu+1})$ is a minimal norm pair of length $s = \nu+1$, as desired.
    
    Now suppose we are in the case $s>\nu+1$.  Observe that then
    \begin{equation*}
        \check a_{s-1} \leq  n = a_{\nu^*}+\nu-\nu^* < a_{\nu^*}+ s-1 -\nu^*= \check a_{\nu^*}+s-1-\nu^*.
    \end{equation*}
    Hence by Lemma \ref{le:delicateIII},
    \begin{equation*}
       ((\check a_0,\dots,\check a_{s-2},-\infty),d_s) = ((a_0,\dots,a_{s-2},-\infty),d_s)
    \end{equation*}
is a norm pair of length $s$. The same argument we carried out above shows that $((a_0,\dots,a_{s-1}),d_s)$ is a minimal norm pair.
       
       Now we settle (\ref{it:d.is.cyclo.character...omega.equals.nu}).  If $\omega = \infty$ then the result follows from Proposition \ref{pr:nuinfty}. If $m \leq \omega < \infty$ then Lemma \ref{le:dinuomega} gives $d \in U_{\min\{\omega,m\}} = U_m$ as desired.  So assume that $m > \omega = \nu$.  We will prove by induction on $\nu \leq s \leq m$ that if $(\mathbf{a},d)=((a_0,\cdots,a_{m-1}),d)$ is a minimal norm pair of length $m$, then $((a_0,\dots,a_{s-1}),1)$ is a norm pair of length $s$.  Since Lemma \ref{le:contracting.and.d.equivalence} gives that $((a_0,\dots,a_{s-1}),d)$ is a minimal norm pair of length $s$, this will prove that $d \in U_s$, and in particular $d \in U_m$. 
       
       The proof proceeds precisely as it did in the the proof of (\ref{it:d.is.cyclo.character...any.d.works}), so we only provide a sketch.  The base case $s = \nu$ follows from Lemma \ref{le:contracting.and.d.equivalence} because $d \in U_\omega = U_\nu$.  Otherwise we assume $\omega < s < m$ and that $((a_0,\dots,a_{s-2}),1)$ is a norm pair of length $s-1$ represented by $(\alpha,\{\delta_i\}_{i=0}^{s-1})$.  We define $\check \delta_i = \delta_i$ for all $0 \leq i \leq s-1$ and set $\check \delta_s = 1$.  One then verifies that $(\alpha,\{\check \delta_i\}_{i=0}^{s}\}$ represents $((a_0,\dots,a_{s-2},n),1)$, and shows (through case analysis, first when $s = \nu+1$ and then when $s>\nu+1$) that $((a_0,\dots,a_{s-2},a_{s-1}),1)$ is a norm pair. 
Finally, we establish (\ref{it:d.is.cyclo.character...d.is.cyclo.character}).  We have already settled the case $\nu = \infty$ and the cyclotomic case in Propositions \ref{pr:nuinfty} and \ref{pr:cyclopair}, so assume that $K/F$ is not cyclotomic and $\nu<\infty$.  Since we assume we are not in the case $p=2$ and $\omega = 1<\nu$, this allows us to conclude that $\nu < \omega + n$. By Lemma \ref{le:contracting.and.d.equivalence} we may assume that $m$ is
    larger than $\nu$.  
    
    We would like to use equation (\ref{eq:aiversusnu}) from the proof of Proposition \ref{pr:aiversusnu}, but to do so we must first rule out the possibility that $d = -1$.  This is immediate for $p>2$ since $-1 \not\in U_1$.  If $p=2$, our assumption gives us that either $\omega = \nu = 1$ or $\omega \geq 2$.  In the former case, we just proved that $d \in U_m$, and since we assume $m>\nu=1$ we have $d \in U_2$; in the latter case Lemma \ref{le:dinuomega} gives $d \in U_2$.  Hence $d \neq -1$, and so 
   \begin{equation*}
        \xi_p = 
        	\begin{cases}
	        	\displaystyle \sum_{i=0}^m \left(p^{n+i-1-a_i}P(a_i,0)\right)\delta_i, &d=1\\
        	\frac{d^{p^n}-1}{p}\alpha +\displaystyle \sum_{i=0}^m \left( a T_d(a_i) +  b Q_d(a_i,0) + c P(a_i,0)\right)\delta_i,&d \neq 1
			\end{cases}
    \end{equation*}
    where $$a = \frac{d-1}{p} p^{n+i-a_i}, \quad b = \frac{d-1}{p}p^i \left(\sum_{k=0}^{p^{n-a_i}-1} \frac{d^{kp^{a_i}}-1}{d-1}\right), \quad c = p^{n+i-1-a_i}.$$
    Since $\nu < \omega + n$ and $d \in U_2$ when $p=2$, Lemmas \ref{le:dinuomega} and \ref{le:upowerIII} give $\frac{d^{p^n}-1}{p} \in U_\nu$.  The proof of Proposition \ref{pr:aiversusnu} established that the coefficient of $Q_d(a_i)$ is divisible by $p^{\nu-1}$.  Furthemore, Proposition~\ref{pr:aiversusnu} tells us that for each
    $i$ with $0\le i<m$ we have $a_i\le n+i-\nu$ with equality precisely for $i = \nu^*$.  Hence we may take
    $p^{\nu-1}$ roots of both sides  to get
   \begin{equation*}
        \xi_{p^\nu} = 
        	\begin{cases}
	        	\displaystyle \sum_{i=0}^m \left(p^{n+i-\nu-a_i}P(a_i,0)\right)\delta_i, &d=1\\
        	\frac{d^{p^n}-1}{p^\nu}\alpha +\displaystyle \sum_{i=0}^m \left( \hat a T_d(a_i) +  \hat b Q_d(a_i,0) + \hat c P(a_i,0)\right)\delta_i,&d \neq 1
			\end{cases}
    \end{equation*}
    where $$\hat a = \frac{d-1}{p^\nu} p^{n+i-a_i}, \quad \hat b = \frac{d-1}{p^\nu}p^i \left(\sum_{k=0}^{p^{n-a_i}-1} \frac{d^{kp^{a_i}}-1}{d-1}\right), \quad \hat c = p^{n+i-\nu-a_i}.$$

Applying $\sigma-1$ to both sides when $d=1$, we
    obtain $(\sigma-1)\xi_{p^{\nu}}=1$ since the right-hand
    side is a product of norms $N_{K_{a_i}/F}(\delta_i)$.  Hence $\nu=\omega$ and $d=1$ is equivalent to the cyclotomic
    character modulo $p^\nu$.

    When $d\neq 1
    $, we apply $\sigma-d$ to both sides.
    The right-hand side is
    \begin{multline*}
        \sum_{i=0}^m \left(\frac{d^{p^n}-1}{p^\nu} p^i +(\sigma-d)
        \left( \hat a T_d(a_i) +  \hat b Q_d(a_i,0) + \hat c P(a_i,0)\right)\right)\delta_i.
    \end{multline*}
    Consider the coefficient of $\delta_i$. Taking into account the identities 
    \begin{align*}
    (\sigma-d)T_d(a_i)&=P(a_i,0)-\phi_d(P(a_i,0))\\
    (\sigma-d)Q_d(a_i,0)&=\sigma^{p^{a_i}}-d^{p^{a_i}}=1-d^{p^{a_i}}\\
    (\sigma-d)P(a_i,0)&=(1-d)P(a_i,0),
    \end{align*} we see that this coefficient is
    \begin{align*}
        &\frac{d^{p^n}-1}{p^\nu}p^i + \frac{d-1}{p^\nu} p^{n+i-a_i} P(a_i,0)-\frac{d-1}{p^\nu} p^{n+i-a_i} \frac{d^{p^{a_i}}-1}{d-1} \\
        &\quad + \frac{d-1}{p^\nu}p^i \left(\sum_{k=0}^{p^{n-a_i}-1} \frac{d^{kp^{a_i}}-1}{d-1}\right)(1-d^{p^{a_i}}) + p^{n+i-\nu-a_i}(1-d)P(a_i,0)\\
        &=\frac{d^{p^n}-1}{p^\nu}p^i -  p^{n+i-a_i} \frac{d^{p^{a_i}}-1}{p^\nu} + \frac{d-1}{p^\nu}p^i \frac{\frac{d^{p^n}-1}{d^{p^{a_i}}-1}-p^{n-a_i}}{d-1} (1-d^{p^{a_i}})\\
        &=\frac{d^{p^n}-1}{p^\nu}p^i -  p^{n+i-a_i} \frac{d^{p^{a_i}}-1}{p^\nu} + \frac{-p^i(d^{p^n}-1)-(1-d^{p^{a_i}})p^{n-a_i+i}}{p^\nu}\\
        &=\frac{(d^{p^n}-1)p^i-p^{n+i-a_i}(d^{p^{a_i}}-1)}{p^\nu} +       \frac{-p^i(d^{p^n}-1)-(1-d^{p^{a_i}})p^{n-a_i+i}}{p^\nu}
        \\&=0.
    \end{align*}
    That is, multiplicatively, $\xi_{p^{\nu}}^{\sigma -d}=1$, whence $d$ is equivalent to the cyclotomic character.

\section{Additional Module-theoretic properties of $X$}\label{se:more.module.properties}

In addition to the notation we have already adopted, for $0\le
i\le n$ we also define $H_i$ to be the subgroup of $G$ of degree
$p^{n-i}$, so that $G/H_i=G_i$.  

In this section we will see that there is a connection between the minimal norm vector $\mathbf{a}$ and certain properties of $X$ when considered as an $R_mH_i$-module for the various $0 \leq i \leq n$.  Hence we begin by establish some basic results in this vein.

\subsection{Modules over free quotients of $R_mG$}

Observe that for subgroups $S\le H\le G$, the $R_mS$-module $R_mH$ is a direct sum of $\vert H\vert/\vert S\vert$ modules isomorphic to $R_mS$.  Hence a free $R_mH$-module on $c$ generators is a free $R_mS$-module on $c\vert H\vert/\vert S\vert$ generators.

\begin{definition*}
For a subgroup $H\leq G$, we say that an $R_mH$-module $W$ is an \emph{$H$-eigenmodule} if there exists a homomorphism $e:H\to \left(R_m/\ann_{R_m} W\right)^\times$ such that for all $h \in H$ and $w \in W$ we have
\begin{equation*}
    hw=e(h)w.
\end{equation*}
We call $e$ an \emph{eigenhomomorphism}.  
\end{definition*}

Now suppose $H=\langle \sigma^{p^i}\rangle$ for some $0\le i\le n$, and let $\tau=\sigma^{p^i}$.  Then an $R_mH$-module $W$ is an $H$-eigenmodule if and only if there exists $c\in R_m$ such that $(\tau-c)\subset \ann_{R_mH} W$, and if so then the eigenhomomorphism $e$ is induced by the map $\tau\mapsto c$. Observe that if $S\le H$ and $W$ is an $H$-eigenmodule with eigenhomomorphism $e$, then $W$ is an $S$-eigenmodule with corresponding eigenhomomorphism $e\vert_S$.

\begin{lemma}\label{le:eigenfree}
    Suppose that $m\in \N$, $S\le H\le G$, and $W$ is a cyclic $R_mH$-module which is both an $R_mH$-eigenmodule and a free $R_m(H/S)$-module.  Then $W$ is a trivial $R_mH$-module, that is, the eigenhomomorphism is trivial.
\end{lemma}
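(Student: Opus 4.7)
The plan is to deduce the size of $W$ in two ways---once from the cyclic eigenmodule hypothesis and once from the free $R_m(H/S)$-rank---and to show that the only compatible configurations force the eigenhomomorphism to be trivial.

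First, I will pick an $R_mH$-generator $w$ of $W$. Since $W$ is an eigenmodule with eigenhomomorphism $e$, for every $h\in H$ we have $hw = e(h)w \in R_m\cdot w$; hence the $R_mH$-submodule generated by $w$ is already its $R_m$-submodule, giving $W = R_m\cdot w$ and in particular $|W| \le |R_m| = p^m$.

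Second, I will invoke the hypothesis that $W$ is a free $R_m(H/S)$-module. The natural interpretation in this context is that the $R_mH$-action on $W$ factors through the quotient ring $R_m(H/S)$, which itself is free of rank $[H:S]$ over $R_m$; so if the free $R_m(H/S)$-rank of $W$ equals $c$, then $|W| = p^{mc[H:S]}$. Comparing with the previous bound gives $c[H:S]\le 1$, leaving only two possibilities: either $c=0$, or $c=1$ and $H=S$.

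Finally, I will dispatch both cases. If $c=0$ then $W=0$ and any eigenhomomorphism is vacuously trivial. If $c=1$ and $H=S$, then $R_m(H/S) = R_m$, and the fact that the $R_mH$-action factors through $R_m(H/S)$ forces $H=S$ to act trivially on $W$, so again $e\equiv 1$. The only real obstacle I anticipate is being careful that the ``free $R_m(H/S)$-module'' hypothesis is read so that the $R_mH$-structure really does descend to $R_m(H/S)$ (equivalently, that $S$ acts trivially on $W$); once this is in hand, both the order count and the case analysis are immediate.
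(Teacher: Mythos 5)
Your proof is correct and follows essentially the same approach as the paper: both arguments observe that the eigenmodule hypothesis collapses the cyclic $R_mH$-module to $W = R_m w$, then compare this against the freeness over $R_m(H/S)$ to force $H = S$ (or $W = 0$). The only cosmetic difference is that you count cardinalities while the paper counts minimal numbers of $R_m$-generators; your treatment is if anything slightly more careful, since you explicitly dispatch the rank-$0$ case.
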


\begin{proof}
    Let $w$ be a cyclic generator of $W$, so that $W=R_mHw$.
    On one hand, since $W$ is an $R_mH$-eigenmodule, we have $W=R_m w$.  On the other hand,
    since $W$ is a free cyclic $R_m(H/S)$-module, the minimum
    number of generators of $W$ as an $R_m$-module is $\vert H/S\vert$.  Hence $H=S$.  That is, $W$ is fixed by $H$.  Hence $W$ is a trivial $R_mH$-module.
\end{proof}

\begin{definition*}
    Let $m\geq 2$ and $H\le G$.
    We say that an $R_mH$-module
    $W$ has property $\mathcal{P}(H)$ if there exist $s \geq 0$ and $y,z \in W\setminus (\tau^{p^s}-1,p)W$ satisfying
    \begin{equation*}
        (\tau^{p^s}-1)y=p^{m-1} z.
    \end{equation*}
\end{definition*}
Suppose that $H \leq T \leq G$, and that $W$ is an $R_mT$-module. From the previous definition, it follows that if $W$ satisfies property $\mathcal{P}(H)$ (considered as an $R_mH$-module), then $W$ satisfies property $\mathcal{P}(T)$ (considered as an $R_mT$-module) as well.  

\begin{lemma}\label{le:free.modules.do.not.have.property.P}If $W$ is a cyclic $R_mH$-module which is free as an $R_m(H/S)$-module for some $S\le H$, then $W$ does not have property $\mathcal{P}(H)$.
\end{lemma}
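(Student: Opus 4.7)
The plan is to identify $W$ concretely, then pin down the structure of the equation in property $\mathcal{P}(H)$ using the annihilator of $\tau^{p^s}-1$ provided by Lemma~\ref{le:kerbasicIII}.  To begin, note that $H$ is cyclic since it is a subgroup of $G=\langle\sigma\rangle$; write $H=\langle\tau\rangle$ and $p^k=[H:S]$.  The hypothesis that $W$ is free over $R_m(H/S)$ implicitly says that $S$ acts trivially on $W$, so $W$ is also cyclic over $R_m(H/S)$.  A cyclic free module is free of rank one, so one gets an identification $W\cong R_m(H/S)$ as $R_mH$-modules, with $\tau$ acting as its image $\bar\tau\in H/S$.

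Suppose for contradiction that $W$ has property $\mathcal{P}(H)$, witnessed by $s\ge 0$ and $y,z\notin(\tau^{p^s}-1,p)W$ with $(\tau^{p^s}-1)y=p^{m-1}z$.  Fix a cyclic generator $w$, and write $y=fw$, $z=gw$ for $f,g\in R_m(H/S)$, so that $(\bar\tau^{p^s}-1)f=p^{m-1}g$ in $R_m(H/S)$.  My goal is to deduce $g\in(\bar\tau^{p^s}-1,p)R_m(H/S)$, which will force $z\in(\tau^{p^s}-1,p)W$ and give the contradiction.

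If $s\ge k$ then $\bar\tau^{p^s}=1$, so $p^{m-1}g=0$; since $R_m(H/S)$ is $R_m$-free of finite rank, this forces $g\in pR_m(H/S)$.  If $s<k$, then Lemma~\ref{le:kerbasicIII} gives $\ann_{R_m(H/S)}(\bar\tau^{p^s}-1)=\langle P(k,s)\rangle$, so multiplying the equation by $P(k,s)$ yields $p^{m-1}P(k,s)g=0$, whence $P(k,s)g\in pR_m(H/S)$.  Reducing mod $p$ in $\F_p(H/S)\cong\F_p[\bar\tau-1]/(\bar\tau-1)^{p^k}$ (using the Freshman's dream identity $\bar\tau^{p^j}-1\equiv(\bar\tau-1)^{p^j}$) and combining with the telescoping identity $(\bar\tau^{p^s}-1)P(k,s)=\bar\tau^{p^k}-1$, a short calculation identifies $P(k,s)\equiv(\bar\tau-1)^{p^k-p^s}\pmod p$.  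Thus $\bar g\cdot(\bar\tau-1)^{p^k-p^s}=0$ in the truncated polynomial ring $\F_p[\bar\tau-1]/(\bar\tau-1)^{p^k}$, which forces $\bar g\in(\bar\tau-1)^{p^s}\F_p(H/S)=(\bar\tau^{p^s}-1)\F_p(H/S)$.  Lifting back to $R_m(H/S)$, one obtains $g\in(\bar\tau^{p^s}-1)R_m(H/S)+pR_m(H/S)$, and therefore $z\in(\tau^{p^s}-1,p)W$, the desired contradiction.

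The main technical point is the mod-$p$ identification $P(k,s)\equiv(\bar\tau-1)^{p^k-p^s}$ in $\F_p(H/S)$; this is routine from the Freshman's dream and the telescoping product but does require care because $(\bar\tau-1)^{p^s}$ is a zero-divisor in the Artinian local ring $\F_p[\bar\tau-1]/(\bar\tau-1)^{p^k}$, so one cannot simply ``divide'' in the formula $y^{p^s}P(k,s)=y^{p^k}$ — one checks the identity by direct expansion of $\sum_{j=0}^{p^{k-s}-1}(1+y^{p^s})^j$ in $\F_p[y]/y^{p^k}$.  Everything else is a direct application of Lemma~\ref{le:kerbasicIII} and the structural identification of $W$.
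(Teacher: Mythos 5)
Your proof is correct and takes essentially the same approach as the paper: after identifying $W\cong R_m(H/S)$, both arguments multiply the relation $(\tau^{p^s}-1)y=p^{m-1}z$ by the norm-type element $P(k,s)$ (which the paper writes as $\Theta$), annihilating the left side via $(\tau^{p^s}-1)P(k,s)=\tau^{p^k}-1=0$, and then use the mod-$p$ identification of $\F_p(H/S)$ with the truncated polynomial ring $\F_p[\tau-1]/(\tau-1)^{p^k}$ (together with $P(k,s)\equiv(\tau-1)^{p^k-p^s}$) to force $z\in(\tau^{p^s}-1,p)W$. The only cosmetic difference is that the paper expands $z$ in the basis $\{p^i(\tau-1)^j\}$ and reads off which coefficients must vanish, whereas you phrase the same computation via the annihilator lemma and reduction mod $p$.
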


\begin{proof}

We adopt the notation $\langle \tau \rangle = H$ and $\langle \tau^{p^k} \rangle = S$.  Write $w$ for a generator of $W$.  Suppose that for some $s \geq 0$ and $y,z \in \langle w \rangle$ we have $$(\tau^{p^s}-1)y = p^{m-1}z.$$
We will prove that this forces $z \in (\tau^{p^s}-1,p)\langle w \rangle$, from which we can conclude that property $\mathcal{P}(H)$ cannot be satisfied.

First, suppose that $\tau^{p^s} \in S$.  Then we have $(\tau^{p^s}-1)y=0$, and so $p^{m-1}z = 0$.  Lemma \ref{le:kerbasicIII} tells us that $\ann_{R_m(H/S)}\langle p^{m-1} \rangle = \langle p \rangle$, which would then imply $z \in (p)\langle w \rangle$.

Hence assume that $\tau^{p^s} \not\in S$, so that $s<k$.  Define $$\Theta = \sum_{t=0}^{p^{k-s}-1}\tau^{t p^s}.$$  Since $\Theta(\tau^{p^s}-1) = \tau^{p^k}-1 = 0$ in $R_m(H/S)$, we have $$0 = \Theta (\tau^{p^s}-1)y = \Theta p^{m-1} z.$$  Notice that $\Theta p^{m-1} \neq 0 \in R_m(H/S)$, and that $\Theta p^{m-1} p^i (\tau-1)^j = 0$ whenever $i>0$ or $j \geq p^s$.  Hence if we write $$z = \left(\sum_{i=0}^{m-1} \sum_{j=0}^{p^k-1} c_{i,j}p^i  (\tau-1)^j\right) w,$$ then the relation $\Theta p^{m-1} z=0$ implies that $c_{i,j}=0$ when $i=0$ and $0 \leq j <p^s$.  This gives that $z \in ((\tau-1)^{p^s},p)\langle w\rangle$, and since $(\tau-1)^{p^s} \equiv (\tau^{p^s}-1)$ modulo $p$, we have $z \in (\tau^{p^s}-1,p)\langle w \rangle$.
\end{proof}

\begin{corollary}\label{cor:direct.sums.do.not.have.property.P}
Suppose that $W$ is a direct sum of modules, each isomorphic to a free $R_m(H/S)$-module for some $S \leq H$.  Then $W$ does not have property $\mathcal{P}(H)$.
\end{corollary}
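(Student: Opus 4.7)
The plan is to reduce to the cyclic case treated in Lemma \ref{le:free.modules.do.not.have.property.P}. First, any free $R_m(H/S)$-module is itself a direct sum of cyclic free $R_m(H/S)$-modules, so we may rewrite $W = \bigoplus_\alpha W_\alpha$ where each $W_\alpha$ is a cyclic free $R_m(H/S_\alpha)$-module, with the subgroup $S_\alpha \leq H$ allowed to depend on $\alpha$. The submodule $(\tau^{p^s}-1,p)W$ respects this decomposition: $(\tau^{p^s}-1,p)W = \bigoplus_\alpha (\tau^{p^s}-1,p)W_\alpha$.

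Next, suppose for contradiction that $W$ has property $\mathcal{P}(H)$, so there exist some $s \geq 0$ and $y, z \in W \setminus (\tau^{p^s}-1,p)W$ with $(\tau^{p^s}-1)y = p^{m-1}z$. Write $y = \sum_\alpha y_\alpha$ and $z = \sum_\alpha z_\alpha$ along the direct sum. Then in each summand we get the componentwise equation $(\tau^{p^s}-1)y_\alpha = p^{m-1}z_\alpha$.

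The crucial observation is that the proof of Lemma \ref{le:free.modules.do.not.have.property.P} actually establishes something stronger than its headline statement: whenever $(\tau^{p^s}-1)u = p^{m-1}v$ holds inside a cyclic free $R_m(H/S)$-module $W_\alpha$, the element $v$ must lie in $(\tau^{p^s}-1,p)W_\alpha$. Indeed, the case analysis there (split according to whether $\tau^{p^s} \in S_\alpha$ or not) concludes in each instance with the explicit containment $v \in (\tau^{p^s}-1,p)W_\alpha$, from which the non-existence of $(u,v)$ outside $(\tau^{p^s}-1,p)W_\alpha$ was only then deduced. Applying this componentwise gives $z_\alpha \in (\tau^{p^s}-1,p)W_\alpha$ for every $\alpha$, and summing yields $z \in (\tau^{p^s}-1,p)W$, contradicting the choice of $z$.

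The main (minor) obstacle is just the bookkeeping verification that Lemma \ref{le:free.modules.do.not.have.property.P}'s argument really delivers a componentwise membership statement for $z$ rather than only the bare non-existence conclusion, and that passing to cyclic summands preserves the relevant submodule; both are routine to check by reexamining the displayed calculation. An alternative (essentially equivalent) phrasing would be to run the two-case argument from the previous lemma directly on each $W_\alpha$, which avoids any need to re-interpret the lemma's conclusion.
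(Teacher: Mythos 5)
Your proposal is correct and follows essentially the same route as the paper: decompose $W$ into cyclic free summands, split the relation $(\tau^{p^s}-1)y = p^{m-1}z$ componentwise, and invoke the stronger containment $z_i \in (\tau^{p^s}-1,p)\langle w_i\rangle$ established within the \emph{proof} of Lemma~\ref{le:free.modules.do.not.have.property.P} (not just its headline conclusion) before reassembling. Your extra remarks about the ideal $(\tau^{p^s}-1,p)W$ respecting the decomposition and about re-reading the lemma's proof as a componentwise membership statement are exactly the bookkeeping the paper's argument relies on implicitly.
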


\begin{proof}
Write $W = \oplus \langle w_i \rangle$, where each $\langle w_i \rangle$ is free as an $R_m(H/S)$ module for some $S$.  As before, suppose that $s \geq 0$ and that $y,z \in W$ satisfy $$(\tau^{p^s}-1)y = p^{m-1} z.$$  Write $y = \sum y_i$ and $z = \sum z_i$, so that we have equations
$$(\tau^{p^s}-1)y_i = p^{m-1}z_i.$$  By the proof of Lemma \ref{le:free.modules.do.not.have.property.P}, this equation implies that $z_i \in (\tau^{p^s}-1,p)\langle w_i \rangle$, and hence $z \in (\tau^{p^s}-1,p)W$.  Hence Property $\mathcal{P}(H)$ cannot be satisfied.
\end{proof}

\subsection{Results involving interpolated vectors}

\begin{definition*}
    Let $(\mathbf{a},d)$ be a minimal norm pair of length $m$.  Then the \emph{interpolated vector} $\tilde{\mathbf{a}} = (\tilde a_0,\dots,\tilde a_{m-1})$ is defined by $$\tilde a_i = \left\{\begin{array}{ll}-\infty,&\text{ if }i^*=-\infty\\a_{i^*}+(i-i^*),&\text{ otherwise. }\end{array}\right.$$
\end{definition*}
\noindent Observe that if $m\ge 2$, $\mathbf{a} = (a_0,\dots,a_{m-1})$, and $\mathbf{a}' = (a_0,\dots,a_{m-2})$, then
$\tilde a_i=\tilde{a}_i'$ for $0\le i\le m-2$.  Hence the
interpolated vector of a truncation of a vector $\mathbf{a}$ is the same
as corresponding truncation of the interpolated vector $\tilde{\mathbf{a}}$.  Observe also that if $\tilde a_{i-1} \neq -\infty$, then we have
$$\tilde a_i - \tilde a_{i-1} = a_{i^*}-a_{(i-1)^*}+1-(i^*-(i-1)^*) >0.$$

\begin{proposition}\label{pr:interp}
    Suppose that $(\mathbf{a},d)$ is a minimal norm pair of length $m$.  Then $a_0=\tilde a_0$, and for all $1\le i\le m$ we have 
    \begin{equation*}
        a_i = \begin{cases}
            -\infty, & \tilde a_i = \tilde a_{i-1}+1 \\
            \tilde a_i, & \tilde a_i>\tilde a_{i-1}+1.
        \end{cases}
    \end{equation*}
\end{proposition}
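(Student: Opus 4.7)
The plan is to reduce the proposition to a direct case analysis driven entirely by the strict monotonicity statement in Proposition~\ref{pr:minimality.conditions}(\ref{it:exc.mod.indecom.condition...ai.vs.aj.inequalityIII}), which for a minimal norm pair guarantees $a_i + j < a_{i+j}$ whenever $0\le i<i+j<m$ and $a_{i+j}\neq-\infty$. This single inequality is precisely what forces the gap condition distinguishing the two cases in the conclusion.

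The equality $a_0=\tilde a_0$ is immediate from the definition: if $a_0=-\infty$ then $0^*=-\infty$ and both sides are $-\infty$; otherwise $0^*=0$ and $\tilde a_0 = a_0+(0-0)=a_0$. For $1\le i\le m-1$, I would split into two cases according to whether $a_i=-\infty$ or $a_i\neq-\infty$. If $a_i=-\infty$, then $i^*=(i-1)^*$; when $(i-1)^*=-\infty$ we get $\tilde a_i=\tilde a_{i-1}=-\infty$, and (under the convention $-\infty+1=-\infty$) the relation $\tilde a_i=\tilde a_{i-1}+1$ holds; when $(i-1)^*\neq-\infty$, writing $j=(i-1)^*$ gives
\begin{equation*}
\tilde a_i = a_{i^*}+(i-i^*) = a_j+(i-1-j)+1 = \tilde a_{i-1}+1.
\end{equation*}
So $a_i=-\infty$ implies $\tilde a_i=\tilde a_{i-1}+1$, which is the first branch of the conclusion.

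Conversely, suppose $a_i\neq-\infty$, so $i^*=i$ and $\tilde a_i=a_i$. If $(i-1)^*=-\infty$, then $\tilde a_{i-1}=-\infty$, and the inequality $\tilde a_i>\tilde a_{i-1}+1$ is automatic, while $a_i=\tilde a_i$ is trivial. If $(i-1)^*=j\neq-\infty$, then $0\le j<i<m$ and $a_i\neq-\infty$, so Proposition~\ref{pr:minimality.conditions}(\ref{it:exc.mod.indecom.condition...ai.vs.aj.inequalityIII}) applied with the pair $(j,i-j)$ yields $a_j+(i-j)<a_i$, i.e., $\tilde a_{i-1}+1<\tilde a_i$. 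Combined with $\tilde a_i=a_i$, this gives the second branch.

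Putting the two cases together shows that the dichotomy between $\tilde a_i=\tilde a_{i-1}+1$ and $\tilde a_i>\tilde a_{i-1}+1$ is exactly the dichotomy between $a_i=-\infty$ and $a_i\neq-\infty$, with the correct value $\tilde a_i$ assigned in the latter case. The main (and essentially the only) obstacle is bookkeeping with the $-\infty$ convention when $(i-1)^*=-\infty$, so one should take care that the agreement $-\infty+1=-\infty$ used in the statement is consistently applied; all nontrivial content comes from the single inequality of Proposition~\ref{pr:minimality.conditions}(\ref{it:exc.mod.indecom.condition...ai.vs.aj.inequalityIII}).
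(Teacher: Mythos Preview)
Your proof is correct and follows essentially the same approach as the paper's: both arguments reduce to the definition of $i^*$ together with the strict monotonicity of $\{a_j-j:a_j\neq-\infty\}$ from Proposition~\ref{pr:minimality.conditions}(\ref{it:exc.mod.indecom.condition...ai.vs.aj.inequalityIII}), and both handle the $(i-1)^*=-\infty$ case via the convention $-\infty+1=-\infty$. The only cosmetic difference is that the paper frames the key step as the equivalence $i^*=(i-1)^*\Leftrightarrow a_{i^*}-i^*=a_{(i-1)^*}-(i-1)^*$, whereas you apply the inequality directly to the pair $(j,i-j)$; these are the same computation.
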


\begin{proposition}\label{pr:eigen}
    Assume the hypotheses of Theorem \ref{th:d.is.cyclotomic.character}, let $0 \leq i<\nu$ be given.  Suppose $(\mathbf{a},d)$ is a minimal norm pair of length $i+1$.
    \begin{enumerate}
    \item\label{it:eigen...ai.is.minus.infinity} If $\tilde a_i=-\infty$, then $X$ is an $R_{i+1}G$-eigenmodule. 
    \item\label{it:eigen...not.eigenmodule}If $\tilde a_i\neq -\infty$, then $X$ is not an $R_{i+1}H_{\tilde a_i}$-eigenmodule, but does satisfy property $\mathcal{P}(H_{\tilde a_i})$.  
    \item\label{it:eigen...is.trivial.module}If $\tilde a_i \neq -\infty$, then $X$ is a trivial $R_{i+1}H_{\tilde a_i+ 1}$-module. 
    \end{enumerate}
\end{proposition}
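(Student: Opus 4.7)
My plan is to work directly from the defining relations of $X$ in (\ref{eq:definition.of.Xadm}): $X$ is generated additively by $\alpha,\delta_0,\ldots,\delta_i$ subject to $(\sigma-d)\alpha = \sum_{j=0}^{i} p^j \delta_j$ and $(\sigma^{p^{a_j}}-1)\delta_j = 0$, with $\delta_j = 0$ whenever $a_j = -\infty$.  For part (\ref{it:eigen...ai.is.minus.infinity}), the hypothesis $\tilde a_i = -\infty$ forces $a_j = -\infty$ for every $j \leq i$, so all $\delta_j$ vanish and the single relation $(\sigma - d)\alpha = 0$ makes $X$ cyclic on $\alpha$ with $\sigma\alpha = d\alpha$, yielding the eigenhomomorphism induced by $\sigma\mapsto d$.

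For part (\ref{it:eigen...is.trivial.module}), I set $\rho = \sigma^{p^{\tilde a_i+1}}$ and check that $(\rho-1)$ kills every generator in $J_{i+1}$.  For each $\delta_j$ with $a_j\neq -\infty$, the inequality $a_j \leq \tilde a_i$ (obtained from Proposition \ref{pr:minimality.conditions}(\ref{it:exc.mod.indecom.condition...ai.vs.aj.inequalityIII}) combined with $\tilde a_i = a_{i^*} + (i - i^*)$) ensures $\sigma^{p^{a_j}} - 1$ divides $\rho - 1$ in $R_{i+1}G$.  For $\alpha$ I apply the identity $\rho - 1 = (\sigma - d)Q_d(\tilde a_i + 1, 0) + (d^{p^{\tilde a_i+1}} - 1)$ and the defining relation on $(\sigma - d)\alpha$ to get
\[
(\rho - 1)\alpha = \sum_{j=0}^{i} p^j Q_d(\tilde a_i + 1, 0) \delta_j + (d^{p^{\tilde a_i + 1}} - 1)\alpha.
\]
Proposition \ref{pr:minimality.conditions}(\ref{it:exc.mod.indecom.condition...power.of.dIII}) combined with Lemma \ref{le:upowerIII} yields $d^{p^{\tilde a_i + 1}} \in U_{i+2}$, killing the constant term.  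For each summand I use the factorization $Q_d(\tilde a_i + 1, 0) = Q_d(\tilde a_i + 1, a_j) Q_d(a_j, 0)$: since $\sigma^{p^{a_j}}\delta_j = \delta_j$, the first factor reduces on $\delta_j$ to the scalar $\sum_{t=0}^{p^{\tilde a_i + 1 - a_j} - 1}(d^{p^{a_j}})^t$, which direct expansion via Lemma \ref{le:upowerpnot2III} shows is divisible by $p^{\tilde a_i + 1 - a_j}$; the bound $a_{i^*} - a_j \geq i^* - j$ from Proposition \ref{pr:minimality.conditions}(\ref{it:exc.mod.indecom.condition...ai.vs.aj.inequalityIII}) then yields divisibility of each summand by $p^{i+1}$.

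For part (\ref{it:eigen...not.eigenmodule}), the same computation with $\tau = \sigma^{p^{\tilde a_i}}$ in place of $\rho$ produces $(\tau - 1)\alpha = p^i u \cdot Q_d(a_{i^*}, 0) \delta_{i^*}$ in $J_{i+1}$ for some unit $u\in R_{i+1}$: the constant term vanishes since $d^{p^{\tilde a_i}} \in U_{i+1}$, contributions with $j < i^*$ carry valuation $\geq i + 1$, contributions with $j > i^*$ vanish, and the $j = i^*$ term evaluates to exactly $p^i$ times a unit times $Q_d(a_{i^*}, 0) \delta_{i^*}$.  Setting $y = \alpha$, $z = u Q_d(a_{i^*}, 0)\delta_{i^*}$, and $s = 0$ verifies property $\mathcal{P}(H_{\tilde a_i})$ provided $y, z \notin (\tau - 1, p)X$: the former because $\alpha$ is a minimal generator of $X$, the latter because $Q_d(a_{i^*}, 0) \equiv P(a_{i^*}, 0) \pmod p$ together with the freeness of $\langle[\delta_{i^*}]_1\rangle$ over $\F_p G_{a_{i^*}}$ from Proposition \ref{pr:minimality.conditions}(\ref{it:exc.module.indecom.condition...deltafree}) forces $z$ to be nonzero modulo $(\tau - 1, p)X$.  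To rule out $X$ being an $H_{\tilde a_i}$-eigenmodule, observe that any such structure would force $\tau \alpha = c \alpha$ for some $c \in R_{i+1}$; combined with $\tau \delta_{i^*} = \delta_{i^*}$ (since $a_{i^*} \leq \tilde a_i$) and the fact that $\delta_{i^*}$ generates a free cyclic $R_{i+1}G_{a_{i^*}}$-submodule of $X$, this would force $c = 1$, contradicting the nontriviality of $(\tau-1)\alpha = p^i u Q_d(a_{i^*}, 0)\delta_{i^*}$.

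The main obstacle is the careful $p$-adic bookkeeping of the scalar $\sum_t (d^{p^{a_j}})^t$, particularly when $p = 2$ or when $a_j = 0$.  The hypothesis excluding $p = 2, \omega = 1 < \nu$, combined with Theorem \ref{th:d.is.cyclotomic.character} and Lemma \ref{le:dinuomega}, guarantees $d \in U_{\max(\omega, 2)}$ whenever $p = 2$, so that $d^{p^{a_j}} \in U_{2 + a_j}$ rather than merely $U_{j+1}$; this gives enough slack for Lemma \ref{le:upowerpnot2III} to produce a unit coefficient after factoring out $p^{\tilde a_i + 1 - a_j}$ from the scalar.
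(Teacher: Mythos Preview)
Your arguments for parts (\ref{it:eigen...ai.is.minus.infinity}) and (\ref{it:eigen...is.trivial.module}) are correct and match the paper's approach closely; your computation $(\tau-1)\alpha = p^{i}uQ_d(a_{i^*},0)\delta_{i^*}$ and the verification of property $\mathcal{P}(H_{\tilde a_i})$ in part (\ref{it:eigen...not.eigenmodule}) are also fine.

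There is a gap in your argument that $X$ is not an $H_{\tilde a_i}$-eigenmodule.  You assert that $\delta_{i^*}$ generates a free cyclic $R_{i+1}G_{a_{i^*}}$-submodule of $X$, and you use this both to force $c=1$ (from $(c-1)\delta_{i^*}=0$) and to conclude $p^{i}Q_d(a_{i^*},0)\delta_{i^*}\neq 0$.  This freeness is \emph{not} what Proposition~\ref{pr:minimality.conditions}(\ref{it:exc.module.indecom.condition...deltafree}) gives you: that result concerns freeness of $\langle[\delta_{i^*}]_1\rangle$ over $\F_pG_{a_{i^*}}$ inside $J_1$, which is strictly weaker than freeness over $R_{i+1}G_{a_{i^*}}$ inside $X$.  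The stronger claim happens to be true, but proving it requires exactly the kind of coefficient chase in the presentation of $X$ that you are trying to avoid: one must show that any relation $r\delta_{i^*}=0$ forces $r\in(\sigma^{p^{a_{i^*}}}-1)R_{i+1}G$, and after projecting to $R_{i+1}G_{a_{i^*}}$ this comes down to $p^{\,i^*+n-a_{i^*}}=0$ in $R_{i+1}$, i.e.\ to the inequality $\tilde a_i<n$ coming from Proposition~\ref{pr:aiversusnu}.

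The paper sidesteps this by arguing differently.  To get the eigenvalue equal to $1$ it does not look at $\delta_{i^*}$ at all; instead it observes that $(\tau-1)\alpha\in\langle\delta_0,\dots,\delta_i\rangle$, so $(e-1)\alpha = (\tau-1)\alpha-(\tau-e)\alpha\in\langle\delta_0,\dots,\delta_i\rangle$, and then Lemma~\ref{le:inter} forces $e-1\in p^{\nu}R_{i+1}=0$.  For the nonvanishing of $(\tau-1)\alpha$ the paper works directly with the presentation (\ref{eq:definition.of.Xadm}): assuming $(\tau-1)\alpha=0$, it writes this as a combination of the defining relations, isolates the coefficient of $\delta_{i^*}$, projects to $R_{i+1}G_{a_{i^*}}$ using Lemma~\ref{le:qhomoIII}, and again uses $\tilde a_i<n$ (from Proposition~\ref{pr:aiversusnu}) to reach $p^{i}=0$ in $R_{i+1}$.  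So both routes ultimately rest on the same arithmetic inequality, but the paper makes that dependence explicit whereas your freeness claim hides it.
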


\begin{lemma}\label{le:inter}
Assume the hypotheses of Theorem \ref{th:d.is.cyclotomic.character}.  Suppose $(\mathbf{a},d)$ is a minimal norm pair of length $m$ represented by $(\alpha,\{\delta_i\}_{i=0}^m)$.  If $c\in R_m$ satisfies $c [\alpha]_m \in \langle [\delta_0]_m,\dots,[\delta_{m-1}]_m\rangle$, then $c\in p^\nu R_m$.  (Here we interpret $p^\infty R_m = \{0\}$.)
\end{lemma}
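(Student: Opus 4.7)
The plan is to use the explicit presentation of $X$ from equation (\ref{eq:definition.of.Xadm}) to pin down the structure of the quotient $X/N$ where $N = \langle [\delta_0]_m, \ldots, [\delta_{m-1}]_m \rangle_{R_m G}$, then extract the annihilator of the image of $[\alpha]_m$ in $R_m$. First I would pass to $X/N$: doing so reduces the first relation $\alpha^{\sigma-d} = \sum p^i\delta_i$ to $(\sigma-d)[\alpha]_m = 0$ and trivializes the relations $(\sigma^{p^{a_i}}-1)\delta_i = 0$. Since (\ref{eq:definition.of.Xadm}) lists a complete set of defining relations for $X$, we obtain
\[
X/N \;\cong\; R_m G/\langle \sigma-d\rangle \;\cong\; R_m/(d^{p^n}-1),
\]
where the last isomorphism uses $R_m G = R_m[\sigma]/(\sigma^{p^n}-1)$ and the image of $[\alpha]_m$ maps to $1$. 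Consequently, the hypothesis $c[\alpha]_m \in N$ translates into the ring-theoretic statement $c \in (d^{p^n}-1) R_m$.

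The final step is to show that $(d^{p^n}-1)R_m \subseteq p^\nu R_m$. This is where Theorem \ref{th:d.is.cyclotomic.character}(\ref{it:d.is.cyclo.character...d.is.cyclo.character}) enters: it tells us that $d$ is equivalent to the cyclotomic character $\chi$ of $K/F$ modulo $p^{\min\{m,\nu\}}$. Since $\chi: G \to (\Z/p^\nu\Z)^\times$ is a group homomorphism and $|G| = p^n$, one has $\chi(\sigma)^{p^n} = \chi(\sigma^{p^n}) = 1$, whence $d^{p^n} \equiv 1 \pmod{p^{\min\{m,\nu\}}}$. In $R_m$ this says $d^{p^n}-1 \in p^\nu R_m$, so $(d^{p^n}-1) R_m \subseteq p^\nu R_m$, and combining with the previous step we get $c \in p^\nu R_m$. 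The edge case $\nu = \infty$ is handled separately via Proposition \ref{pr:nuinfty}, which forces $d = 1$; then $X/N \cong R_m$ and the hypothesis $c[\alpha]_m \in N$ compels $c = 0 \in p^\infty R_m = \{0\}$.

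The main obstacle lies in justifying the quotient computation of the first paragraph, namely confirming that the presentation in (\ref{eq:definition.of.Xadm}) really accounts for all $R_m G$-relations among the generators of $X$, so that no extra relation on $[\alpha]_m$ survives modulo $N$ beyond $(\sigma-d)[\alpha]_m = 0$. Once that is granted, the remainder is a clean identification of cyclic quotients of group rings together with a direct invocation of Theorem \ref{th:d.is.cyclotomic.character}.
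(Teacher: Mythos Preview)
Your proposal is correct and follows essentially the same strategy as the paper's proof: both arguments exploit the fact that the relations in (\ref{eq:definition.of.Xadm}) generate all relations in $X$, extract from the hypothesis $c[\alpha]_m\in N$ the consequence that $c$ lies in the ideal $(\sigma-d)R_mG$, and then evaluate at $\sigma\mapsto d$ to conclude $c\in p^{\min\{m,\nu\}}R_m=p^\nu R_m$.

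The differences are purely presentational. You package the coefficient-of-$[\alpha]_m$ extraction as the computation of the quotient $X/N\cong R_mG/(\sigma-d)\cong R_m/(d^{p^n}-1)$, whereas the paper writes the relation explicitly as a combination of the defining relations and reads off $c=f_\alpha(\sigma-d)$. Your route to $d^{p^n}\equiv 1\pmod{p^{\min\{m,\nu\}}}$ via the group-homomorphism property of the cyclotomic character is a bit more direct than the paper's, which instead invokes Lemma~\ref{le:upowerIII} applied to $d\in U_\omega$ (together with $\omega+n\ge\nu$) to reach the same conclusion and then applies the evaluation map $\phi_d^{(s)}$ on $R_sG$. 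Either way the content is identical. Your caveat about the presentation being complete is exactly the point the paper also relies on and states explicitly at the start of its proof.
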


\begin{proof}
Recall that $X = \langle [\alpha]_m,[\delta_0]_m,[\delta_1]_m,\dots,[\delta_{m-1}]_m\rangle$ has its relations generated by the equations from (\ref{eq:definition.of.Xadm}).  
    Suppose 
    \begin{equation*}
        c[\alpha]_m\in \langle [\delta_0]_m,\dots,[\delta_{m-1}]_m\rangle,
    \end{equation*}
    so that there exist $g_0,\dots,g_{m-1}\in R_mG$ such that
    \begin{equation*}
        c[\alpha]_m - \sum_{i=0}^{m-1} g_i[\delta_i]_m= [0]_m.
    \end{equation*}
    Let $\mathcal{X}$ be a free $R_mG$-module on the set of generators $\{A,\Delta_0,\cdots,\Delta_{m-1}\}$, and define a surjection $\mathcal{X} \to X$ by sending $A$ to $[\alpha]_m$, and each $\Delta_i$ to $[\delta_i]_m$.  By (\ref{eq:definition.of.Xadm}), we have
    \begin{equation*}
        cA - \sum_{i=0}^{m-1} g_i\Delta_i = f_\alpha\left((\sigma-d)A
        -\sum_{i=0}^{m-1} p^i\Delta_i\right) +
        \sum_{i=0}^{m-1}f_i(\sigma^{p^{a_i}}-1)\Delta_i
    \end{equation*}
    for $f_\alpha,f_0,f_1,\dots,f_{m-1} \in R_m G$.

    Considering the coefficients of $A$, we obtain
    \begin{equation*}
        c = f_\alpha(\sigma-d).
    \end{equation*}
    Let $s=\min \{m,\nu\}$. Theorem \ref{th:d.is.cyclotomic.character}(\ref{it:d.is.cyclo.character...d.is.cyclo.character}) tells us that $d$ is equivalent to the cyclotomic character modulo $p^s$.  If $\omega \geq m$ then we have $d \in U_m$, in which case clearly $d^{p^n} \in U_s$.  Otherwise $d \in U_\omega$, in which case Lemma \ref{le:upowerIII} tells us $d^{p^n} \in U_{\omega +n}$.  Since we assume we are not in the case $p=2$ and $\omega = 1 < \nu$, it follows that $U_{\omega +n} \subseteq U_\nu \subseteq U_s$.  In either case, then, we have $d^{p^n}\in U_s$, so the map $\phi_d^{(s)}:R_sG \to R_s$ induced by $\sigma \mapsto d$ is well-defined.  The previous equation then becomes
    \begin{equation*}
        c=\phi_d^{(s)}(c)=0,
    \end{equation*}
    which gives the desired result.
\end{proof}

\begin{proof}[Proof of Proposition~\ref{pr:interp}]
    If $\tilde a_0=-\infty$ then $0^*=-\infty$ and therefore
    $a_0=-\infty$.  Otherwise, $0^*=0$ and so $\tilde a_0=a_0$.  

    Now assume that $1\le i\le m$.  If $(i-1)^*=-\infty$ then $\tilde a_i = \tilde a_{i-1}+1$ is equivalent to $\tilde a_i = -\infty$, which is in turn equivalent to $a_i = -\infty$. So assume instead that $(i-1)^*\geq 0$.  
    
    By definition $a_i = -\infty$ if and only if $i^*<i$, which is clearly equivalent to $i^* = (i-1)^*$.  Now recall that Proposition \ref{pr:minimality.conditions}(\ref{it:exc.mod.indecom.condition...ai.vs.aj.inequalityIII}) tells us that the sequence $\{a_j-j:a_j \neq -\infty\}$ is strictly increasing.  Since $a_{i^*},a_{(i-1)^*}\neq -\infty$ by assumption, we have $i^*=(i-1)^*$ is equivalent to 
    $$a_{i^*}-i^* = a_{(i-1)^*}-(i-1)^*.$$  By adding $i$ to both sides, we see that this latter equality occurs if and only $\tilde a_i = \tilde a_{i-1}+1$.  
    
    Since we already know $\tilde a_i > \tilde a_{i-1}$ when $(i-1)^*\geq 0$, the desired result follows.
\end{proof}

\begin{proof}[Proof of Proposition~\ref{pr:eigen}]  For notational simplicity in the course of this proof, we will simply write $[\gamma]$ in place of $[\gamma]_{i+1}$.

We start with (\ref{it:eigen...ai.is.minus.infinity}).
    If $\tilde a_i=-\infty$ then by definition $a_j=-\infty$ for all $0\le
    j\le i$ and so $X$ is the $R_{i+1}G$-module $\langle [\alpha] \rangle$
    on which $\sigma$ acts as multiplication by $d$.  Hence $X$ is a
    $G$-eigenmodule.

    We now move on to (\ref{it:eigen...not.eigenmodule}).  Assume then that $\tilde a_i\neq -\infty$, so that $i^*\neq
    -\infty$ and $a_{i^*}\neq -\infty$, and assume that $i\ge 1$.

    We claim that $d^{p^{\tilde a_i}}\in U_{i+1}$.  If $d\in
    U_{i+1}$ then we are done, so assume that $d \not\in U_{i+1}$.  Recall that Lemma \ref{le:dinuomega} gives us $d \in U_\omega$, and hence $\omega \leq i$.  
    Since $d\not\in U_{i+1}$ we cannot be in the case $\omega = \nu$ (since otherwise Theorem \ref{th:d.is.cyclotomic.character}(\ref{it:d.is.cyclo.character...omega.equals.nu}) gives $d \in U_{i+1}$), so by Lemma \ref{le:dinuomega} we have $d \not\in U_{\omega+1}$.  Note additionally that since $\omega<\nu$, when $p=2$ we must have $\omega \geq 2$. In any case, then, Lemma~\ref{le:upowerIII} tells us  that $d^{p^{\tilde a_i}}\in U_{\omega+\tilde a_i}$.  If $i^* \geq \omega$, then Proposition \ref{pr:minimality.conditions}(\ref{it:exc.mod.indecom.condition...lower.bound.based.on.UIII}) gives $a_{i^*}>i^*-\omega$, and therefore  $\omega+\tilde a_i=\omega+a_{i^*}+i-i^*>i$.  Otherwise $i^*<\omega$, and so $\omega+\tilde a_i = \omega + a_{i^*} + i-i^*\ge i+1$. In either case, we conclude $d^{p^{\tilde a_i}}\in U_{i+1}$.


    Suppose that $X = \langle [\alpha],[\delta_0],[\delta_1],\dots,[\delta_{i}]\rangle$ is an $R_{i+1}H_{\tilde
    a_i}$-eigenmodule, and let $\tau=\sigma^{p^{\tilde a_i}}$.
    Then there exists $e\in R_{i+1}$ such that
    $(\tau-e)X=\{0\}$.

    We claim first that $e=1$.  Observe that
    \begin{equation*}
        (\tau-1)[\alpha]=(\sigma^{p^{\tilde a_i}}-d^{p^{\tilde a_i}})[\alpha]=
        Q_d(\tilde a_i,0)(\sigma-d)[\alpha] =Q_d(\tilde
        a_i,0)\sum_{j=0}^{i^*} p^j[\delta_j].
    \end{equation*}
    Let $X^0=\langle [\delta_0],\dots,[\delta_i]\rangle$.  We have shown that
    $(\tau-1)[\alpha]\in X^0$.  Then
    \begin{equation*}
        (\tau-1)[\alpha]-(\tau-e)[\alpha]=(e-1)[\alpha]\in X^0.
    \end{equation*}
    By Lemma~\ref{le:inter}, $e-1\in p^{\nu}R_{i+1}$.  Since $i<\nu$
    we obtain $e-1 = 0 \in R_{i+1}$.  Hence $e=1$.

   Now
    \begin{equation*}
        (\tau-1)[\alpha]=(\sigma^{p^{\tilde a_i}}-d^{p^{\tilde a_i}})[\alpha]=
        Q_d(\tilde a_i,0)(\sigma-d)[\alpha] =Q_d(\tilde
        a_i,0)\sum_{j=0}^{i^*} p^j[\delta_j] = 0.
    \end{equation*}
    Consider $0\le j\le i^*$ such that $a_j \neq -\infty$, and let $\chi_{a_j}:R_{i+1}G\to
    R_{i+1}G_{a_j}$ be the natural $R_{i+1}$-homomorphism. Note that if we are in the case $p=2$ then either $\omega = \nu$ or $\omega \geq 2$.  In the former case Theorem \ref{th:d.is.cyclotomic.character}(\ref{it:d.is.cyclo.character...omega.equals.nu}) gives $d \in U_{i+1} \subseteq U_2$, whereas in the latter case Lemma \ref{le:dinuomega} gives $d \in U_\omega \subseteq U_2$.  Hence we may apply Lemma~\ref{le:qhomoIII} and conclude
    \begin{equation*}
        \chi_{a_j}(Q_d(\tilde a_i,0)) = p^{\tilde a_i-a_j}Q_d(
        a_j,0) \mod p^{\tilde a_i-a_j+1}R_{i+1}G_{a_j}
    \end{equation*}
    Hence
    \begin{equation*}
        Q_d(\tilde a_i,0)\in p^{\tilde a_i-a_j}Q_d(a_j,0) + \langle
        p^{\tilde a_i-a_j+1}, (\sigma^{p^{a_j}}-1)\rangle.
    \end{equation*}
    Since $(\sigma^{p^{a_j}}-1)\delta_j=0$ by the defining relations (\ref{eq:definition.of.Xadm}), we have
    \begin{equation}\label{eq:isXaneigenmodule}
        (\tau-1)\alpha=
        \sum_{j=0}^{i^*} p^{\tilde a_i-a_j+j}Q_d(a_j,0)\delta_j+r_j\delta_j = 0.
    \end{equation}
    for some $r_j\in p^{\tilde a_i-a_j+j+1}R_mG$. (Note: this sum can include $j$ for which $a_j = -\infty$, since then $\delta_j = 0$ and therefore the corresponding summands are trivial.)

    Again appealing to (\ref{eq:definition.of.Xadm}) we deduce
    \begin{multline}\label{eq:eigene1}
        \sum_{j=0}^{i^*} p^{\tilde a_i-a_j+j}Q_d(a_j,0)[\delta_j]+r_j[\delta_j] = \\
        f_\alpha\left((\sigma-d)[\alpha]-\sum_{j=0}^{i^*} p^j[\delta_j]\right) + \sum_{j=0}^{i^*}
        f_j(\sigma^{p^{a_j}}-1)[\delta_j].
    \end{multline}

    Considering the coefficients of $[\alpha]$ in \eqref{eq:eigene1}, we have
    in $R_{i+1}G$
    \begin{equation*}
      0 = f_\alpha(\sigma-d).
    \end{equation*}
    By Lemma~\ref{le:phidbIII}, $f_\alpha=r_\alpha Q_d(n,0)$ for some $r_\alpha\in
    R_{i+1}G$.
    Now consider the coefficient of $[\delta_{i^*}]$ in \eqref{eq:eigene1}:
    \begin{equation*}
        p^{\tilde a_i-a_{i^*}+i^*}Q_d(a_{i^*},0)+r_{i^*} =
        -p^{i^*}f_\alpha + f_{i^*}(\sigma^{p^{a_{i^*}}}-1).
    \end{equation*}
    Substituting for $f_\alpha$, we have
    \begin{equation*}
        p^{\tilde a_i-a_{i^*}+i^*}Q_d(a_{i^*},0) + r_{i^*}=
        -p^{i^*}r_\alpha Q_d(n,0) + f_{i^*}(\sigma^{p^{a_{i^*}}}-1).
    \end{equation*}

    Let $\chi_{a_{i^*}}:R_{i+1} G\to R_{i+1} G_{a_{i^*}}$ be the
    natural $R_{i+1}$-homomorphism. Applying $\chi_{a_{i^*}}$ to the
    last equation and using Lemma~\ref{le:qhomoIII} again, we have in
    $R_{i+1}G_{a_{i^*}}$
    \begin{equation*}
        p^{\tilde a_i-a_{i^*}+i^*}
        Q_d(a_{i^*},0) + \chi_{a_{i^*}}(r_{i^*})=
        -p^{i^*}\chi_{a_{i^*}}(r_\alpha)p^{n-a_{i^*}}Q_d(a_{i^*},0).
    \end{equation*}
    Since $\tilde a_i \neq -\infty$ and $i<\nu$, we have $\tilde a_i<\tilde a_\nu$.  Proposition \ref{pr:aiversusnu} gives $\tilde a_\nu = n$, and so we have $\tilde
    a_i-a_{i^*}+i^*<n-a_{i^*}+i^*$.  Working modulo $p^{\tilde
    a_i-a_{i^*}+i^*+1}R_{i+1}G_{a_{i^*}}$, we have
    \begin{equation*}
        p^{\tilde a_i-a_{i^*}+i^*}Q_d(a_{i^*},0) = 0 \pmod{p^{\tilde
        a_i-a_{i^*}+i^*+1}R_{i+1}G_{a_{i^*}}}.
    \end{equation*}
    Recalling that $Q_d(a_{i^*},0)=P(a_{i^*},0)\pmod{p\Z
    G_{a_{i^*}}}$, we deduce
    \begin{equation*}
        p^{\tilde a_i-a_{i^*}+i^*}P(a_{i^*},0) = 0 \pmod{p^{\tilde
        a_i-a_{i^*}+i^*+1}R_{i+1}G_{a_{i^*}}}.
    \end{equation*}
    The constant term in $P(a_{i^*},0)$ is $1$, and so by comparing constant terms on both sides we have
      $p^{\tilde a_i-a_{i^*}+i^*}= 0 \in R_{i+1}$, whence $\tilde a_i-a_{i^*}+i^*\ge i+1$.
    Now by definition $\tilde a_i= a_{i^*}+(i-i^*)$, so
    \begin{equation*}
      i+1 \leq \tilde a_i - a_{i^*}+i^* = a_{i^*}+(i-i^*)-a_{i^*}+i^* = i,
    \end{equation*}
    a contradiction.
    Hence $X$ is not an $R_{i+1}H_{\tilde a_i}$-eigenmodule.

Now we turn to property $\mathcal{P}(H_{\tilde a_i})$.
    Taking a closer look, observe that we have shown that
    \begin{equation*}
        (\tau-1)[\alpha]= \sum_{j=0}^{i^*} p^{\tilde
        a_i-a_j+j}Q_d(a_j,0)[\delta_j] +r_j[\delta_j] \neq 0.
    \end{equation*}
    We claim that 
    $(\tau-1)X \subset pX$. 

    By Proposition \ref{pr:minimality.conditions}(\ref{it:exc.mod.indecom.condition...ai.vs.aj.inequalityIII}),
    $a_j<a_{i^*} \leq \tilde a_{i}$ for all $0\le j<i^*$. Because $(\sigma^{p^{a_j}}-1)\delta_j=0$ and
    \begin{equation*}
        (\sigma^{p^{a_j}}-1) P(\tilde a_{i},a_j) =
        (\sigma^{p^{\tilde a_{i}}}-1)=(\tau-1)
    \end{equation*}
    whenever $a_j \neq -\infty$, we deduce that $(\tau-1)$ annihilates $X^0=\langle
    [\delta_0],\dots,[\delta_i]\rangle$. Hence
    \begin{equation*}
        (\tau-1,p)X = \langle(\tau-1,p)\rangle[\alpha] +pX^0.
    \end{equation*}

    Proposition \ref{pr:minimality.conditions}(\ref{it:exc.mod.indecom.condition...ai.vs.aj.inequalityIII}) also gives $\tilde a_i-a_{i^*}+i^*<\tilde a_i-a_j+j$.  Hence
    \begin{equation*}
        (\tau-1)[\alpha]= p^{\tilde a_i-a_{i^*}+i^*}\left(
        Q_d(a_{i^*},0)[\delta_{i^*}] + p[z]\right) \neq 0.
    \end{equation*}
    for some $[z]\in X^0$.  Recalling that $i = \tilde a_i - a_{i^*} + i^*$, we have
    \begin{equation*}
        (\tau-1)X=(\tau-1) \langle [\alpha]\rangle\in p^{i}X \subseteq pX.
    \end{equation*}

    Let $$[w] = p^{-i}\sum_{j=0}^{i^*} p^{\tilde a_{i^*} - a_j + j}Q_d(a_{j},0)[\delta_{j}]+r_{j}[\delta_{j}].$$  We next claim that $[w],[\alpha]\not\in (\tau-1,p)X$.  Since $(\tau-1)X
    \subset pX$, it is enough to show that $[w],[\alpha]\not\in pX$.    
    Recall that $r_{j}\in p^{\tilde a_i-a_j+j+1}R_mG \subseteq pR_mG$ and  $Q_d(a_{i^*},0)=P(a_{i^*},0)\pmod{p\Z G}$.  Hence
    \begin{equation*}
        [w] = P(a_{i^*},0)[\delta_{i^*}] \pmod{pX}.
    \end{equation*}

    From the defining relations (\ref{eq:definition.of.Xadm}) we see that as an $\Fp$-module,
    $X/pX$ has an $\Fp$-base consisting of $\{[\alpha],\{\{\sigma^{j}[\delta_k]\}_{j=0}^{p^{a_k}-1}\}_{k=0}^{i}\}$.
    By definition of $P(a_{i^*},0)$, then, $[w]\not\in pX$.
    Hence $[w]\not\in (\tau-1,p)X$, and similarly $[\alpha]\not\in
    (\tau-1,p)X$.  Then we have solved the equation
    \begin{equation*}
        (\tau-1)[\alpha]=p^i[w]
    \end{equation*}
    with $[w],[\alpha] \not\in (\tau-1,p) X$.  Therefore $X$ satisfies property $\mathcal{P}(H_{\tilde a_i})$.

    Finally, we prove (\ref{it:eigen...is.trivial.module}). Let $H=H_{\tilde a_i+1}$, and for $\tau = \sigma^{p^{\tilde a_i+1}}$  note that $\langle \tau \rangle = H$.  We show that $X$
    is a trivial $R_{i+1}H$-module. 
    
    

	By definition, $\tilde
    a_i=a_{i^*}+(i-i^*)$. By Proposition \ref{pr:minimality.conditions}(\ref{it:exc.mod.indecom.condition...ai.vs.aj.inequalityIII}), $a_j\le a_{i^*}$ for all
    $0\le j\le i^*$.  Since $(\sigma^{p^{a_j}}-1)\delta_j=0$,
    we have that $\tau$ fixes every $[\delta_i]$ for $0\le j\le i$.  It will suffice to
    show that $[\alpha]$ is fixed by $\tau$ as well.

    Toward this end, in the same way that we established equation (\ref{eq:isXaneigenmodule}), one can show
    \begin{align*}
      (\tau-1)[\alpha] &= (\sigma^{p^{\tilde a_i+1}}-d^{p^{\tilde a_i+1}})[\alpha]
      = (\sigma-d)Q_d(\tilde a_i+1,0)[\alpha]  \\
      &= \sum_{j=0}^i p^jQ_d(\tilde
      a_i+1,0)[\delta_j] \\
      &= \sum_{j=0}^{i^*} p^{\tilde a_i-a_j+j+1}Q_d(a_j,0)[\delta_j]+u_j[\delta_j]
    \end{align*}
    for $u_j\in p^{\tilde a_i-a_j+j+2} R_{i+1}G$.
    If $0 \leq j <i^*$ satisfies $a_j \neq -\infty$, then by Proposition \ref{pr:minimality.conditions}(\ref{it:exc.mod.indecom.condition...ai.vs.aj.inequalityIII}) we have
    \begin{align*}
        \tilde
        a_i-a_j+j+1 &= a_{i^*}+(i-i^*)-a_j+j+1 \\
        &>(i-i^*)+(i^*-j)+j+1 \\ &=
        i+1.
    \end{align*}
    Hence
    \begin{align*}
        (\tau-1)[\alpha]=0
    \end{align*}
    and we have shown that $X$ is a trivial $R_{i+1}H$-module.
\end{proof}

\section{Proof of Theorem~{\ref{th:a.in.relation.to.norms}}}\label{se:proof1}


We first prove that if $m > i > \nu$, then $a_i = -\infty.$  This follows directly from Proposition \ref{pr:cyclopair} when $K/F$ is cyclotomic, so assume that $K/F$ is not cyclotomic.  Note that Proposition \ref{pr:aiversusnu} gives $a_{\nu^*} =n+\nu^*-\nu$. If $a_i \neq -\infty$, then Proposition \ref{pr:minimality.conditions}(\ref{it:exc.mod.indecom.condition...ai.vs.aj.inequalityIII}) gives $$a_i > i-\nu^*+a_{\nu^*} =i+n-\nu > n.$$  This is a clear contradiction.

Recall that for $0\le i<\nu$ we define 
\begin{equation*}
    b_i =
    \begin{cases}
        -\infty, & \xi_{p^{i+1}}\in N_{K/F(\xi_{p^{i+1}})}(K^\times)
        \\
        \min \{s : \xi_{p^{i+1}}\in N_{K/K_{s+1}}(K^\times)\}, &
        \text{\ otherwise}
    \end{cases}
\end{equation*}
and set $b_\nu=n$.  Our strategy will be to prove that for all $0 \leq i \leq \min\{\nu,m-1\}$ we have $\tilde a_i = b_i$.  Theorem \ref{th:a.in.relation.to.norms} for $0 \leq i \leq \min\{\nu,m-1\}$ then follows by applying Proposition \ref{pr:interp}.  

	We first settle the cases where $\nu = \infty$ and when $K/F$ is cyclotomic.  If $\nu = \infty$, then since we are not in the case $p=2$ and $\omega = 1 < \nu$, we also deduce that $\omega =\infty$.  Hence for all $i$ we have $F(\xi_{p^{i+1}}) = F$, and moreover $\xi_{p^{i+1}} = \left(\xi_{p^{i+n+1}}\right)^{p^n} = N_{K/F}\left(\xi_{p^{i+n+1}}\right)$ for an appropriately chosen $p^{n+i+1}$st root of unity $\xi_{p^{i+n+1}}$.  Hence $b_i = -\infty$ for all $0 \leq i \leq m$.  We have already seen that $a_i = -\infty$ for all $0 \leq i \leq m$ in Proposition \ref{pr:nuinfty}, and therefore $\tilde a_i = -\infty$ for all $0 \leq i \leq m$ as well.
	
	Now suppose that $K/F$ is cyclotomic.  Since we are not in the case $p=2$ and $\omega = 1 < \nu$, it must be the case that $\omega + n = \nu$, and moreover that $\omega \geq 2$ when $p=2$.  Lemma \ref{le:norm.of.root.of.unity} gives us $\xi_{p^{i+1}} \in N_{K/F(\xi_{p^{i+1}})}(K^\times)$, so that $b_i = -\infty$ for all $0 \leq i \leq \min\{\nu,m\}$. By Proposition \ref{pr:cyclopair} we have $a_i = -\infty$ for all $0 \leq i \leq \min\{\nu,m\}$, and hence $\tilde a_i = -\infty$ for all $0 \leq i \leq \min\{\nu,m\}$ as well.  

    So suppose $K/F$ is not cyclotomic and that $\nu\neq \infty$, and let $0 \leq i \leq \nu$.  By Lemma \ref{le:contracting.and.d.equivalence} we may assume without
    loss of generality that $m=i+1$. First consider $i=0$.  Then $\tilde a_0=a_0$ and the equality
    $\tilde a_0=b_0$ is \cite[Theorem 3]{MSS}.  In the case $i=\nu$, Proposition \ref{pr:aiversusnu} gives $a_{\nu^*} = n+\nu^*-\nu$, and so $$\tilde a_\nu = a_{\nu^*} + (\nu-\nu^*) = n = b_\nu.$$  
    
    Now consider $i$ with $1\le i<\nu$.  We will again denote an element $[\gamma]_m \in J_m$ by $[\gamma]$ for the course of this proof.   Let $(\alpha,\{\delta_j\}_{j=0}^{i+1})$ represent $(\mathbf{a},d)$, and let $X = \langle [\alpha],[\delta_0],[\delta_1],\cdots,[\delta_{i}]\rangle$. Recall that the relations satisfied by $X$ are generated by the equations in (\ref{eq:definition.of.Xadm}).  Write
    $H_l=\Gal(K/K_l)$ for $0\le l\le n$.


    We claim first that $b_i\le \tilde a_i$.

    Suppose first that $\tilde a_i=-\infty$.  Then $a_j=-\infty$ for all $0\le j\le i$, and from the defining relations (\ref{eq:definition.of.Xadm}) we have that $X$ is a cyclic
    $R_mG$-eigenmodule with eigenhomomorphism given by $\phi_d^{(i+1)}$. By
    \cite[Th.~1]{MSS2b}, $J_{i+1}$ decomposes into indecomposables as
    the sum of $X$ and cyclic $R_{i+1}G$-submodules free as
    $R_{i+1}(G/H_l)$-modules for $H_l\le G$.  Let $t$ be defined by
    $K_t=F(\xi_{p^{i+1}})$ and let $H=H_t$. Observe that $i+1\le
    t+\omega$. Now then $X$ is a cyclic $R_{i+1}H$-eigenmodule with
    eigenhomomorphism $\phi_d^{(i+1)}\vert_H$. Furthermore, any free $R_{i+1}(G/H_l)$-module decomposes into a sum of free $R_{i+1}(G/H\cap H_l)$-modules when considered as an $R_{i+1}H$-module.  Hence as an $R_{i+1}H$-module,
    $J_{i+1}$ is the sum of a cyclic $R_{i+1}H$-eigenmodule with
    eigenhomomorphism $\phi_d^{({i+1})}\vert_H$ and cyclic $R_{i+1}H$-modules free
    as $R_{i+1}(H/(H\cap H_l))$-modules for $H_l\le G$.

    Now suppose instead that $\tilde a_i\neq -\infty$. Let $t=\tilde
    a_i + 1$, and $H=H_t$.  By \cite[Th.~1]{MSS2b}, $J_{i+1}$
    decomposes into the sum of $X$ and cyclic $R_{i+1}G$-submodules free
    as $R_{i+1}(G/H_l)$-modules for $H_l\le G$. Moreover, from Proposition \ref{pr:eigen}(\ref{it:eigen...is.trivial.module}) we
    have that $X$ is a trivial $R_{i+1}H$-module, hence a direct sum of
    trivial cyclic $R_{i+1}H$-modules. 
    Hence as an $R_{i+1}H$-module, $J_{i+1}$
    decomposes into indecomposables as the sum of trivial
    cyclic $R_{i+1}H$-modules and cyclic $R_{i+1}H$-submodules free as
    $R_{i+1}(H/(H\cap H_l))$-modules for $H_l\le G$.

     By \cite[Th.~1]{MSS2b} for the
    extension $K/K_{t}$, $J_{i+1}$ --- considered as an $R_{i+1}H$-module ---
    contains as an indecomposable summand an $(i+1)$-exceptional module
    $\hat X$ defined by $$\hat X = \langle [\hat \alpha],[\hat \delta_0],[\hat \delta_1],\cdots,[\hat \delta_{i}]\rangle$$ where $(\hat\alpha,\{\hat\delta_j\})$ represents a minimal norm pair $(\hat{\mathbf{a}},\hat d)$ of length $i+1$  for $K/K_t$.  By the Krull-Schmidt-Azumaya Theorem (see \cite[Theorem~12.6]{AnFu}), the decomposition of $J_{i+1}$ as a
    $R_{i+1}H$-module is unique. Hence this
    indecomposable summand is one of the ones considered in the last
    two paragraphs.  In particular, $\hat X$ is a cyclic
    $R_{i+1}H$-module which is free as an $R_{i+1}(H/(H \cap H_l))$-module for some $H_l \leq G$. By the defining relations (\ref{eq:definition.of.Xadm}) for $\hat X$ (with $\tau$ in place of $\sigma$), the cyclicity of $\hat X$ implies $\hat X = \langle [\hat \alpha]\rangle$, and so $\hat \delta_j = 0$ for all $1 \leq j \leq i$.  
    
    We claim that additionally $\delta_0 =0$.  If $p>2$ this follows because otherwise $\hat X/p\hat X$ would be an $\F_pH$-module of dimension $p^{\hat a_0}+1$, an impossibility if $\hat X$ is free as an $R_{i+1}(H/(H \cap H_l))$-module. This same argument holds when $p=2$ unless $a_0 = 0$ and $H_l$ is index $2$ in $H$, so suppose these equalities hold.  Observe that in that case we have $(\tau-1)[\hat \alpha] + (1-\hat d)[\hat \alpha] = (\tau - \hat d)[\hat \alpha] = [\delta_0]$.  After applying $(\tau-1)$ to this relation and using the fact that $(\tau-1)^2 = -2(\tau-1) \in R_{i+1}(H/H\cap H_l)$, we recover $$-2(\tau-1) [\hat \alpha] + (1-\hat d)(\tau-1) [\hat \alpha] = (\tau-1)[\delta_0] = [0].$$ By the freeness of $\langle[\alpha]\rangle$ as an $R_{i+1}(H/H\cap H_l)$-module, we conclude that if $p=2$, $a_0 = 0$ and $H_l$ is index $2$ in $H$, then $-2 \equiv 1- \hat d \pmod{2^{i+1}}$.  In other words, in this case we have $\hat d = 1 + 2 + 2^{i+1}x$ for some $x \in \Z$.  We now show that this provides a contradiction, by establishing that in all cases (i.e., not only when $p=2$, $a_0=0$ and $H_l$ is index $2$ in $H$) we have $\hat d \in U_{i+1}$.  Once this is established, we will have shown that $\delta_j = 0$ for all $0 \leq j \leq i$.
    
    Suppose, for the sake of contradiction, that $\hat d\not\in U_{i+1}$.  If $\mu$ is chosen so that $\xi_{{p^{\mu}}} \in K_{t}$ but $\xi_{p^{\mu+1}}\not\in K_t$, then Lemma \ref{le:dinuomega} gives $\hat d\in U_{\min\{\mu,i+1\}}$.  By assumption, we must be in the case where $\mu < i+1$.  Note that because $\omega \leq \mu$, we further have $\xi_{p^{i+1}} \in K_{i+1-\omega}$.  Furthemore we must have $\mu<\nu$, since otherwise Theorem \ref{th:d.is.cyclotomic.character}(\ref{it:d.is.cyclo.character...omega.equals.nu}) gives $\hat d \equiv 1 \pmod{p^{i+1}}$. 

    Now if $\tilde a_i=-\infty$ then $K_t$ is defined as $F(\xi_{p^{i+1}})$.  By the definition of $\mu$ we have $\mu \geq i+1$, which is a contradiction.  So assume that $\tilde a_i\neq -\infty$,  and hence $\tilde a_i=a_{i^*}+(i-i^*)$.  Recall that
    $t=\tilde a_i+1$.  If $i^*< \omega$ then
    \begin{equation*}
        t = \tilde a_i+1=a_{i^*}+(i-i^*)+1>
        i+1-\omega.
    \end{equation*}
    This gives $\xi_{p^{i+1}} \in K_{i+1-\omega} \subseteq K_t$, and hence we have $\mu \geq i+1$, a contradiction. 
    
    Instead suppose
    $i^*\ge \omega$.  Since $\omega \leq \mu < \nu$ we have $d \in U_\omega \setminus U_{\omega+1}$ by Lemma \ref{le:dinuomega}. Then by Proposition \ref{pr:minimality.conditions}(\ref{it:exc.mod.indecom.condition...lower.bound.based.on.UIII}) we have
    $a_{i^*}> i^*-\omega$.  Therefore
    \begin{align*}
       t=\tilde a_i+1 = a_{i^*}+(i-i^*)+1 > i^*-\omega+(i-i^*)+1 \ge i+1 - \omega.
    \end{align*}
    As in the previous case, this provides a contradiction.  Hence it must be the case that $\hat d \in U_{i+1}$ in all cases, as desired.


    At this point we have shown  --- in all cases --- that $((-\infty,\cdots,-\infty),\hat d)$ is a minimal norm pair for $\hat d \in U_{i+1}$, with $(\hat \alpha,\{\hat\delta_j\})$ representing this norm pair. By definition we have
    \begin{equation*}
      \xi_{p} = \frac{\hat d-1}{p}N_{K/K_t}(\hat\alpha) +
      p^iN_{K/K_t}(\hat\delta_{i+1})
    \end{equation*}
    Since $\hat d\in U_{i+1}$, we may take $p^i$th roots in
    $K_{t}^\times$ to obtain that $\xi_{p^{i+1}}\in
    N_{K/K_{t}}(K^\times)$.  This gives $b_i<t$.  If $\tilde a_i\neq -\infty$ then $t=\tilde
    a_i+1$ and so $b_i\le \tilde a_i$.  If $\tilde a_i=-\infty$ then
    $K_t=F(\xi_{p^{i+1}})$, and so if $b_i<t$ then $b_i=-\infty$, by
    definition.  Hence $b_i\le \tilde a_i$ in all cases.
    
    Now we claim that $\tilde a_i\le b_i$.  
%
    For the sake of contradiction, suppose that $b_i<\tilde a_i$.

    First, suppose $b_i=-\infty$.  Let $t$ be defined as
    $K_t=F(\xi_{p^{i+1}})$ and set $H=H_t$.  Since $K/F$ is not
    cyclotomic, $t<n$. By the definition of $b_i$ we have $\xi_{p^{i+1}}\in N_{K/K_t}(K^\times)$,
    say $\xi_{p^{i+1}}=N_{K/K_t}(\hat \beta)$. Raising to $p^{i+1}$st
    powers and using Hilbert 90, we obtain $\hat \alpha$ with
    $(\sigma^{p^t}-1)\hat \alpha =p^{i+1}\hat \beta$.  Hence a minimal norm
    pair of length $i+1$ for the extension $K/K_t$ is
    $((-\infty,\dots,-\infty),1)$, and so the exceptional module $\hat X = \langle \hat \alpha \rangle$ for $J_{i+1}$ --- considered as an $R_{i+1}H$-module --- is a trivial cyclic module.  Moreover, by Lemma~\ref{le:inter} the generator
    $\hat \alpha$ is annihilated only by $p^\nu$, and since
    $m=i+1\le \nu$, the module is isomorphic to $R_m$.

    By \cite[{Th.~1}]{MSS2b}, $J_m$ --- considered as an $R_mH$-module --- contains
    as indecomposables $\hat X$ and cyclic $R_mH$-modules which are
    free as $R_m(H/S)$-modules for $S\le H$.  
    Now since $X$ is a direct summand of $J_m$ (considered as an $R_mG$-module), $X$ must decompose as an $R_mH$-module
    into a direct sum of these indecomposable $R_m H$-modules, so that $X$ is then a direct sum of free $R_m(H/S)$-modules for $S \leq H$.  Corollary \ref{cor:direct.sums.do.not.have.property.P} tells us that $X$ does not satisfy property $\mathcal{P}(H)$.

	Note that our assumption $b_i < \tilde a_i$ implies $\tilde a_i \geq 0$ in this case.  Hence by 
	Proposition~\ref{pr:eigen} and the comment following the definition of $\mathcal{P}(H)$, we have 
    $0 < \tilde a_i +1 \le t$. Observe that $t=i+1-\omega$, and furthermore that $t>0$ implies $\omega < i+1 \leq \nu$.  Since we know $\tilde a_i + 1 \leq t$, we have
    $\tilde a_i\le t-1=i-\omega$. Since 
    $\tilde a_i =a_{i^*}+(i-i^*)$,  we deduce
    that $a_{i^*}\le i^*-\omega$. Since $a_{i^*}\neq -\infty$, this gives
    $i^*\ge \omega$. Now because $\omega<\nu$, Lemma \ref{le:dinuomega} gives $d \in U_\omega \setminus U_{\omega +1}$, and then Proposition \ref{pr:minimality.conditions}(\ref{it:exc.mod.indecom.condition...lower.bound.based.on.UIII}) gives $a_{i^*}> i^*-\omega$, a contradiction.  

    Now consider $b_i\ge 0$, and let $t=\tilde a_i\ge b_i+1$ and
    $H=H_t$. Then $\xi_{p^{i+1}}\in N_{K/K_{t}}(K^\times)$, say
    $\xi_{p^{i+1}}=N_{K/K_{t}}\hat\delta$.  Raising both sides to the $p^{i}$th power gives
    \begin{equation*}
        \xi_p=p^{i}N_{K/K_{t}}\hat\delta.
    \end{equation*}
    Let $\tau=\sigma^{p^{t}}$. Taking $p$th powers and
    applying Hilbert's Theorem 90, there exists an element
    $\hat\alpha\in K^\times$ with $(\tau-1)\hat\alpha=
    p^{i+1}\hat\delta$.  Therefore $((-\infty,\dots,-\infty),1)$ is
    a norm pair (and therefore the minimal norm pair) of length $i+1$.

    By \cite[Th.~1]{MSS2b}, the $(i+1)$-exceptional module $\hat
    X$ for $K/K_{t}$ is a trivial cyclic $R_mH$-module.  By
    Lemma~\ref{le:inter}, since
    $m=i+1\le \nu$, the module is isomorphic to $R_m$. By
    Krull-Schmidt-Azumaya, each indecomposable summand of the
    $R_mH$-module $J_m$ in the decomposition of
    Theorem~1 is either $\hat X$ or a free
    $R_m(H/S)$-module for some $S\le H$. Hence $X$, as a direct
    summand of $J_m$, decomposes into a direct sum of such
    $R_mH$-indecomposables, and by Corollary \ref{cor:direct.sums.do.not.have.property.P} it follows that $X$ does not satisfy Property
    $\mathcal{P}(H)$. 
    This contradicts Proposition~\ref{pr:eigen}.

\end{document}